\documentclass[12pt,final]{amsart}

\usepackage{enumerate}
\usepackage[T2A,OT1]{fontenc}
\usepackage[utf8]{inputenc}
\usepackage{amssymb,amsfonts,amsmath,amsthm}
\usepackage{mathtools}
\usepackage{tikz}
\usepackage{tikz-cd}
\usepackage{mathrsfs}
\usepackage{microtype}
\usepackage[hidelinks]{hyperref}
\tikzset{node distance=2cm,auto}

\theoremstyle{remark}
\newtheorem{example}{Example}[section]
\newtheorem{remark}[example]{Remark}

\theoremstyle{definition}
\newtheorem{definition}[example]{Definition}
\theoremstyle{plain}
\newtheorem{proposition}[example]{Proposition}
\newtheorem{corollary}[example]{Corollary}

\newtheorem{theorem}[example]{Theorem}
\newtheorem{lemma}[example]{Lemma}

\newcommand{\bA}{\mathbf{A}}

\newcommand{\bQ}{\mathbf{Q}}
\newcommand{\bR}{\mathbf{R}}
\newcommand{\bZ}{\mathbf{Z}}
\newcommand{\fin}{\mathrm{fin}}

\newcommand{\scX}{\mathscr{X}}
\newcommand{\scY}{\mathscr{Y}}
\newcommand{\scZ}{\mathscr{Z}}

\newcommand{\scA}{\mathscr{A}}
\newcommand{\scP}{\mathscr{P}}
\newcommand{\scG}{\mathscr{G}}

\newcommand{\cX}{\mathcal{X}}
\newcommand{\cY}{\mathcal{Y}}
\newcommand{\cZ}{\mathcal{Z}}

\newcommand{\cO}{\mathcal{O}}
\newcommand{\widehatO}[1]{\widehat{\cO}_k^{\,#1}}

\newcommand{\GL}{\operatorname{GL}}
\newcommand{\SL}{\operatorname{SL}}
\newcommand{\PGL}{\operatorname{PGL}}
\newcommand{\Spec}{\operatorname{Spec}}

\newcommand{\Isom}{\operatorname{Isom}}
\newcommand{\Hom}{\operatorname{Hom}}
\newcommand{\Aut}{\operatorname{Aut}}
\newcommand{\Gal}{\operatorname{Gal}}

\newcommand{\Res}{\operatorname{Res}}

\newcommand{\Br}{\operatorname{Br}}
\newcommand{\Brun}{\operatorname{Br}_{\mathrm{un}}}
\newcommand{\Bre}{\operatorname{Br}_{e}}
\newcommand{\Pic}{\operatorname{Pic}}
\newcommand{\inv}{\operatorname{inv}}
\newcommand{\Ann}{\operatorname{Ann}}
\newcommand{\ev}{\operatorname{ev}}
\newcommand{\Gm}{\mathbf{G}_m}
\DeclareRobustCommand{\Sha}{%
  \mathord{\text{{\fontencoding{T2A}\selectfont Ш}}}%
}
\newcommand{\lcm}{\operatorname{lcm}}

\title{Adelic Points and Unramified Brauer Approximation for Classifying Stacks}
\author{Ajneet Dhillon}
\email{adhill3@uwo.ca}
\subjclass[2020]{Primary 14G12;  11R34, 20G30}
\keywords{Adelic points, classifying stacks, strong approximation, Brauer--Manin obstruction, unramified Brauer group, linear algebraic groups}
\hypersetup{
  pdftitle={Adelic Points and Unramified Brauer Approximation for Classifying Stacks},
  pdfauthor={Ajneet Dhillon},
  pdfsubject={Adelic points, classifying stacks, Brauer--Manin obstruction, and strong approximation}
}

\begin{document}

\begin{abstract}
Let $k$ be a number field and let $G$ be a connected linear algebraic group
over $k$.  We compare three approaches to strong approximation for the
classifying stack $BG$ with respect to its full Brauer group: the
homogeneous-space method of \cite{DhillonClassifying}, Kottwitz's
local--global sequence, and, for reductive groups, Borovoi's localization
theorem.  Under the natural identification
\[
 \Br(BG)/\Br(k)\simeq\Pic(G),
\]
we identify the Kottwitz and Borovoi obstruction maps with Brauer evaluation.
The three approaches therefore give the same exact description of the
localization image as the projected full Brauer--Manin set.

We then study strong approximation with respect to the ordinary unramified
Brauer group.  We prove
\[
 \Brun(BG)/\Br(k)\simeq
 \Sha^1_{\mathrm{cyc}}(k,\widehat G),
 \qquad \widehat G=X^*(G_{\bar{k}}),
\]
and give an exact local criterion for unramified Brauer approximation off a
finite set of places.  Examples show that this approximation can fail even
when a finite place is omitted, and exhibit a torus for which the unramified
Brauer group cuts out the global image as a proper subset of the adelic space.
\end{abstract}

\maketitle

\section{Introduction}

Let $k$ be a number field and let $G$ be a connected linear algebraic group
over $k$.  For every field extension $F/k$, the set of isomorphism classes of
$F$-points of the classifying stack $BG$ is
\[
 |BG(F)|=H^1(F,G).
\]
Thus approximation on $BG$ asks which finite-support families of local
$G$-torsors arise from a global torsor.  For a finite set of places
$S\subset\Omega_k$, the relevant localization map is
\[
 H^1(k,G)\longrightarrow
 \prod\nolimits_{v\notin S}'H^1(k_v,G).
\]
The adelic topology is especially simple in this setting.  The product formula
of Section~\ref{sec:product-formula} identifies $BG(\bA_k^S)$ with the
restricted pointed product on the right, and
Theorem~\ref{thm:BG-adelic-discrete} shows that this space is discrete.
Consequently, strong approximation for $BG$ is an exact surjectivity problem,
not merely a density problem.

The first theme of the paper is the comparison of three approaches to this
localization problem.  In \cite{DhillonClassifying}, strong approximation for
$BG$ with respect to the full Brauer group is approached geometrically: one
chooses a faithful embedding $G\hookrightarrow H:=\SL(V)$, writes
\[
 BG\simeq[(H/G)/H],
\]
and applies strong approximation with Brauer--Manin obstruction to the
homogeneous space $H/G$.  We show that the same argument works for every
non-empty finite $S$; in particular, $S$ need not contain all archimedean
places or a finite place.  Discreteness then upgrades the resulting density
statement to an exact description of the localization image.

A second description is supplied by Kottwitz's local--global sequence
\cite{Kottwitz,ColliotTheleneFlasque,ColliotTheleneXu}, while for connected
reductive groups a third is given by Borovoi's theorem on the image of
localization \cite{BorovoiLocalization}.  We make explicit that all three use
the same obstruction.  The bridge is the natural identification
\begin{equation}\label{eq:intro-Pic-Brauer}
 \Br(BG)/\Br(k)\simeq\Bre(BG)\simeq\Pic(G).
\end{equation}
If $P_G:=\Pic(G)$ and $b_L\in\Bre(BG)$ corresponds to $L\in P_G$, define
\[
 \ev_v(\xi_v)(L):=\inv_v\bigl(b_L(\xi_v)\bigr),
 \qquad
 E_v:=\operatorname{im}(\ev_v)\subseteq P_G^\vee,
 \qquad
 E_S:=\sum_{v\in S}E_v.
\]
For an adelic family away from $S$, put
\[
 \Lambda_{G,S}\bigl((\xi_v)_{v\notin S}\bigr)
 :=\sum_{v\notin S}\ev_v(\xi_v).
\]
Kottwitz's obstruction map is precisely this Brauer evaluation map, and we
obtain
\begin{equation}\label{eq:intro-full-image}
 \operatorname{im}\bigl(BG(k)\longrightarrow BG(\bA_k^S)\bigr)
 =\Lambda_{G,S}^{-1}(E_S)
 =BG(\bA_k^S)^{\Br(BG)}.
\end{equation}
For reductive $G$, Tate--Kottwitz duality identifies Borovoi's target
$(\pi_1(G)_\Gamma)_{\mathrm{tors}}$ with $P_G^\vee$, and under this
identification his local maps agree with the maps $\ev_v$.  Thus the
homogeneous-space theorem of \cite{DhillonClassifying}, Kottwitz's exact
sequence, and Borovoi's localization theorem give equivalent descriptions of
the full Brauer--Manin obstruction for $BG$.  The raw image theorem in the
reductive case is Borovoi's; our purpose here is to identify it with the
geometric and Brauer--Manin formulations.

The second theme is approximation with respect to the ordinary unramified
Brauer group.  Put $\widehat G:=X^*(G_{\bar{k}})$.  Using the presentation by
$H/G$, purity on a smooth compactification, and the calculation of
Colliot--Th\'el\`ene--Xu, we prove
\begin{equation}\label{eq:intro-Brun-computation}
 \Brun(BG)/\Br(k)
 \simeq\Sha^1_{\mathrm{cyc}}(k,\widehat G).
\end{equation}
Let $U_G\subseteq P_G$ denote the subgroup corresponding to the right-hand
side.  The projected ordinary unramified Brauer--Manin set is
\begin{equation}\label{eq:intro-unramified-set}
 BG(\bA_k^S)^{\Brun(BG)}
 =\Lambda_{G,S}^{-1}\bigl(E_S+\Ann(U_G)\bigr).
\end{equation}
Comparing \eqref{eq:intro-unramified-set} with
\eqref{eq:intro-full-image} gives the exact criterion
\begin{equation}\label{eq:intro-unramified-criterion}
 BG(k)\twoheadrightarrow BG(\bA_k^S)^{\Brun(BG)}
 \quad\Longleftrightarrow\quad
 \Ann(U_G)\subseteq E_S.
\end{equation}
In other words, ordinary unramified Brauer approximation holds precisely when
local torsors at the omitted places can cancel every character of $\Pic(G)$
that vanishes on the unramified subgroup.

  If $v$ is
non-archimedean and
\[
 r_v:\Pic(G)\longrightarrow\Pic(G_{k_v}),
\]
then local Kottwitz duality gives $E_v=\Ann(\ker r_v)$.  Consequently,
\[
 \bigcap_{\substack{v\in S\\v\text{ finite}}}\ker r_v\subseteq U_G
\]
is a sufficient condition for \eqref{eq:intro-unramified-criterion}; it is
also necessary when the archimedean places in $S$ have trivial evaluation
image.  The issue is therefore not merely whether $S$ contains a finite place,
but whether the places in $S$ detect enough of $\Pic(G)$.

The examples exhibit the different possibilities.  Since
$\Brun(B\PGL_3)=\Br(\bQ)$, ordinary unramified Brauer approximation for
$B\PGL_3$ over $\bQ$ off the real place is just plain approximation, and it
fails.  For the quadratic norm-one torus
\[
 T=\Res^1_{\bQ(\sqrt2)/\bQ}\Gm,
\]
unramified Brauer approximation fails off $S=\{\infty,7\}$ even though $S$
contains a finite place: the prime $7$ splits and contributes no local
correction.  Adding the inert prime $3$ repairs the failure.

There is also a genuinely intermediate example.  For
\[
 L=\bQ(\sqrt2,\sqrt3),
 \qquad
 T=\Res_{L/\bQ}\Gm/\Gm,
\]
we compute
\[
 \Pic(T)\simeq\bZ/4\bZ,
 \qquad
 \Sha^1_{\mathrm{cyc}}(\bQ,\widehat T)\simeq\bZ/2\bZ,
\]
and hence
\[
 \Br(\bQ)\subsetneq\Brun(BT)\subsetneq\Br(BT).
\]
Off $S_0=\{\infty,23\}$ ordinary unramified Brauer approximation fails; off
$S_1=\{\infty,5\}$ it holds but plain approximation fails; and off
$S_2=\{\infty,3\}$ plain approximation holds.  In the middle case the
unramified Brauer--Manin set is therefore the exact global image and is a
proper subset of the adelic space.

Two technical inputs support these arithmetic results.  First, we develop the
adelic framework for a class of adelically liftable Artin stacks.  Building on
the local topologies of Moret--Bailly and \v{C}esnavi\v{c}ius and the adelic
product formulas of Conrad and \v{C}esnavi\v{c}ius
\cite{MoretBailly,Cesnavicius,Conrad,CesnaviciusPoitouTate}, we prove
\[
 \cX(\bA_k)\simeq
 \varinjlim_T\left(
  \prod_{v\in T}\cX(k_v)\times
  \prod_{v\notin T}\scX(\cO_v)
 \right).
\]
We compare the restricted-product topology with the quotient topology induced
by a lifting presentation and prove functoriality.  Applied to classifying
stacks, this framework gives the discreteness used above.

Second, we isolate an evaluation-generation form of Harari's formal lemma.
For a finite subgroup $B\subseteq\Br(\cX)$, local evaluation characters away
from a sufficiently large finite set generate exactly the annihilator of
$B\cap\Brun(\cX)$.  The ramification input comes from Harari for varieties and
from Loughran--Santens for smooth Deligne--Mumford stacks
\cite{Harari,LoughranSantens}; Brauer-detecting adelic atlases transfer it to
suitable Artin quotient stacks.  This evaluation-generation statement is used
in the necessity direction of \eqref{eq:intro-unramified-criterion}.  We also
prove a formal modification theorem and a boundary-residue description of the
unramified subgroup.  For $BG$, however, modifying a component outside $S$
changes the isolated point one is trying to globalize.  The exact criterion
therefore requires corrections at the omitted places themselves.

Recent work \cite{DhillonEtale} proves that the \'{e}tale Brauer--Manin
obstruction is the only obstruction to strong approximation for classifying
stacks of arbitrary linear algebraic groups.  That theorem permits
disconnected groups and uses the \'{e}tale obstruction.  The present paper
focuses instead on connected groups, the ordinary unramified Brauer group, and
the exact local criterion governing when this smaller obstruction suffices.

The paper is organized as follows.  Section~2 proves the adelic product
formula, compares the two adelic topologies, establishes functoriality, and
deduces discreteness for classifying stacks.  Section~3 formulates strong
approximation, proves the full-Brauer theorem by the homogeneous-space method,
and treats $B\PGL_n$ and cyclic norm-one tori.  Section~4 proves
evaluation-generation and formal modification, introduces Brauer-detecting
adelic atlases, and gives the boundary-residue formula.  Section~5 computes
the full and ordinary unramified Brauer groups of $BG$, compares the
Kottwitz, Borovoi, and Brauer--Manin localization maps, proves the exact
unramified criterion, and develops the examples above.

\noindent\textbf{Acknowledgements.}
I thank Azur {\DJ}onlagi\'c for observing that the hypothesis on the omitted
places in \cite[Theorem~5.5]{DhillonClassifying} can be substantially weakened
using the homogeneous-space theorem of Borovoi--Demarche, and for directing me
to \cite{Cesnavicius}; Daniel Loughran for suggesting the study of the
unramified Brauer group of algebraic stacks; and Patrick Brosnan for pointing
out the relevance of R.~Kottwitz's work to this problem.

\section*{Notation}
\label{sec:notation}

\begin{tabular}{p{0.18\textwidth}p{0.74\textwidth}}
    $k$  & A number field \\
    $\bA_k$ & The adele ring of $k$\\ 
    $\bA_k^S$ & The adeles away from a finite set of places $S$, equivalently the image of $\bA_k$ under projection to the factors indexed by $v\notin S$\\
    $\Omega_k$ & The set of places of $k$ \\
    $\Omega_k^\infty$ & The set of infinite places of $k$ \\ 
    $\Omega_k^\fin$ & The set of finite places of $k$ \\
    $\cO_v$ & The valuation ring for $v\in\Omega_k^\fin$ \\ 
    $\cO_{k,S}$ & The ring of $S$-integers. Here $S\subseteq \Omega_k$ is a finite set. We define \\ 
    & $\{x\in k \mid x\in\cO_v\ \forall\ v\notin S\}$ \\
    $k_T$  & For a finite set $T\subset \Omega_k$ this is defined to be the product $\prod_{v\in T}k_v$.\\
    $\widehatO{T}$ & For a finite set $T\supset \Omega_k^\infty$, the integral adelic ring $\prod_{v\notin T}\cO_v\subset \bA_k^T$ \\
    $\bA_k(T)$ & For a finite set $T\supset \Omega_k^\infty$, the ring $\prod_{v\in T}k_v\times \widehatO{T}$ \\
    $|\cX(R)|$ & The set $\pi_0(\cX(R))$ of isomorphism classes of $R$-points \\ 
    $X\times^G H$ & If $G$ acts on the left on $X$ and $G\hookrightarrow H$, the contracted product $(X\times H)/G$, where $(x,h)\cdot g=(g^{-1}x,hg)$; it carries the residual left $H$-action. \\ 
\end{tabular}

We use ordinary capital letters such as $X,Y,Z$ for algebraic spaces, schemes, or varieties.

Algebraic stacks over $k$ are denoted by calligraphic letters such as $\cX,\cY,\cZ$.

Integral models of stacks, spaces, schemes, and varieties are denoted by script letters such as $\scX,\scY,\scZ$.

Whenever groupoid structure matters, $\cX(R)$ denotes the groupoid of
$R$-points and $|\cX(R)|$ its set of isomorphism classes.  In statements about
approximation, topology, or Brauer--Manin sets, including those in the
Introduction and Sections~3--5, point spaces mean sets of isomorphism classes
and we suppress the vertical bars.  Categorical equivalences in the
Introduction and Section~2 retain the groupoid-valued meaning.
 
\section{The product formula for algebraic stacks}\label{sec:product-formula}

Throughout this section, $S\subset \Omega_k$ denotes a finite set containing
$\Omega_k^\infty$.  If $T\supset S$ is finite, set
\[
 \widehatO{T}:=\prod_{v\notin T}\cO_v,
 \qquad
 \bA_k(T):=\prod_{v\in T} k_v\times \widehatO{T}.
\]
Thus $\bA_k=\varinjlim_{T\supset S}\bA_k(T)$.  If $\cZ$ is a stack over a ring
$R$, we write
\[
 |\cZ(R)|:=\pi_0(\cZ(R))
\]
for the set of isomorphism classes of $R$-points.

We shall use the following from \cite{Conrad}.

\begin{proposition}\label{prop:conrad-product}
Let $\scZ$ be a separated algebraic space of finite presentation over
$\cO_{k,S}$, with generic fibre $Z$.  Then the natural map
\[
 Z(\bA_k)\longrightarrow
 \varinjlim_{T\supset S}
 \left(
   \prod_{v\in T} Z(k_v)\times
   \prod_{v\notin T}\scZ(\cO_v)
 \right)
\]
is a bijection.  Moreover, for each finite $T\supset S$, the natural map
\[
 \scZ(\widehatO{T})\longrightarrow \prod_{v\notin T}\scZ(\cO_v)
\]
is a bijection.

More generally, fix a finite set $T\supset S$, put $R=\widehatO{T}$, and let
$W$ be a separated algebraic space of finite presentation over $R$.  Then the
natural map
\[
 W(R)\longrightarrow
 \prod_{v\notin T}W_{\cO_v}(\cO_v)
\]
is a bijection.
\end{proposition}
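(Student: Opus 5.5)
The plan is to prove the last, most general assertion first and then deduce the other two from it.

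\emph{Step 1: reduction to the affine case.} Since $W$ is quasi-compact and quasi-separated, I take an étale surjection $U\to W$ with $U$ an affine scheme of finite presentation over $R$. For affine $W=\Spec A$ with $A$ finitely presented, choose a presentation $A=R[x_1,\dots,x_n]/(f_1,\dots,f_m)$. Then
\[
W(R)=\{x\in R^n : f_j(x)=0 \text{ for all } j\}.
\]
Because $R=\prod_{v\notin T}\cO_v$ is a product ring, polynomial identities may be checked componentwise. Hence $W(R)=\prod_{v\notin T} W_{\cO_v}(\cO_v)$ in the affine case.

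\emph{Step 2: passage to algebraic spaces.} I use the structure of $R$. It is a product of local rings, so every finitely presented $R$-algebra decomposes accordingly on idempotents. Moreover, the family $\{\Spec\cO_v\}_{v\notin T}$ is Zariski-dense in $\Spec R$, although it is not a cover.

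\emph{Injectivity} follows from separatedness. Two maps $\Spec R\to W$ agreeing at every $\cO_v$ define a section of the closed immersion given by the equaliser, which is finitely presented. By the affine case, that section is determined componentwise, so the two maps coincide.

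\emph{Surjectivity} is the main obstacle. Given points $w_v\in W(\cO_v)$, each $w_v$ lifts to $U$ only étale-locally, i.e.\ after a finite étale extension of $\cO_v$, and these extensions are not uniformly bounded.

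My first attempt is to follow Conrad. Use the standard fact that a quasi-separated algebraic space over a henselian local ring $\cO_v$ has its $\cO_v$-points lifting Nisnevich-locally, i.e.\ to an étale neighbourhood with trivial residue extension. Since $\cO_v$ is henselian, the point $w_v$ then lifts to $U(\cO_v)$ itself. So one may choose lifts $u_v\in U(\cO_v)$, and the affine case glues them to a point $u\in U(R)$. Its image in $W(R)$ restricts to $w_v$ for every $v$, which gives surjectivity.

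The delicate point is to use a single $U$, quasi-compact over $R$. A further difficulty is that $W$ need not have a Nisnevich atlas in general. Here I invoke that, for quasi-separated algebraic spaces, there is an étale atlas that is a Nisnevich cover after base change to henselian local rings. Alternatively, one can argue via an stratification of $W$ into schemes (a Noetherian induction over $R$ after spreading out). I expect citing Conrad for this step is acceptable.

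\emph{Step 3: the first two assertions.} The second assertion is the case $W=\scZ_R$ of Step~2. For the first, $\bA_k=\varinjlim_T\bA_k(T)$ and $Z$ is of finite presentation, so
\[
Z(\bA_k)=\varinjlim_T Z(\bA_k(T)).
\]
Since $\bA_k(T)$ is a finite product with $\widehatO{T}$, I obtain
\[
Z(\bA_k(T))=\prod_{v\in T} Z(k_v)\times\scZ(\widehatO{T}).
\]
Here the factor over $\widehatO{T}$ is computed from the integral model, which is valid because $\widehatO{T}$ is an $\cO_{k,S}$-algebra. Applying the second assertion to this factor and passing to the colimit gives the bijection. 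The transition maps are compatible because of the generic-fibre identification $\scZ(\cO_v)\subset Z(k_v)$, which is injective by separatedness.
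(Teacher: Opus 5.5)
Your outline is correct, but at the algebraic-space step it takes a different route from the paper. The paper argues entirely by citation: Conrad for the first two bijections, his product-ring argument for the scheme case of the third, and the étale descent in the proof of his Proposition~5.2 to reach algebraic spaces. Your affine case, your injectivity argument (which works verbatim over any product ring) and your Step~3 are fine, with two cosmetic corrections. First, $\bA_k(T)$ is not a $k$-algebra, so write $Z(\bA_k)=\scZ(\bA_k)=\varinjlim_T\scZ(\bA_k(T))$ rather than $Z(\bA_k(T))$. Second, the claim that every finitely presented $R$-algebra ``decomposes accordingly on idempotents'' is false, since $R[x]\neq\prod_v\cO_v[x]$; you never use it.

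The step you flag as delicate needs a precise input. A fixed affine étale atlas $U\to W$ does not lift $\cO_v$-points in general: take $W=\Spec R$ and $U=\Spec\prod_v\cO'_v$ with $\cO'_v/\cO_v$ unramified quadratic. What your argument needs is a single affine étale $V\to W$ of finite presentation with $V(F)\to W(F)$ surjective for every field $F$. Then Hensel's lemma lifts each $w_v$ to $V(\cO_v)$, and the affine case glues the lifts.

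You say $W$ ``need not have a Nisnevich atlas'', but your $W$, being quasi-compact and quasi-separated, does have one. The standard stratification lemma (Stacks Project, chapter on decent spaces) gives open subspaces $\emptyset=U_{n+1}\subset\cdots\subset U_1=W$ and quasi-compact étale schemes $V_p\to U_p$ that are isomorphisms over the reduced strata $U_p\setminus U_{p+1}$. An $F$-point factors through the stratum containing its image, so it lifts. You may take $V$ to be a disjoint union of finitely many affine opens of the $V_p$. With this citation in place your proof is complete.

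Your claim that the étale extensions needed to lift along a fixed $U$ are not uniformly bounded is wrong. The map $U\to W$ is quasi-finite of finite presentation, so its fibre degrees are bounded by some $N$. That bound is what makes a descent argument of the kind the paper invokes work:
\begin{itemize}
\item take $\cO'_v/\cO_v$ unramified of degree $N!$, so that $R'=\prod_v\cO'_v=R[x]/(f)$ for a monic $f$ with unit discriminant is finite étale over $R$;
\item lift each $w_v$ to $U(\cO'_v)$ by Hensel, and glue the lifts in $U(R')$;
\item descend the image in $W(R')$ to $W(R)$, checking the cocycle condition on the product ring $R'\otimes_R R'=\prod_v(\cO'_v\otimes_{\cO_v}\cO'_v)$ by your injectivity argument.
\end{itemize}
The descent route needs only this degree bound and the sheaf property of $W$. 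Yours replaces descent by the stratification lemma, and for schemes it reduces to the familiar finite affine-cover argument.
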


\begin{proof}
The first two bijections follow from
\cite[Theorems~3.4 and~3.6, Proposition~5.2 and its proof]{Conrad}.
For the last, the product-ring argument
\cite[equation~(3.6.2) and its proof]{Conrad} gives the scheme case, and the
\'{e}tale descent used in the proof of \cite[Proposition~5.2]{Conrad} extends it
to separated algebraic spaces of finite presentation.
\end{proof}

\begin{definition}\label{def:integral-model-stack}
Let $\cX$ be an Artin stack of finite type over $k$.  An \emph{integral model}
of $\cX$ over $\cO_{k,S}$ is an Artin stack
\[
 \scX\longrightarrow \Spec \cO_{k,S}
\]
of finite presentation together with an identification $\scX_k\simeq \cX$.
For $v\notin S$, a point $x_v\in \cX(k_v)$ is \emph{integral with respect to
$\scX$} if it lies in the essential image of
\[
 \scX(\cO_v)\longrightarrow \cX(k_v).
\]
\end{definition}

Given an integral model $\scX/\cO_{k,S}$ and a finite set $T\supset S$, put
\[
 \cX_T:=
 \prod_{v\in T}\cX(k_v)\times
 \prod_{v\notin T}\scX(\cO_v).
\]
The \emph{restricted product groupoid} is
\[
 \prod\nolimits_v'\cX(k_v):=\varinjlim_{T\supset S}\cX_T.
\]
Thus its objects are tuples $(x_v)_v$, with $x_v\in \cX(k_v)$, which are
integral for all but finitely many finite places; morphisms are tuples of local
isomorphisms which are integral for all but finitely many finite places.

\begin{definition}\label{def:adelically-liftable}
Let $\scX$ be an Artin stack of finite presentation over $\cO_{k,S}$.  We say
that $\scX$ is \emph{adelically liftable} if the following hold.
\begin{enumerate}[(i)]
\item The diagonal
\[ 
 \Delta_{\scX/\cO_{k,S}}:\scX\longrightarrow
 \scX\times_{\cO_{k,S}}\scX
\]
is representable by separated algebraic spaces of finite presentation.
\item There is a smooth surjective presentation
\[
 p:U\longrightarrow \scX
\]
where $U$ is a separated algebraic space of finite presentation over
$\cO_{k,S}$.
\item For every finite place $v\notin S$, the functor
\[
 U(\cO_v)\longrightarrow \scX(\cO_v)
\]
is essentially surjective.
\end{enumerate}
Such a presentation $p$ will be called an \emph{adelic lifting presentation}.
We say that a finite type Artin stack over $k$ is adelically liftable if it
admits an adelically liftable integral model after enlarging $S$.
\end{definition}

Such presentations exist for a broad class of algebraic stacks, see \ref{prop:quotient-stacks-liftable}.

It is worth recalling that $S$ is assumed to be a finite set. This point is essential, particularly when applying the 
third property above.

\begin{theorem}\label{thm:stack-product-formula}
Let $\cX$ be a finite type Artin stack over $k$, and let
$\scX/\cO_{k,S}$ be an adelically liftable integral model with generic fibre
$\cX$.  Then there is a natural equivalence of groupoids
\[
 \cX(\bA_k)\simeq
 \varinjlim_{T\supset S}
 \left(
   \prod_{v\in T}\cX(k_v)\times
   \prod_{v\notin T}\scX(\cO_v)
 \right).
\]
Equivalently,
\[
 \cX(\bA_k)\simeq \prod\nolimits_v'\cX(k_v),
\]
where the restricted product is taken with respect to the integral subgroupoids
$\scX(\cO_v)\subset \cX(k_v)$.  Passing to isomorphism classes gives a natural
bijection
\[
 |\cX(\bA_k)|\cong
 \left\{(x_v)_v:\ x_v\in |\cX(k_v)|,
 \ x_v\text{ is integral for almost all finite }v\right\}.
\]
\end{theorem}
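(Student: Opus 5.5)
The plan is to prove the equivalence in three steps: first over the rings $\bA_k(T)$, then over the full product $\widehatO{T}$, and finally by passing to the colimit in $T$, using $\bA_k=\varinjlim_T\bA_k(T)$.

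\emph{Step 1 (limit preservation).} Since $\scX$ has finite presentation over $\cO_{k,S}$, its functor of points commutes with filtered colimits of rings. Hence
\[
\scX(\bA_k)\simeq\varinjlim_{T\supset S}\scX(\bA_k(T)).
\]
Here $\bA_k(T)$ is an $\cO_{k,S}$-algebra, and the points of the stack $\cX=\scX_k$ over a $k$-algebra are the points of $\scX$ over that algebra.

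\emph{Step 2 (finite product decomposition).} Since $\bA_k(T)=k_T\times\widehatO{T}$ is a finite product of rings, and $\Spec$ of a finite product is a disjoint union, the stack property gives
\[
\scX(\bA_k(T))\simeq\prod_{v\in T}\cX(k_v)\times\scX(\widehatO{T}).
\]

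\emph{Step 3 (the main obstacle).} We must show that the natural functor
\[
\Phi\colon\scX(\widehatO{T})\longrightarrow\prod_{v\notin T}\scX(\cO_v)
\]
is an equivalence. Set $R=\widehatO{T}$.

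\textbf{Full faithfulness.} Given $x,y\in\scX(R)$, the sheaf $\Isom(x,y)=R\times_{\scX\times\scX}\scX$ is, by condition (i), a separated algebraic space of finite presentation over $R$. Its $R$-points are the morphisms $x\to y$. The last part of Proposition~\ref{prop:conrad-product} identifies $\Isom(x,y)(R)$ with $\prod_{v\notin T}\Isom(x,y)(\cO_v)$, which is exactly the statement that $\Phi$ is bijective on morphisms.

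\textbf{Essential surjectivity.} Take $(x_v)_{v\notin T}$. By condition (iii), each $x_v$ lifts to some $u_v\in U(\cO_v)$ with $p(u_v)\simeq x_v$. By the second bijection in Proposition~\ref{prop:conrad-product}, applied to $U$ (which is separated of finite presentation over $\cO_{k,S}$), the family $(u_v)$ comes from a single $u\in U(R)$. Then $p(u)\in\scX(R)$ restricts to objects isomorphic to the $x_v$, so $\Phi$ is essentially surjective.

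The finiteness of $S$ and $T$ matters here. It is what makes $R$ an $\cO_{k,S}$-algebra to which Conrad's product result applies, and it is what allows condition (iii) to be used at every $v\notin T$ simultaneously.

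\emph{Step 4 (assembling).} Combining Steps 1–3 gives equivalences
\[
\scX(\bA_k(T))\simeq\cX_T
\]
that are compatible with the transition functors for $T\subset T'$. These transition functors are induced by $\bA_k(T)\to\bA_k(T')$, and on the product side they are the inclusions $\scX(\cO_v)\to\cX(k_v)$ for $v\in T'\setminus T$. Passing to the filtered colimit yields the stated equivalence $\cX(\bA_k)\simeq\prod'_v\cX(k_v)$.

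Finally, we take $\pi_0$. Isomorphism classes commute with filtered colimits of groupoids. An object of $\cX_T$ gives, for each $v\notin T$, a class in $|\cX(k_v)|$ that is integral in the sense of Definition~\ref{def:integral-model-stack}. Conversely, any family of classes in $|\cX(k_v)|$ that is integral for almost all $v$ can be represented by a tuple lying in some $\cX_T$. This gives the displayed bijection.

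Naturality, i.e.\ independence of the chosen presentation, follows because $\Phi$ itself does not involve $p$; the presentation is used only to prove that $\Phi$ is essentially surjective.
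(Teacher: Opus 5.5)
The argument for the groupoid equivalence is the paper's, reordered. The key step is the equivalence $\scX(\widehatO{T})\simeq\prod_{v\notin T}\scX(\cO_v)$, and you prove it exactly as the paper does. For essential surjectivity you lift through $U$ and glue with Conrad's bijection; for full faithfulness you use $\Isom(x,y)$ and the last assertion of Proposition~\ref{prop:conrad-product}. You then apply the finite product decomposition of $\bA_k(T)$ and limit preservation; the only change is that you do the colimit step first. That part is fine.

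The gap is in your last paragraph, the passage to isomorphism classes. Commuting $\pi_0$ with products and filtered colimits gives
\[
 |\cX(\bA_k)|\cong\varinjlim_{T\supset S}\Bigl(\prod_{v\in T}|\cX(k_v)|\times\prod_{v\notin T}|\scX(\cO_v)|\Bigr).
\]
Its integral factors are $|\scX(\cO_v)|$, not their images in $|\cX(k_v)|$. You check that the map to families of classes in $|\cX(k_v)|$ is well defined and surjective, but you never check injectivity. Two objects that are isomorphic in every $\cX(k_v)$ are isomorphic in the colimit only if they are isomorphic in $\scX(\cO_v)$ for almost all $v$. This needs $|\scX(\cO_v)|\to|\cX(k_v)|$ to be injective for almost all $v$, and that can fail for adelically liftable stacks.

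For example, take $\scX=[\bA^1/\Gm]$, which is adelically liftable by Proposition~\ref{prop:quotient-stacks-liftable}. Consider two objects of $\cX_S$: the first is $1$ at every place; the second is $1$ at $v\in S$ and a uniformizer $\pi_v$ at every $v\notin S$. They are isomorphic in each $\cX(k_v)$. An isomorphism in the colimit, however, would require units $u_v\in\cO_v^\times$ with $u_v\pi_v=1$ for almost all $v$, which is impossible.

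So the displayed bijection needs an extra hypothesis, such as injectivity of $|\scX(\cO_v)|\to|\cX(k_v)|$ for almost all $v$. This holds, for instance, for $B\scG$ with $\scG$ connected, where $H^1(\cO_v,\scG)=1$. The paper's proof is silent on this clause as well. What the paper uses later — the spaces $X_T$ in Theorem~\ref{thm:comparison-topologies}, and $BG$ for connected $G$ — is the correct colimit form displayed above.
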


\begin{proof}
Fix a finite set $T\supset S$.  We first show that the restriction functor
\[
 \scX(\widehatO{T})\longrightarrow \prod_{v\notin T}\scX(\cO_v)
\]
is an equivalence of groupoids.  For essential surjectivity, let
$(x_v)_{v\notin T}$ be a family of objects with $x_v\in \scX(\cO_v)$.  Choose
lifts $u_v\in U(\cO_v)$ along the adelic lifting presentation.  By
Proposition~\ref{prop:conrad-product}, applied to the separated algebraic space
$U$, the family $(u_v)_{v\notin T}$ is induced by a unique point
$u\in U(\widehatO{T})$.  Then $p(u)$ restricts to an object isomorphic to $x_v$ for each
$v\notin T$.

For full faithfulness, let $x,y\in \scX(\widehatO{T})$ and set
\[
 I:=\Isom_{\scX_{\widehatO{T}}}(x,y).
\]
By the diagonal hypothesis, $I$ is a separated algebraic space of finite
presentation over $\widehatO{T}$.  The final assertion of
Proposition~\ref{prop:conrad-product} gives
\[
 I(\widehatO{T})\cong \prod_{v\notin T} I(\cO_v),
\]
which is exactly the required bijection on isomorphism sets.

Since
\[
 \bA_k(T)=\prod_{v\in T}k_v\times \widehatO{T},
\]
and a stack sends finite disjoint unions of affine schemes to products of
groupoids, we obtain
\[
 \scX(\bA_k(T))
 \simeq
 \prod_{v\in T}\cX(k_v)\times
 \prod_{v\notin T}\scX(\cO_v).
\]
Finally, $\bA_k=\varinjlim_{T\supset S}\bA_k(T)$, and $\scX$ is locally of finite
presentation, hence limit preserving on affine schemes \cite[Tag 0CMQ]{StacksProject}.  Taking the filtered
colimit over $T$ gives the claimed equivalence.
\end{proof}

\subsection{Comparison theorem}

We first fix the local topology used below.  Let $R$ be a local topological ring
with continuous inversion, in the sense that $R^\times\subset R$ is open and
inversion on $R^\times$ is continuous for the subspace topology.  For locally of
finite type $R$-schemes, we use the topology on $R$-points recalled in
\cite[Section~2.2]{Cesnavicius}.  For a locally of finite type $R$-algebraic
stack $\cZ$, following \cite[Section~2.4]{Cesnavicius}, a full subcategory
$\mathcal U\subset \cZ(R)$ stable under isomorphism is declared open if, for
every $R$-morphism $f:Z\to \cZ$ from a locally of finite type $R$-scheme, the
inverse image $f(R)^{-1}(\mathcal U)$ is open in $Z(R)$.  This gives a topology
on $|\cZ(R)|$.  We apply this construction to $R=k_v$ and, for finite $v$, to
$R=\cO_v$.  For algebraic spaces it agrees with the usual topology on
$R$-points, and smooth morphisms induce open maps for these local rings
\cite[Section~2.8 and Proposition~2.9(a)]{Cesnavicius}.

For a separated algebraic space $\scZ$ of finite presentation over
$\cO_{k,S}$, with generic fibre $Z$, we topologise $\scZ(\bA_k(T))$ by Conrad's
product description, that is, through the identification
\[
 \scZ(\bA_k(T))\cong
 \prod_{v\in T} Z(k_v)\times \prod_{v\notin T} \scZ(\cO_v),
\]
with the product topology on the right.  We will not use an intrinsic topology
on stack-valued points over the non-local rings $\bA_k(T)$; at finite level the
stack topology is defined by quotienting through a presentation.

We now compare the restricted-product topology with the topology obtained by
quotienting through a single lifting presentation.  In this subsection all
topologies are placed on sets of isomorphism classes.

\begin{definition}\label{def:topological-lifting-presentation}
Let $\scX/\cO_{k,S}$ be as in Definition~\ref{def:adelically-liftable}, with
generic fibre $\cX$.  An adelic lifting presentation
$p:U\to \scX$ is called a \emph{topological adelic lifting presentation} if, in
addition, the maps
\[
 |U(k_v)|\longrightarrow |\cX(k_v)|,
 \qquad
 |U(\cO_v)|\longrightarrow |\scX(\cO_v)|,
 \qquad
 |U(\bA_k(T))|\longrightarrow |\scX(\bA_k(T))|
\]
are surjective for every place $v$, every finite place $v\notin S$, and every
finite set $T\supset S$, respectively.
\end{definition}

We will construct examples of such presentations below, see \ref{prop:quotient-stacks-liftable}.

The local factors $|\cX(k_v)|$ and $|\scX(\cO_v)|$ are endowed with the
\v{C}esnavi\v{c}ius topology described above.  For finite $T\supset S$, set
\[
 X_T:=
 \prod_{v\in T}|\cX(k_v)|\times
 \prod_{v\notin T}|\scX(\cO_v)|
\]
with the product topology.  We define the \emph{restricted product topology} on
$|\cX(\bA_k)|$ to be the colimit topology on
$\varinjlim_{T\supset S}X_T$, using Theorem~\ref{thm:stack-product-formula}.

We define the \emph{presentation-quotient topology} on $|\cX(\bA_k)|$ by giving
each $|\scX(\bA_k(T))|$ the quotient topology induced from $|U(\bA_k(T))|$ and then
taking the colimit over $T$.  The following theorem shows that this topology is
the same as the restricted product topology, and in particular is independent of
this presentation.

\begin{theorem}\label{thm:comparison-topologies}
Let $\scX/\cO_{k,S}$ satisfy the hypotheses of
Theorem~\ref{thm:stack-product-formula}, and suppose that it admits a
topological adelic lifting presentation $p:U\to \scX$.  Then the
restricted-product topology on $|\cX(\bA_k)|$ agrees with the
presentation-quotient topology.
\end{theorem}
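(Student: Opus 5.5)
Both topologies are colimit topologies over the same directed system indexed by finite $T\supset S$, with the same underlying sets by Theorem~\ref{thm:stack-product-formula}. So it suffices to show, for each fixed finite $T\supset S$, that the bijection
\[
 |\scX(\bA_k(T))|\;\cong\; X_T=\prod_{v\in T}|\cX(k_v)|\times\prod_{v\notin T}|\scX(\cO_v)|
\]
is a homeomorphism. Here the left side carries the quotient topology from $|U(\bA_k(T))|$, and the right side carries the product of the \v{C}esnavi\v{c}ius topologies. The transition maps are compatible with these bijections, so the colimit topologies then agree.

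Fix $T$ and write $q:|U(\bA_k(T))|\to|\scX(\bA_k(T))|$, which is surjective by the topological lifting hypothesis. By Conrad's description, $|U(\bA_k(T))|=\prod_{v\in T}U(k_v)\times\prod_{v\notin T}U(\cO_v)$ with the product topology. Under the bijection of Theorem~\ref{thm:stack-product-formula}, $q$ becomes the product map
\[
 \pi=\prod_v p_v:\prod_{v\in T}U(k_v)\times\prod_{v\notin T}U(\cO_v)\longrightarrow X_T.
\]
Its factors $p_v$ are the local maps $|U(k_v)|\to|\cX(k_v)|$ and $|U(\cO_v)|\to|\scX(\cO_v)|$. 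The compatibility of $q$ with $\pi$ is just the naturality of restriction to each factor. The claim is therefore that the product topology on $X_T$ equals the quotient topology induced by $\pi$.

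First, continuity. Each $p_v$ is continuous, because by definition of the \v{C}esnavi\v{c}ius topology the map induced by a morphism from a scheme (or algebraic space, using an \'etale atlas) is continuous. Hence $\pi$ is continuous, and the quotient topology is finer than the product topology. Second, openness. Each $p_v$ is surjective by hypothesis, and it is open because $p$ is smooth \cite[Proposition~2.9(a)]{Cesnavicius}. For $v\notin S$ this requires the smooth morphism $U_{\cO_v}\to\scX_{\cO_v}$ over the local ring $\cO_v$. A product of open maps is open on basic open boxes, since the image of a box is a box. So $\pi$ is a continuous open surjection, hence a quotient map. Therefore the quotient topology coincides with the product topology on $X_T$.

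The main obstacle is the openness step, for two reasons. First, we need that \v{C}esnavi\v{c}ius' openness result applies to smooth morphisms from an algebraic space $U$ to the stack $\scX$ over both $k_v$ and $\cO_v$. If the cited statement is only for schemes, I would first pass to an \'etale scheme cover of $U$; \'etale maps of spaces induce open maps on points. Second, the surjectivity of $|U(\cO_v)|\to|\scX(\cO_v)|$ is needed so that the image of a box is the full box in $X_T$; this is supplied by condition (iii) of Definition~\ref{def:adelically-liftable}. Finally, I would check that a colimit of homeomorphic directed systems yields homeomorphic colimits. This is formal once the transition maps are identified.
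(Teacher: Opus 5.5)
Your proposal is correct and is essentially the paper's own proof. For each finite $T\supset S$ you identify $|U(\bA_k(T))|$ with the product of local point spaces via Conrad's formula. You then note that the induced map to $X_T$ is a product of continuous, open (by smoothness of $p$ and \v{C}esnavi\v{c}ius' openness result) surjections, hence a quotient map, so the bijection $|\scX(\bA_k(T))|\to X_T$ is a homeomorphism compatible with enlarging $T$, and you pass to the colimit exactly as the paper does.
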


\begin{proof}
We first record the local quotient fact.  Let $R$ be either $k_v$ or, for finite
$v\notin S$, $\cO_v$.  If the map on isomorphism classes induced by the
presentation is surjective, then
\[
 |U(R)|\longrightarrow |\scX(R)|
\]
(with $\scX_{k_v}=\cX_{k_v}$ when $R=k_v$) is an open quotient map for the local
\v{C}esnavi\v{c}ius topologies.  Indeed, it is continuous by functoriality of the
stack topology.  It is open because $p$ is smooth and the rings $k_v$ and
$\cO_v$ satisfy the hypotheses under which smooth morphisms of algebraic stacks
are open on $R$-points \cite[Section~2.8 and Proposition~2.9(a)]{Cesnavicius}.
Since the map is surjective, openness implies that it is a quotient map.

Now fix a finite set $T\supset S$.  By the topology just specified for separated
algebraic spaces over $\bA_k(T)$, Conrad's product formula identifies
$|U(\bA_k(T))|$ homeomorphically with
\[
 U_T:=
 \prod_{v\in T}|U(k_v)|\times
 \prod_{v\notin T}|U(\cO_v)| .
\]
Under this identification we have a commutative square
\[
\begin{tikzcd}
 \pi_0(U(\bA_k(T))) \arrow[r,"\sim"] \arrow[d] & U_T \arrow[d] \\
 \pi_0(\scX(\bA_k(T))) \arrow[r,"\Phi_T"] & X_T .
\end{tikzcd}
\]
The left vertical map is the quotient map defining the presentation-quotient
topology on $|\scX(\bA_k(T))|$.  The right vertical map is the product of the
local maps
\[
 |U(k_v)|\twoheadrightarrow |\cX(k_v)|,
 \qquad
 |U(\cO_v)|\twoheadrightarrow |\scX(\cO_v)| .
\]
By the local quotient fact, each factor is an open quotient map.  Their product
is again an open quotient map, since basic opens in a product restrict only
finitely many factors.

It follows that a subset $W\subset X_T$ is open if and only if its inverse image
in $U_T$ is open.  Via the top horizontal homeomorphism, this is equivalent to
openness in $|U(\bA_k(T))|$ of the inverse image of $\Phi_T^{-1}(W)$, which by
the left quotient topology is equivalent to openness of $\Phi_T^{-1}(W)$ in
$|\scX(\bA_k(T))|$.  Thus $\Phi_T$ is a homeomorphism.  These homeomorphisms
are compatible with enlarging $T$, so they induce a homeomorphism on colimits.
\end{proof}

\begin{remark}
It follows from this theorem that the topology on adelic points defined above
agrees with the topology defined in
\cite[Section~2.4]{DhillonClassifying}.
\end{remark}

\subsection{Functoriality}

The restricted-product description gives functoriality for arbitrary morphisms
between stacks satisfying the same hypotheses. In \cite{DhillonClassifying}, this fact was 
proved with the additional assumption that the morphism was representable. 

\begin{theorem}\label{thm:adelic-functoriality}
Let $\scX$ and $\scY$ be Artin stacks over $\cO_{k,S}$ satisfying the
hypotheses of Theorem~\ref{thm:comparison-topologies}, with generic fibres
$\cX$ and $\cY$.  Let
\[
 f:\scX\longrightarrow \scY
\]
be any morphism over $\cO_{k,S}$.  Then the induced map
\[
 |f(\bA_k)|:|\cX(\bA_k)|\longrightarrow |\cY(\bA_k)|
\]
is continuous for the restricted-product topology, equivalently for the
presentation-quotient topology.
\end{theorem}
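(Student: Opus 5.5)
Since the restricted-product topology on $|\cX(\bA_k)|$ is the colimit topology of the spaces $X_T$, the map $|f(\bA_k)|$ is continuous as soon as each composite $X_T\to|\cY(\bA_k)|$ is continuous. So the plan is to reduce to finite level $T$ and then to individual places.

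First fix $T\supset S$. Because $f$ is defined over $\cO_{k,S}$, it sends $\scX(\cO_v)$ into $\scY(\cO_v)$ for every finite $v\notin T$. Hence, via Theorem~\ref{thm:stack-product-formula} for both stacks, $|f(\bA_k)|$ restricts to the product map
\[
 f_T:X_T=\prod_{v\in T}|\cX(k_v)|\times\prod_{v\notin T}|\scX(\cO_v)|
 \longrightarrow
 Y_T=\prod_{v\in T}|\cY(k_v)|\times\prod_{v\notin T}|\scY(\cO_v)|.
\]
The identifications of Theorem~\ref{thm:stack-product-formula} are natural in the stack, so this square commutes with $|f(\bA_k)|$ and with the inclusions into the colimits. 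The inclusion $Y_T\to|\cY(\bA_k)|$ is continuous by definition of the colimit topology. It therefore suffices to show that $f_T$ is continuous. A product of continuous maps is continuous for the product topologies, so this reduces further to continuity of the local maps $|\cX(R)|\to|\cY(R)|$ for $R=k_v$ with $v\in T$ and $R=\cO_v$ with $v\notin T$.

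Local continuity follows directly from the definition of the \v{C}esnavi\v{c}ius topology. Let $\mathcal V\subset\cY(R)$ be open. Take any $R$-morphism $g:Z\to\cX_R$ from a locally finite type $R$-scheme. Then $f\circ g:Z\to\cY_R$ is again such a morphism, so $(f\circ g)(R)^{-1}(\mathcal V)=g(R)^{-1}(f(R)^{-1}\mathcal V)$ is open in $Z(R)$. Hence $f(R)^{-1}(\mathcal V)$ is open. It is stable under isomorphism because $\mathcal V$ is. Notably, no representability of $f$ is needed here, which is exactly the gain over \cite{DhillonClassifying}. The statement for the presentation-quotient topology then follows from Theorem~\ref{thm:comparison-topologies}.

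The main point requiring care is the naturality in the first step. The equivalence of Theorem~\ref{thm:stack-product-formula} was built using chosen presentations $U$, and the presentations of $\scX$ and $\scY$ need not be compatible with $f$. I would handle this by observing that the functor $\scX(\bA_k(T))\to\prod_{v\in T}\cX(k_v)\times\prod_{v\notin T}\scX(\cO_v)$ is simply restriction along the morphisms $\Spec k_v,\Spec\cO_v\to\Spec\bA_k(T)$. It is therefore independent of $U$ (the presentation was used only to prove it is an equivalence) and commutes with composition by $f$. The same holds after passing to the colimit over $T$, since $f$ commutes with pullback along $\bA_k(T)\to\bA_k$.
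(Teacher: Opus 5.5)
Your proposal is correct and follows essentially the same route as the paper. Both arguments get local continuity directly from the definition of the \v{C}esnavi\v{c}ius topology via the composite $Z\to\scX_R\to\scY_R$, then use continuity of the coordinatewise map $f_T$ at each finite level, the final topology on the colimit, and Theorem~\ref{thm:comparison-topologies} for the presentation-quotient topology. Your extra observation that the product-formula identification is just restriction along $\Spec k_v,\Spec\cO_v\to\Spec\bA_k(T)$, and is therefore independent of the chosen presentations, spells out the naturality that the paper simply invokes.
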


\begin{proof}
Let $R=k_v$, or let $R=\cO_v$ for a finite place $v\notin S$, and write
$\scX_R$ and $\scY_R$ for the relevant base changes.  The topology on
isomorphism classes of stack-valued $R$-points is functorial directly from its
definition.  Indeed, if $B\subset |\scY_R(R)|$ is open and $Z\to\scX_R$ is a
morphism from a locally of finite type $R$-scheme, then the inverse image in
$Z(R)$ of $f_R^{-1}(B)$ is the inverse image of $B$ under the composite
\[
 Z\longrightarrow\scX_R\longrightarrow\scY_R,
\]
and is therefore open.  Hence the maps
\[
 |\cX(k_v)|\longrightarrow|\cY(k_v)|,
 \qquad
 |\scX(\cO_v)|\longrightarrow|\scY(\cO_v)|
\]
induced by $f$ are continuous whenever the corresponding factors occur.

For a finite set $T\supset S$, let
\[
 X_T=
 \prod_{v\in T}|\cX(k_v)|\times
 \prod_{v\notin T}|\scX(\cO_v)|,
\]
and define $Y_T$ similarly.  The coordinatewise map
\[
 f_T:X_T\longrightarrow Y_T
\]
is continuous for the product topologies.  These maps are compatible with
enlargement of $T$: at a place moved from the integral to the generic factor,
this follows from the commutative square induced by $\cO_v\to k_v$.  They
therefore induce a map
\[
 F:\varinjlim_{T\supset S}X_T\longrightarrow
   \varinjlim_{T\supset S}Y_T.
\]
By the naturality of Theorem~\ref{thm:stack-product-formula}, this map is
$|f(\bA_k)|$.  If $i_T$ and $j_T$ denote the structure maps into the source
and target colimits, respectively, then
\[
 |f(\bA_k)|\circ i_T=j_T\circ f_T
\]
is continuous for every $T$.  Since the topology on the source colimit is the
final topology, $|f(\bA_k)|$ is continuous for the restricted-product
topology.  The assertion for the presentation-quotient topology follows from
Theorem~\ref{thm:comparison-topologies}.
\end{proof}

\subsection{Examples}

We first record the standard source of adelically liftable quotient stacks.

\begin{proposition}\label{prop:quotient-stacks-liftable}
Let $\scZ$ be a separated algebraic space of finite presentation over
$\cO_{k,S}$, and let $\scG$ be a flat affine group scheme of finite presentation
acting on $\scZ$.  Suppose that, after enlarging $S$, there is a closed
immersion
\[
 \scG\hookrightarrow \GL_{n,\cO_{k,S}}.
\]
Then the quotient stack $[\scZ/\scG]$ is adelically liftable.  Moreover, after
replacing it by the equivalent quotient
\[
 [\scZ/\scG]\simeq [\scZ\times^{\scG}\GL_n/\GL_n],
\]
the standard atlas is a topological adelic lifting presentation.
\end{proposition}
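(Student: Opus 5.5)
The plan is to work with the quotient $\scY:=\scZ\times^{\scG}\GL_n$, which replaces the $\scG$-action by a $\GL_n$-action, and to use the atlas $p:\scY\to[\scY/\GL_n]$.

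First, after enlarging $S$, $\GL_n/\scG$ is a separated algebraic space (indeed a scheme, even quasi-affine or affine in good cases) of finite presentation over $\cO_{k,S}$. The contracted product $\scY$ is the fppf quotient of $\scZ\times\GL_n$ by the free $\scG$-action. It is an algebraic space of finite presentation, being an fppf $\scG$-torsor quotient. It is separated because $\scY\to\scZ/\scG$ is not available directly, so I would argue via the map $\scY\to\GL_n/\scG$: this is fppf-locally on the base isomorphic to $\scZ\times(\text{base})$, so it is separated and of finite presentation. Standard torsor arguments give the equivalence $[\scZ/\scG]\simeq[\scY/\GL_n]$. Thus it suffices to prove the claims for $\scX=[\scY/\GL_n]$.

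Next I verify the conditions of Definition~\ref{def:adelically-liftable} for $\scX=[\scY/\GL_n]$. For (i), the diagonal of $[\scY/\GL_n]$ is given by $\Isom(x,y)$. For $x,y$ corresponding to $(P,\phi)$ and $(Q,\psi)$ over $R$, this is a closed subspace of $\Isom_{\GL_n}(P,Q)$, which is a form of $\GL_n$ and hence affine of finite presentation, cut out by the condition of compatibility of the maps to $\scY$. That subspace is closed because $\scY$ is separated, so the diagonal is representable by separated (even affine) algebraic spaces of finite presentation. For (ii), the atlas $\scY\to[\scY/\GL_n]$ is a $\GL_n$-torsor, hence smooth and surjective.

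The remaining conditions, (iii) and the surjectivity in Definition~\ref{def:topological-lifting-presentation}, come from torsor triviality. A point of $[\scY/\GL_n]$ over $R$ lifts to $\scY(R)$ exactly when the underlying $\GL_n$-torsor is trivial. By Hilbert~90, $H^1(R,\GL_n)=0$ for $R$ a field $k_v$ and for $R$ a local ring $\cO_v$, since vector bundles over local rings are free. For $R=\bA_k(T)$, a finite product of $k_v$'s and $\widehatO{T}$, every finite projective module of constant rank $n$ over $\widehatO{T}$ is free. This holds because $\widehatO{T}$ is a product of local rings: a finite projective module corresponds, via the equivalence in the proof of Theorem~\ref{thm:stack-product-formula} applied to $B\GL_n$ (itself liftable by the local case), to a family of free modules, and is therefore free. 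Alternatively, $\widehatO{T}$ has trivial Picard-type invariants because it is a product of PIDs; more precisely, one can use that $\GL_n$-torsors over $\widehatO{T}$ are classified by $B\GL_n(\widehatO{T})\simeq\prod_v B\GL_n(\cO_v)$, which is trivial. This gives surjectivity at all three levels.

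The main obstacle is the full statement that $[\scZ/\scG]$ itself is adelically liftable, before the $\GL_n$ replacement. Since adelic liftability depends only on the stack up to equivalence, I use the presentation of the equivalent quotient $[\scY/\GL_n]$ directly; its atlas $\scY$ is a separated algebraic space of finite presentation. The delicate point I expect is separatedness and finite presentation of $\scY=\scZ\times^{\scG}\GL_n$ over a Dedekind base. I would first try to deduce it from representability of $\GL_n\to\GL_n/\scG$ as an fppf $\scG$-torsor, which makes $\scY\to\GL_n/\scG$ an fppf-locally trivial fibration with fibre $\scZ$, and from the fact that separatedness descends along fppf covers. Separatedness of $\GL_n/\scG$ over $\cO_{k,S}$ holds because $\scG$ is closed in $\GL_n$.
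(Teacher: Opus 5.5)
Your proposal is correct and follows the paper's own argument. Both pass to $\scY=\scZ\times^{\scG}\GL_n$, obtain the diagonal condition from separatedness of $\scY$ together with affineness of $\GL_n$, and deduce every lifting property from the triviality of $\GL_n$-torsors over $k_v$, $\cO_v$ and $\bA_k(T)$; you simply supply details the paper leaves implicit, such as fppf descent of separatedness along $\scY\to\GL_n/\scG$, and the non-circular use of the product equivalence for the adelically liftable stack $B\GL_n$ to trivialize torsors over $\widehatO{T}$.
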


\begin{proof}
This is the change-of-groups construction used in \cite{DhillonClassifying}.
For $\scY:=\scZ\times^{\scG}\GL_n$, one has
$[\scZ/\scG]\simeq[\scY/\GL_n]$; separatedness of $\scY$ and affineness of
$\GL_n$ give the diagonal condition.  The standard atlas is smooth and its
lifting properties follow from the triviality of $\GL_n$-torsors over $k_v$,
$\cO_v$, and $\bA_k(T)$.
\end{proof}

\subsection[Applications to BG]{Applications to \texorpdfstring{$BG$}{BG}}

We now apply the product formula to classifying stacks.  The finiteness input
below is Kottwitz's local duality theorem.  In the form needed here it is
recorded in \cite[Theorem~9.1(ii)]{ColliotTheleneFlasque}; its origin is
Kottwitz's work on the stable trace formula \cite[\S\S2.5--2.6]{Kottwitz}.  The
topology on local stacky points goes back to Moret--Bailly
\cite{MoretBailly} and is used here in the functorial form of
\cite{Cesnavicius}.

The local and adelic discreteness statements in this subsection are
characteristic-zero special cases of results of \v{C}esnavi\v{c}ius.  More
precisely, \cite[Proposition~3.5(a)]{Cesnavicius} proves the discreteness of
$H^1(R,G)$ for a smooth group algebraic space over any Henselian
\'{e}tale-open local topological ring, and
\cite[Proposition~3.7(1)]{CesnaviciusPoitouTate} proves adelic discreteness for
smooth group algebraic spaces with connected fibres over an arbitrary global
field; see also \cite[Proposition~3.4(b)]{CesnaviciusPoitouTate} for the
restricted-product homeomorphism.  We include the arguments in the present
number-field setting to identify these results directly with the
stack-theoretic product formula and topology developed above.

\begin{proposition}\label{prop:BG-adelically-liftable}
Let $G$ be a linear algebraic group over $k$.  After enlarging $S$, the group
$G$ extends to a smooth affine group scheme $\scG$ of finite presentation over
$\cO_{k,S}$, equipped with a closed immersion
\[
 \scG\hookrightarrow \GL_{n,\cO_{k,S}}.
\]
The stack $B\scG$ is an adelically liftable integral model of $BG$, and it
admits a topological adelic lifting presentation.
\end{proposition}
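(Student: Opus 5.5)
The plan is to reduce everything to Proposition~\ref{prop:quotient-stacks-liftable}, applied with $\scZ=\Spec\cO_{k,S}$ and the trivial action.

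\textbf{Spreading out.} Since $G$ is linear over a field of characteristic zero, it is smooth and affine, and it admits a closed immersion $G\hookrightarrow\GL_{n,k}$. Both $G$ and this embedding are defined by finitely many equations with coefficients in $k$, so they are defined over $\cO_{k,S}$ for some finite $S$. Take $\scG$ to be the schematic closure of $G$ in $\GL_{n,\cO_{k,S}}$. This is a flat closed subscheme of finite presentation. By standard spreading-out results (EGA IV limit arguments), after enlarging $S$ the following hold:
\begin{enumerate}[(i)]
\item $\scG$ is a closed subgroup scheme;
\item $\scG$ is smooth over $\cO_{k,S}$;
\item the multiplication, inverse, and unit of $G$ extend to $\scG$.
\end{enumerate}
Each of these properties holds on the generic fibre, and each spreads out over a dense open subset of $\Spec\cO_{k,S}$.

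\textbf{Liftability.} Now $B\scG=[\Spec\cO_{k,S}/\scG]$ with trivial action, and $\Spec\cO_{k,S}$ is a separated scheme of finite presentation. Proposition~\ref{prop:quotient-stacks-liftable} therefore shows that $B\scG$ is adelically liftable. After rewriting it as
\[
 B\scG\simeq[(\GL_n/\scG)/\GL_n],
\]
the standard atlas $\GL_n/\scG\to B\scG$ is a topological adelic lifting presentation.

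For this to work, $\GL_n/\scG$ must be a separated algebraic space of finite presentation. I would justify this as follows. The fppf quotient of a group scheme by a flat closed subgroup scheme is an algebraic space, and it is separated because $\scG$ is closed in $\GL_n$. Its generic fibre $\GL_n/G$ is even a quasi-projective variety.

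\textbf{Lifting conditions.} Next I would check the surjectivity conditions directly, since this is where the content lies. Take a point $x\in|B\scG(R)|$, that is, a $\scG$-torsor $P$ over $R$, where $R$ is one of $k_v$, $\cO_v$, or $\bA_k(T)$. The contracted product $P\times^{\scG}\GL_n$ is a $\GL_n$-torsor. It is trivial: this is Hilbert 90 over the field $k_v$ and over the local ring $\cO_v$. Over $\bA_k(T)$, which is a product of $k_v$ for $v\in T$ and $\widehatO{T}$, triviality holds factorwise. On the factor $\widehatO{T}$, a $\GL_n$-torsor is a vector bundle of rank $n$. By Proposition~\ref{prop:conrad-product} and Theorem~\ref{thm:stack-product-formula} applied to $B\GL_n$, it is determined by its local components, which are free, so it is free. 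A trivialization of $P\times^{\scG}\GL_n$ gives a section of
\[
 (P\times^{\scG}\GL_n)/\GL_n\simeq\GL_n/\scG,
\]
and this section maps to $x$.

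\textbf{Remaining conditions and main obstacle.} The diagonal of $B\scG$ is representable by $\Isom$-torsors under twisted forms of $\scG$. These are affine, hence separated and of finite presentation.

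I expect the main obstacle to be the spreading out of smoothness and of the group structure. In practice one simply cites that this holds after enlarging $S$. A secondary point is checking that $\GL_n/\scG$ is a separated algebraic space over $\cO_{k,S}$ rather than over $k$. If needed, one can first pass to a scheme model by choosing a representation in which $\scG$ is the stabilizer of a line, following Chevalley's theorem.
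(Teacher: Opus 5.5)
Your proposal is correct and follows the paper's proof: spread out a faithful representation (enlarging $S$ to obtain a smooth closed subgroup scheme $\scG\subset\GL_{n,\cO_{k,S}}$), then apply Proposition~\ref{prop:quotient-stacks-liftable} to $B\scG=[\Spec\cO_{k,S}/\scG]$; your direct check of the lifting conditions only unpacks that proposition's proof. One small slip there: with $Q=P\times^{\scG}\GL_n$, the space that becomes $\GL_n/\scG$ once $Q$ is trivialized is $Q/\scG$, on which $P$ gives a canonical section, not $Q/\GL_n$, which is just $\Spec R$.
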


\begin{proof}
Spread out a faithful representation $G\hookrightarrow\GL_{n,k}$, enlarging
$S$ so that it gives the asserted smooth model and closed immersion.  Apply
Proposition~\ref{prop:quotient-stacks-liftable} to
\[
 B\scG=[\Spec\cO_{k,S}/\scG].
\]
\end{proof}

The next elementary lemma will also be used in Section~3.

\begin{lemma}\label{lem:local-torsor-classes-discrete}
Let $F$ be a local field of characteristic zero, let $G$ be a smooth algebraic
group over $F$, let $Y$ be a separated finite type $F$-algebraic space, and let
$P\to Y$ be a $G$-torsor.  The class map
\[
 Y(F)\longrightarrow H^1(F,G),
 \qquad y\longmapsto[P_y],
\]
is locally constant for the topology on stacky $F$-points.  Consequently, if
$G$ is linear algebraic and $H^1(F,G)$ is finite, then
$|BG(F)|=H^1(F,G)$ is a finite discrete topological space.
\end{lemma}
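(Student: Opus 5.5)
The key input is that $H^1(F,G)$ is discrete for the quotient topology from a presentation. I would deduce local constancy of the class map from the openness of smooth maps on $F$-points.

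First I reduce to $G$ linear algebraic. I would treat this case first and then remark on the general smooth case. Embed $G\hookrightarrow\GL_n$ over $F$. Then $BG\simeq[(\GL_n/G)/\GL_n]$, and the atlas $q:\GL_n/G\to BG$ is smooth and surjective. Since every $\GL_n$-torsor over $F$ is trivial (Hilbert 90), the map $q(F):(\GL_n/G)(F)\to|BG(F)|$ is surjective on isomorphism classes. Its fibres are the $\GL_n(F)$-orbits: two points give isomorphic $G$-torsors exactly when they lie in the same orbit. By \cite[Proposition~2.9(a)]{Cesnavicius}, as recalled before Theorem~\ref{thm:comparison-topologies}, $q(F)$ is open. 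It is therefore an open quotient map, just as in the local quotient fact in the proof of Theorem~\ref{thm:comparison-topologies}.

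The crucial step is to show that each orbit $\GL_n(F)\cdot x$ is open in $(\GL_n/G)(F)$. The orbit map $\GL_n\to\GL_n/G$, $h\mapsto hx$, is smooth and surjective: after base change to $\bar F$ it is the quotient by a conjugate of $G$, which is smooth. Smooth morphisms of finite type $F$-schemes are open on $F$-points, by the implicit function theorem for local fields of characteristic zero. So the image $\GL_n(F)\cdot x$ is open. Each point of $|BG(F)|$ is thus the image of an open saturated set, hence open, and $|BG(F)|$ is discrete. I expect this to be the main obstacle: one has to check that the \v{C}esnavi\v{c}ius topology on $|BG(F)|$ really is the quotient topology and that the open-mapping input applies. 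Once the quotient-map property of $q(F)$ is in hand, this is formal.

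Local constancy now follows. The class map $Y(F)\to H^1(F,G)$ is the map on $F$-points induced by the classifying morphism $Y\to BG$ of $P$. By the functoriality of the stack topology (the argument in the proof of Theorem~\ref{thm:adelic-functoriality}), it is continuous. A continuous map into a discrete space is locally constant. For a general smooth $G$ not assumed linear, I would argue directly instead. Fix $y_0\in Y(F)$ and consider the torsor $\Isom(P_{y_0}\times Y,P)\to Y$ under the inner twist of $G$. This is a smooth surjection whose $F$-points map openly onto the locus of $y$ with $[P_y]=[P_{y_0}]$, which contains a neighbourhood of $y_0$ as the image of an open set containing $y_0$. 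That gives local constancy. The final finiteness clause is then immediate: a finite discrete set is exactly what is claimed, and the discreteness was proved above.
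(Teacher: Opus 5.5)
Your proof is correct, but for linear $G$ it runs in the opposite direction to the paper's. The paper first proves local constancy for every smooth $G$ by the $\Isom$-torsor argument, which is exactly the argument you give for the non-linear case. It then deduces discreteness by applying this to the torsor $\GL_n\to\GL_n/G$: the fibres of $(\GL_n/G)(F)\to H^1(F,G)$ are open, so singletons are open in the quotient topology.

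You instead prove discreteness first. You identify those fibres with $\GL_n(F)$-orbits and show that orbits are open, using smoothness of the orbit map $h\mapsto hx$. You then recover local constancy from continuity of the classifying map $Y(F)\to|BG(F)|$ into a discrete space.

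The underlying input is the same. Openness of $\GL_n(F)\cdot x$ is precisely local constancy of the class map for the particular torsor $\GL_n\to\GL_n/G$, with the orbit map playing the role of $\Isom$. Your route gains a concrete, classical homogeneous-space picture of the fibres. It costs a case split, because the $\Isom$ argument you give at the end already proves the first assertion for all smooth $G$. That makes the linear-case detour unnecessary for local constancy, and ``reduce to $G$ linear'' is not really a reduction.

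Neither route needs finiteness of $H^1(F,G)$ to get discreteness. Local constancy for the torsor attached to every morphism $Z\to BG$, checked locally on $Z$, shows directly from the definition of the \v{C}esnavi\v{c}ius topology that every subset of $|BG(F)|$ is open.
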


\begin{proof}
Fix $y_0\in Y(F)$ and let $P_0$ be the fibre of $P$ at $y_0$.  The isomorphism
space
\[
 Q:=\Isom_Y(P_0\times_FY,P)
\]
is a torsor over $Y$ under the smooth inner form $\Aut_G(P_0)$ of $G$.  Its
fibre over $y_0$ has an $F$-point.  A smooth morphism of finite type algebraic
spaces over a local field is open on $F$-points
\cite[Proposition~2.9(a)]{Cesnavicius}.  Hence the image of $Q(F)$ contains an
open neighbourhood of $y_0$, and over this neighbourhood all fibres of $P$ are
isomorphic to $P_0$.  This proves local constancy.

For the final assertion, choose a closed immersion $G\hookrightarrow\GL_n$ and
use the smooth presentation $\GL_n/G\to BG$.  Since
$H^1(F,\GL_n)=1$, the induced map
\[
 (\GL_n/G)(F)\longrightarrow H^1(F,G)
\]
is surjective.  Its fibres are open by the first part.  If $H^1(F,G)$ is
finite, the quotient topology therefore makes each singleton open.
\end{proof}

\begin{proposition}\label{prop:local-BG-finite-discrete}
Let $G$ be a connected linear algebraic group over $k$.  For every place $v$ of
$k$, the topological space
\[
 |BG(k_v)|=H^1(k_v,G)
\]
is finite and discrete.
\end{proposition}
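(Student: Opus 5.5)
By Lemma~\ref{lem:local-torsor-classes-discrete}, applied with $F=k_v$ (a local field of characteristic zero) and the connected, hence smooth, linear algebraic group $G_{k_v}$, it suffices to prove that $H^1(k_v,G)$ is finite for every place $v$. Discreteness then follows formally from the quotient topology through the presentation $\GL_n/G\to BG$, since that lemma shows each fibre of $(\GL_n/G)(k_v)\to H^1(k_v,G)$ is open. So the plan reduces to a finiteness statement, which I treat separately at archimedean and non-archimedean places.

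For archimedean $v$, I use the classical finiteness of Galois cohomology of linear algebraic groups over $\bR$ and $\mathbf{C}$ (Borel--Serre). Over $\mathbf{C}$ the set is trivial. Over $\bR$, $H^1(\bR,G)$ is finite for any linear algebraic group, for example by the Borel--Serre theorem for fields with finitely many extensions of bounded degree.

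For non-archimedean $v$, I reduce to the reductive case. Let $R_u$ be the unipotent radical of $G_{k_v}$. In characteristic zero $R_u$ is split unipotent, and so are all its twists. Hence $H^1(k_v,{}_\xi R_u)=1$ for every cocycle $\xi$, and the twisting argument shows that $H^1(k_v,G)\to H^1(k_v,G/R_u)$ is injective. The target is $H^1$ of a connected reductive group. For it I invoke Kottwitz's local duality as recorded in \cite[Theorem~9.1(ii)]{ColliotTheleneFlasque}. This gives a bijection $H^1(k_v,G)\simeq \Pic(G_{k_v})^\vee$, or in Kottwitz's form $H^1(k_v,G)\simeq \pi_0(Z(\widehat G)^{\Gamma_v})^D$. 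It applies in fact to any connected linear group over a $p$-adic field, so the reduction step may be bypassed by citing it directly. Since $\Pic(G_{k_v})$ is a finite group, being isomorphic to $H^1(k_v,\widehat{G})$-type data modulo a flasque resolution, the set $H^1(k_v,G)$ is finite. Alternatively, Borel--Serre finiteness over $p$-adic fields (finitely many extensions of each degree) gives finiteness directly for any linear algebraic group.

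The only real point needing care is making sure the cited finiteness applies to arbitrary connected linear $G$, not only reductive $G$. I expect this to be the main obstacle, though a mild one. I would handle it either by citing Borel--Serre uniformly at all places, which is the cleanest route, or by the unipotent-radical reduction above. Everything else is formal from Lemma~\ref{lem:local-torsor-classes-discrete}.
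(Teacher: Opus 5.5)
Your proposal is correct and follows essentially the paper's route: at finite places, finiteness comes from Kottwitz duality $H^1(k_v,G)\simeq\Pic(G_{k_v})^\vee$ together with finiteness of $\Pic(G_{k_v})$, which the paper cites from Sansuc's Proposition~6.10, a cleaner justification than your ``flasque resolution'' phrasing. At the other places you likewise use Borel--Serre finiteness over $\bR$ (Serre, \emph{Galois Cohomology}, III \S4) and triviality over $\mathbf{C}$, and discreteness then follows from Lemma~\ref{lem:local-torsor-classes-discrete}. Your uniform Borel--Serre alternative at all places is also valid and avoids Kottwitz duality, though the paper needs that duality later anyway; note also that in this paper $\widehat G$ denotes the character lattice, not the dual group appearing in your $\pi_0(Z(\widehat G)^{\Gamma_v})^D$.
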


\begin{proof}
For non-archimedean $v$, Kottwitz duality gives
\[
 H^1(k_v,G)\xrightarrow{\sim}
 \Hom\bigl(\Pic(G_{k_v}),\bQ/\bZ\bigr);
\]
see \cite[Theorem~9.1(ii)]{ColliotTheleneFlasque}.  Its target is finite by
\cite[Proposition~6.10]{Sansuc}.  At complex places the cohomology is trivial,
and at real places it is finite
\cite[Chapter~III, \S4]{SerreGaloisCohomology}.  Discreteness follows from
Lemma~\ref{lem:local-torsor-classes-discrete}.
\end{proof}

\begin{lemma}\label{lem:integral-BG-triviality}
Let $G$ be a connected linear algebraic group over $k$.  After enlarging $S$,
one may choose the model $\scG/\cO_{k,S}$ in
Proposition~\ref{prop:BG-adelically-liftable} to have connected geometric
fibres, and then
\[
 H^1(\cO_v,\scG)=1
\]
for every finite place $v\notin S$.
\end{lemma}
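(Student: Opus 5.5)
The plan has two parts: a spreading-out argument to get connected geometric fibres, and then Lang's theorem combined with Hensel's lemma to kill $H^1(\cO_v,\scG)$.

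\emph{Step 1: the model.} Start from the smooth affine model $\scG\hookrightarrow\GL_{n,\cO_{k,S}}$ of Proposition~\ref{prop:BG-adelically-liftable}. The generic fibre $G$ is connected, hence geometrically connected because it has a rational point (the identity). By the constructibility of the number of geometric connected components of fibres of a finite presentation morphism (EGA~IV, 9.7.9), the locus of points of $\Spec\cO_{k,S}$ over which the fibre of $\scG$ is geometrically connected is constructible. It contains the generic point, so it contains a dense open subset. Enlarging $S$ by the finitely many excluded closed points therefore makes every fibre geometrically connected. Smoothness, flatness and the closed immersion are all preserved under this localization.

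\emph{Step 2: vanishing of $H^1$.} Fix a finite place $v\notin S$, with residue field $\kappa_v$, and let $P$ be an $\scG_{\cO_v}$-torsor. Since $\scG$ is affine and smooth, $P$ is representable by a scheme that is smooth and affine over $\cO_v$, by fpqc descent of affine morphisms.
\begin{itemize}
\item The special fibre $P_{\kappa_v}$ is a torsor under the smooth connected group $\scG_{\kappa_v}$ over the finite field $\kappa_v$. By Lang's theorem, $H^1(\kappa_v,\scG_{\kappa_v})=1$, so $P_{\kappa_v}(\kappa_v)\neq\emptyset$.
\item Because $P$ is smooth over the complete, hence Henselian, ring $\cO_v$, Hensel's lemma lifts a $\kappa_v$-point to a point $P(\cO_v)$.
\end{itemize}
So $P$ is trivial and $H^1(\cO_v,\scG)=1$.

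\emph{Main obstacle.} The only delicate point is Step 1: ensuring connectedness of all fibres after a finite enlargement of $S$. Fibres at finitely many bad primes might otherwise be disconnected, and then $H^1(\kappa_v,\pi_0)$ could be nonzero. The constructibility argument handles this. An alternative, if needed, is to replace $\scG$ by its relative identity component $\scG^0$, which is open in $\scG$ since $\scG$ is smooth. However, $\scG^0$ need not remain closed in $\GL_n$, so the spreading-out route is preferable.
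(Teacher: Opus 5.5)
Your proposal is correct and follows the paper's own argument: shrink the base so that the geometric fibres are connected, use Lang's theorem to find a rational point on the special fibre of the torsor, and lift it by Hensel's lemma over the Henselian ring $\cO_v$. You also spell out two steps the paper leaves implicit, namely the constructibility argument for connectedness of the fibres and the representability of the torsor by a smooth affine scheme.
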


\begin{proof}
Connectedness and smoothness of the geometric fibres hold after shrinking the
base.  Let $\scP\to\Spec\cO_v$ be a $\scG$-torsor.  Its special fibre is a
torsor under the connected algebraic group $\scG_{\kappa(v)}$ over the finite
field $\kappa(v)$.  Lang's theorem gives a $\kappa(v)$-point of this special
fibre \cite{Lang}.  Since $\scP$ is smooth over the Henselian discrete
valuation ring $\cO_v$, the point lifts to an $\cO_v$-point.  A torsor with a
section is trivial.
\end{proof}

\begin{theorem}
\label{thm:BG-adelic-discrete}
Let $G$ be a connected linear algebraic group over $k$, and let
$\Sigma\subset\Omega_k$ be any finite set of places.  The product formula gives
a natural bijection
\[
 |BG(\bA_k^\Sigma)|
 \xrightarrow{\sim}
 \prod\nolimits_{v\notin\Sigma}' H^1(k_v,G),
\]
where the prime denotes the restricted pointed product with respect to the
trivial classes: its elements are families that are trivial at all but finitely
many places.  For the adelic topology, $|BG(\bA_k^\Sigma)|$ is discrete.  In
particular, a subset of $|BG(\bA_k^\Sigma)|$ is dense if and only if it is the
whole space; hence strong approximation for $BG$ is equivalent to surjectivity
of the corresponding localization map.
\end{theorem}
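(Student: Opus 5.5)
We will deduce the statement from Theorem~\ref{thm:stack-product-formula} applied to the model $B\scG$ of Proposition~\ref{prop:BG-adelically-liftable}. First we choose the base set so the product formula applies. Take $S\supseteq\Omega_k^\infty\cup\Sigma$ large enough that Proposition~\ref{prop:BG-adelically-liftable} and Lemma~\ref{lem:integral-BG-triviality} hold. The adeles away from $\Sigma$ are the filtered colimit over $T\supseteq S$ of $\prod_{v\in T\setminus\Sigma}k_v\times\widehatO{T}$. The proof of Theorem~\ref{thm:stack-product-formula} runs verbatim for this ring: the finite factors split off, the equivalence $B\scG(\widehatO{T})\simeq\prod_{v\notin T}B\scG(\cO_v)$ holds, and limit preservation passes to the colimit. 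This gives
\[
 |BG(\bA_k^\Sigma)|\cong\varinjlim_{T\supseteq S}\Bigl(\prod_{v\in T\setminus\Sigma}H^1(k_v,G)\times\prod_{v\notin T}H^1(\cO_v,\scG)\Bigr).
\]
By Lemma~\ref{lem:integral-BG-triviality} each integral factor $H^1(\cO_v,\scG)$ is a single point, namely the trivial class. So the stage-$T$ set is $\prod_{v\in T\setminus\Sigma}H^1(k_v,G)$, and the transition maps extend a family by trivial classes. The colimit is therefore exactly the set of families trivial at almost all places, i.e.\ the restricted pointed product. Naturality in $\Sigma$ and in the choice of model holds because any two integral models agree after enlarging $S$.

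Next we prove discreteness. The topology is the restricted-product (colimit) topology, which by Theorem~\ref{thm:comparison-topologies} agrees with the presentation-quotient topology. At stage $T$, the factors $H^1(k_v,G)$ for $v\in T\setminus\Sigma$ are finite discrete by Proposition~\ref{prop:local-BG-finite-discrete}. The factors indexed by $v\notin T$ are singletons. Hence $X_T$ is a finite discrete space.

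It remains to check that the colimit topology is discrete. Let $W$ be any subset of the colimit. Its preimage in each $X_T$ is open, since $X_T$ is discrete, so $W$ is open in the final topology. This gives discreteness.

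The consequences are then formal. In a discrete space the closure of a subset is the subset itself, so a subset is dense exactly when it is everything. Strong approximation off $\Sigma$ asks that the image of $BG(k)\to BG(\bA_k^\Sigma)$ be dense, which is therefore equivalent to surjectivity of $H^1(k,G)\to\prod'_{v\notin\Sigma}H^1(k_v,G)$.

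The main obstacle is the first step. Theorem~\ref{thm:stack-product-formula} is stated for the full adeles, whereas $\Sigma$ may contain finite places, or omit some archimedean places. If re-running the argument for $\bA_k^\Sigma$ is awkward, we will instead use the homeomorphism $\bA_k\cong k_\Sigma\times\bA_k^\Sigma$ and the stack product decomposition $|BG(\bA_k)|\cong\prod_{v\in\Sigma}H^1(k_v,G)\times|BG(\bA_k^\Sigma)|$. Discreteness of the full adelic space then passes to the factor $|BG(\bA_k^\Sigma)|$. This route requires checking that this product decomposition is compatible with the topologies, which we would verify at each finite stage $T$.
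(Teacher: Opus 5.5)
Your proposal is correct and follows essentially the same route as the paper: apply the product formula to $B\scG$ with a connected smooth model, so that the integral factors $H^1(\cO_v,\scG)$ are singletons by Lemma~\ref{lem:integral-BG-triviality}, then use Proposition~\ref{prop:local-BG-finite-discrete} for the finitely many remaining local factors. For discreteness, you argue that every stage $X_T$ is discrete and hence every subset of the colimit is open in the final topology; the paper instead exhibits a basic open singleton at one stage, and your version is, if anything, slightly more careful about the colimit topology.
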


\begin{proof}
Choose $S$ and a connected smooth model $\scG/\cO_{k,S}$ as in
Lemma~\ref{lem:integral-BG-triviality}, with $S$ containing all archimedean
places.  Theorem~\ref{thm:stack-product-formula}, applied to $B\scG$, identifies
an adelic point with a family
\[
 \xi=(\xi_v)_v,\qquad \xi_v\in H^1(k_v,G),
\]
which belongs to the integral subset $H^1(\cO_v,\scG)$ for almost every finite
$v$.  At every $v\notin S$ this integral subset is the singleton consisting of
the trivial torsor.  Thus $\xi_v=1$ for all but finitely many places, proving the
set-theoretic description.

Fix such a family in the away-from-$\Sigma$ adelic space.  Choose a finite set
$T$ containing $S\cup\Sigma$ and every place at which $\xi_v$ is non-trivial.
For $v\in T\setminus\Sigma$, the singleton $\{\xi_v\}$ is open by
Proposition~\ref{prop:local-BG-finite-discrete}.  For $v\notin T$, the integral
factor is the singleton $H^1(\cO_v,\scG)=\{1\}$.  Therefore
\[
 \prod_{v\in T\setminus\Sigma}\{\xi_v\}
 \times
 \prod_{v\notin T} H^1(\cO_v,\scG)
\]
is a basic open neighbourhood containing no other adelic point.  Every
singleton is therefore open.
\end{proof}

\begin{remark}\label{rem:formal-lemma-discrete-warning}
Theorem~\ref{thm:BG-adelic-discrete} explains why an ordinary formal-lemma
argument must be used with care for $BG$.  Modifying a component at a new place
outside $\Sigma$ normally changes an isolated point of $BG(\bA_k^\Sigma)$.  To
preserve a prescribed away-from-$\Sigma$ point, any correction of the global
reciprocity obstruction must instead be made at places belonging to $\Sigma$.
\end{remark}

\subsection{Relative classifying stacks}

\begin{proposition}\label{prop:relative-classifying-liftable}
Let $\scY/\cO_{k,S}$ be adelically liftable, with adelic lifting presentation
$P\to \scY$.  Let
\[
 \scA\longrightarrow \scY
\]
be a smooth separated group algebraic space of finite presentation with
geometrically connected fibres; in particular, $\scA$ may be an abelian scheme
over $\scY$.  Then the relative classifying stack $B_{\scY}\scA$ is
adelically liftable.
\end{proposition}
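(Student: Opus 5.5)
We must verify the three conditions of Definition~\ref{def:adelically-liftable} for $B_{\scY}\scA$ over $\cO_{k,S}$. The plan is to build the presentation as a composite. First, the base change $B_P\scA_P := B_{\scY}\scA\times_{\scY}P\to B_{\scY}\scA$ is smooth and surjective, since it is pulled back from $P\to\scY$. Second, the tautological section $P\to B_P\scA_P$, which classifies the trivial torsor, is smooth and surjective, because $\scA_P\to P$ is smooth. The composite
\[
 P\longrightarrow B_P\scA_P\longrightarrow B_{\scY}\scA
\]
is therefore smooth and surjective. It sends $u\in P(R)$ to the pair $(p(u),\text{trivial }\scA_{p(u)}\text{-torsor})$. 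Since $P$ is a separated algebraic space of finite presentation, this composite is a candidate adelic lifting presentation, and condition~(ii) holds.

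For the diagonal, condition~(i), factor the diagonal of $B_{\scY}\scA$ through $B_{\scY}\scA\times_{\scY}B_{\scY}\scA$. The relative diagonal over $\scY$ has fibres $\Isom(E,E')$ for torsors $E,E'$ under $\scA_y$. Each such fibre is an $\scA$-torsor over the base, hence a separated algebraic space of finite presentation, because $\scA$ is separated of finite presentation. The remaining map $B_{\scY}\scA\times_{\scY}B_{\scY}\scA\to B_{\scY}\scA\times B_{\scY}\scA$ is a base change of $\Delta_{\scY}$, which is representable by separated algebraic spaces of finite presentation by hypothesis on $\scY$. Representability by such spaces is stable under composition and base change, so condition~(i) holds.

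The main point is condition~(iii): for finite $v\notin S$, every object of $B_{\scY}\scA(\cO_v)$ must lift to $P(\cO_v)$. An object is a pair $(y,E)$ with $y\in\scY(\cO_v)$ and $E$ an $\scA_y$-torsor over $\cO_v$. By the liftability of $\scY$, choose $u\in P(\cO_v)$ with $p(u)\simeq y$, and transport $E$ along this isomorphism. It then suffices to show $E$ is trivial, i.e.\ $H^1(\cO_v,\scA_y)=1$. Here $\scA_y$ is a smooth separated group algebraic space of finite presentation over $\cO_v$ with connected fibres, so the argument of Lemma~\ref{lem:integral-BG-triviality} applies. The special fibre $E_{\kappa(v)}$ is a torsor under a connected smooth group over the finite field $\kappa(v)$, so it has a rational point by Lang's theorem; this is valid for group algebraic spaces, which over a field are schemes. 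Since $E$ is smooth over the Henselian ring $\cO_v$, that point lifts to an $\cO_v$-point, and a torsor with a section is trivial. Hence $(y,E)\simeq$ the image of $u$, and condition~(iii) holds.

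The step I expect to need the most care is the use of Lang's theorem and Hensel lifting for algebraic spaces rather than schemes. It relies on the facts that a group algebraic space over a field is a scheme, and that smooth algebraic spaces over a Henselian local ring have surjective reduction on points. I would cite the Stacks Project for both.
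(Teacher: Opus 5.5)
Your proposal is correct and follows essentially the same route as the paper: the presentation $P\to B_P\scA_P\to B_{\scY}\scA$ is 2-isomorphic to the paper's composite $P\to\scY\to B_{\scY}\scA$, and condition (iii) is handled the same way, by lifting $y$ to $P(\cO_v)$ and then using Lang's theorem plus Hensel lifting to get $H^1(\cO_v,\scA_y)=1$. Your treatment of the diagonal is slightly more complete than the paper's: you factor it through $B_{\scY}\scA\times_{\scY}B_{\scY}\scA$ and a base change of $\Delta_{\scY}$, whereas the paper's one-line appeal to isomorphism sheaves of torsors leaves the contribution of $\Isom_{\scY}(y,y')$ implicit. (For non-commutative $\scA$, these isomorphism sheaves are torsors under inner forms of $\scA_y$ rather than under $\scA_y$ itself; this changes nothing, since they are still \'etale locally isomorphic to $\scA_y$.)
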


\begin{proof}
The morphism $\scY\to B_{\scY}\scA$ classifying the trivial $\scA$-torsor is
smooth, representable, and surjective.  Thus
\[
 P\longrightarrow \scY\longrightarrow B_{\scY}\scA
\]
is a smooth presentation by a separated algebraic space.  The diagonal of
$B_{\scY}\scA$ is controlled by isomorphism sheaves between $\scA$-torsors;
such isomorphism sheaves are torsors under $\scA$, hence are separated
algebraic spaces of finite presentation.

It remains to check integral lifting.  Let $v\notin S$ and let
$\xi\in B_{\scY}\scA(\cO_v)$.  It is given by an object
$y\in \scY(\cO_v)$ and an $\scA_y$-torsor $Q\to \Spec \cO_v$.  By the lifting
property for $P\to \scY$, after replacing $y$ by an isomorphic object we may
assume that $y$ comes from a point of $P(\cO_v)$.  The torsor $Q$ is smooth over
$\cO_v$.  Its special fibre is a torsor under the connected algebraic group
$(\scA_y)_{\kappa(v)}$ over the finite field $\kappa(v)$, so Lang's theorem \cite{Lang}
gives a $\kappa(v)$-point.  Hensel lifting then gives an $\cO_v$-point of $Q$,
so $Q$ is trivial.  Hence $\xi$ lifts to $P(\cO_v)$.
\end{proof}

\begin{corollary}\label{cor:abelian-classifying-stack}
Let $A$ be an abelian variety over $k$.  After enlarging $S$ so that $A$ has an
abelian scheme model $\scA$ over $\cO_{k,S}$, the classifying stack $BA$ is
adelically liftable, with integral model $B\scA$.
\end{corollary}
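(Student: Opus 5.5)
The corollary should follow directly from Proposition~\ref{prop:relative-classifying-liftable}, applied with base $\scY=\Spec\cO_{k,S}$. The plan is to check its hypotheses in this special case.

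First, enlarge $S$ so that $A$ spreads out to an abelian scheme $\scA\to\Spec\cO_{k,S}$. This uses good reduction outside finitely many places: choose a projective model of $A$ over $\cO_{k,S}$, spread out the group law, and shrink the base until the model is smooth and proper with geometrically connected fibres. The group law extends by the usual spreading-out of morphisms of finite presentation, together with the identities it must satisfy.

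Next, take $\scY=\Spec\cO_{k,S}$ with the identity as presentation $P\to\scY$. This is trivially adelically liftable in the sense of Definition~\ref{def:adelically-liftable}:
\begin{enumerate}[(i)]
\item the diagonal is an isomorphism;
\item the identity is a smooth surjective presentation by a separated scheme of finite presentation;
\item each $\scY(\cO_v)$ is a single point, so lifting along the identity is automatic.
\end{enumerate}
The abelian scheme $\scA\to\scY$ is smooth, separated and of finite presentation with geometrically connected fibres, so Proposition~\ref{prop:relative-classifying-liftable} applies. It shows that $B_{\scY}\scA=B\scA$ is adelically liftable. Finally, its generic fibre is $BA$, since formation of classifying stacks commutes with base change $\Spec k\to\Spec\cO_{k,S}$. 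Hence $B\scA$ is an integral model of $BA$ in the sense of Definition~\ref{def:integral-model-stack}.

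The only step with any content is the existence of the abelian scheme model after enlarging $S$. This is standard spreading out, or equivalently the existence of the Néron model with good reduction at almost all places. Everything else is bookkeeping, the key input being Lang's theorem, which was already used inside Proposition~\ref{prop:relative-classifying-liftable}.
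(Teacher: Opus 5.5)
Your proposal is correct and is the same argument as the paper's proof, which simply applies Proposition~\ref{prop:relative-classifying-liftable} with $\scY=\Spec\cO_{k,S}$ and the abelian scheme $\scA\to\Spec\cO_{k,S}$. Your check that $\Spec\cO_{k,S}$ with the identity presentation satisfies Definition~\ref{def:adelically-liftable}, and that $B\scA$ has generic fibre $BA$, just makes explicit what the paper leaves implicit; the existence of $\scA$ is already a hypothesis of the statement.
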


\begin{proof}
Apply Proposition~\ref{prop:relative-classifying-liftable} to
$\scY=\Spec\cO_{k,S}$ and the abelian scheme $\scA\to \Spec\cO_{k,S}$.
\end{proof}

\begin{remark}
The abelian case is singled out because it is the basic non-affine application
not covered by Proposition~\ref{prop:BG-adelically-liftable}.  No special
property of abelian schemes beyond the hypotheses of
Proposition~\ref{prop:relative-classifying-liftable} is used.  For smooth
connected affine groups, the analogous assertion already follows from
Proposition~\ref{prop:BG-adelically-liftable}, after spreading out and enlarging
$S$.
\end{remark}

\section{Strong approximation}\label{sec:strong-approximation}

In accordance with the convention in the Notation section, all adelic spaces
below are sets of isomorphism classes, with the vertical bars suppressed, and
are equipped with the adelic topology defined in
Section~\ref{sec:product-formula}.

We write $\Brun(\cX)$ for the unramified Brauer group defined in
Definition~\ref{def:unramified-brauer-stack}.  The computations for the
classifying stacks appearing below are proved in
Section~\ref{sec:classifying}.

Let $\cX$ be a finite type algebraic stack over $k$ for which the
Brauer--Manin pairing is defined.  Thus, for $b\in \Br(\cX)$ and an adelic
point $x=(x_v)_v$, one has local evaluations $b(x_v)\in \Br(k_v)$ and local
invariants
\[
 \inv_v b(x_v)\in \bQ/\bZ.
\]
For a subgroup $C\subset \Br(\cX)$, write
\[
 \cX(\bA_k)^C
 :=
 \left\{x=(x_v)_v\in \cX(\bA_k):
 \sum_v\inv_v b(x_v)=0\text{ for every }b\in C\right\}.
\]
The sums are finite by Lemma~\ref{lem:BM-finite-support}.

\begin{definition}
\label{def:strong-approximation-off-S}
Let $S\subset \Omega_k$ be finite, and let
\[
 p^S:\cX(\bA_k)\longrightarrow \cX(\bA_k^S)
\]
be the projection away from $S$.  For $C\subset \Br(\cX)$ put
\[
 \cX(\bA_k^S)^C:=p^S\bigl(\cX(\bA_k)^C\bigr).
\]
We say that $\cX$ satisfies \emph{strong approximation off $S$ with respect to
$C$} if the diagonal image of $\cX(k)$ is dense in $\cX(\bA_k^S)^C$.

When $C=0$, this says that the diagonal image of $\cX(k)$ is dense in the full
space $\cX(\bA_k^S)$; in this case we say that \emph{plain strong approximation
off $S$} holds.
\end{definition} 
 
We first record a stronger form of the theorem from
\cite{DhillonClassifying} that follows directly from the same homogeneous-space
method in that paper.  In particular, the omitted set need not contain all archimedean
places, and it need not contain a finite place.

\begin{theorem}
\label{thm:BD-Dhillon-full-Brauer}
Let $G$ be a connected linear algebraic group over $k$, and let
$S\subset\Omega_k$ be a non-empty finite set of places.  Then $BG$ satisfies
strong approximation off $S$ with respect to its full Brauer group:
\[
 \overline{BG(k)}=BG(\bA_k^S)^{\Br(BG)}.
\]
No hypothesis is imposed on whether the places in $S$ are finite or
archimedean.
\end{theorem}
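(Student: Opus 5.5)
The plan is to follow the homogeneous-space method of \cite{DhillonClassifying}, replacing the older strong approximation input by the Borovoi--Demarche theorem for homogeneous spaces of $\SL(V)$ with connected stabilizers. That theorem is valid for any non-empty finite $S$, since $\SL(V)$ is simply connected and semisimple, so strong approximation for it off any non-empty $S$ gives the needed density.

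\emph{Step 1: set up the presentation.} Fix a faithful representation $G\hookrightarrow H:=\SL(V)$ and put $X:=H/G$, a quasi-projective homogeneous space of $H$ with connected geometric stabilizer $G$. The $H$-torsor $X\to[X/H]\simeq BG$ is a smooth presentation. Since $H^1(F,\SL(V))=1$ for every field $F/k$, the maps $X(k)\to BG(k)$ and $X(k_v)\to BG(k_v)$ are surjective on isomorphism classes. The map $X(\bA_k^S)\to BG(\bA_k^S)$ is surjective as well: at the places where $\xi_v=1$, one lifts integrally to the base point $eG$. By Theorem~\ref{thm:adelic-functoriality}, $\pi:X(\bA_k^S)\to BG(\bA_k^S)$ is continuous. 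By Theorem~\ref{thm:comparison-topologies} and the openness of smooth maps, it is also open, though we only need surjectivity plus continuity.

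\emph{Step 2: compare Brauer--Manin sets.} Pull back gives $\pi^*:\Br(BG)\to\Br(X)$, and functoriality of evaluation gives $\pi\bigl(X(\bA_k)^{\pi^*\Br(BG)}\bigr)\subseteq BG(\bA_k)^{\Br(BG)}$. The key claim is that $\Br(BG)\to\Br(X)$ surjects modulo $\Br(k)$, or at least onto the part $\Br_1$ relevant to Borovoi--Demarche. For this I would use the Hochschild--Serre/Sansuc-type sequence for the $H$-torsor $X\to BG$ with $\Pic(H)=0$, $\Br(H)=\Br(k)$ and $H$ simply connected. Alternatively, I would use the known computation $\Br(X)/\Br(k)\simeq\Br_a(X)\simeq H^1(k,\widehat G)$-type groups from \cite{DhillonClassifying}, where this identification was already made. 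Granting it, every $\xi\in BG(\bA_k^S)^{\Br(BG)}$ is the projection of some $\xi'\in BG(\bA_k)^{\Br(BG)}$. Lift $\xi'$ to $x\in X(\bA_k)$ using the surjectivity from Step~1, including at places of $S$. Because the pairing factors through $\pi$, we get $x\in X(\bA_k)^{\Br(X)}$.

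\emph{Step 3: conclude.} Borovoi--Demarche says $X(k)$ is dense in the projection $X(\bA_k^S)^{\Br(X)}$; the nonemptiness of $S$ is needed because $H(k_S)$ must be non-compact, or, if $S$ is only archimedean with $H(k_S)$ compact, one checks that their theorem still applies. So there is $x_0\in X(k)$ whose image lies in any neighbourhood of $p^S(x)$. Taking $\pi^{-1}$ of the singleton $\{\xi\}$, which is open by Theorem~\ref{thm:BG-adelic-discrete}, gives $\pi(x_0)=\xi$. The reverse inclusion $\overline{BG(k)}\subseteq BG(\bA_k^S)^{\Br(BG)}$ follows from the reciprocity law together with closedness of the Brauer--Manin set; the latter is automatic here, since the space is discrete.

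The main obstacle is Step~2: the identification of $\Br(X)$ with $\pi^*\Br(BG)$ modulo constants, and the verification that Borovoi--Demarche's hypotheses hold for arbitrary non-empty $S$, including purely archimedean $S$ with compact $\SL(V)(k_S)$. For the latter, I would first try enlarging $V$, which is harmless, or else cite their theorem in the form that requires only $S\neq\emptyset$ with $H$ simply connected and $H'(k_S)$ non-compact for each simple factor, as their statement allows.
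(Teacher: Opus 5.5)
Your proposal is correct and follows essentially the same route as the paper: present $BG\simeq[(\SL(V)/G)/\SL(V)]$, use \cite[Proposition~5.3]{DhillonClassifying} to get $q^*:\Br(BG)\xrightarrow{\sim}\Br(X)$ (which settles your Step~2), lift an adelic point using that $\SL(V)$ is special, apply \cite[Theorem~6.1]{BorovoiDemarche}, and project away from $S$ to remove the $H(k_S)$ factor. Your worry about compact $H(k_S)$ is unnecessary: for $\dim V\ge 2$ the group $\SL(V)(k_v)$ is non-compact at every place, archimedean ones included, so strong approximation for $\SL(V)$ off any non-empty $S$ holds and there is no residual case to check.
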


\begin{proof}
Choose a faithful representation
\[
 G\hookrightarrow H:=\SL(V)
\]
with $\dim V\geq 2$, and put $X:=H/G$.  The residual $H$-action gives
\[
 BG\simeq[X/H].
\]
For the quotient atlas $q:X\to BG$ and the image $x_0\in X(k)$ of the identity,
\cite[Proposition~5.3]{DhillonClassifying} gives
\[
 q^*:\Br(BG)\xrightarrow{\sim}\Br(X)
\]
and identifies the normalized Brauer group of $BG$ with
\[
 \Br_{x_0}(X):=\ker\bigl(x_0^*:\Br(X)\to\Br(k)\bigr).
\]

Since $H=\SL(V)$ is semisimple and simply connected, $S\ne\varnothing$, and
$H(k_v)$ is non-compact for every $v$, strong approximation gives density of
$H(k)$ in $H(\bA_k^S)$
\cite[Chapter~7, Theorem~7.12]{PlatonovRapinchuk}.  Hence
\cite[Theorem~6.1]{BorovoiDemarche} gives
\[
 X(\bA_k)^{\Br_{x_0}(X)}
 =\overline{H(k_S)\cdot X(k)}
 \qquad\text{inside }X(\bA_k).
\]
This is the needed Colliot--Th\'el\`ene--Xu formulation; compare
\cite[Theorem~3.7(b)]{ColliotTheleneXu}.

Let $U^S$ be a neighbourhood of
$\xi^S\in BG(\bA_k^S)^{\Br(BG)}$, and lift $\xi^S$ to
$\xi\in BG(\bA_k)^{\Br(BG)}$.  Since $H$ is special, all local and adelic
$H$-torsors are trivial, so $\xi$ lifts to $y\in X(\bA_k)$; functoriality
together with $q^*$ gives
\[
 y\in X(\bA_k)^{\Br_{x_0}(X)}.
\]
The inverse image of $U^S$ in $X(\bA_k^S)$ is open.  After projection away
from $S$, the displayed equality loses the factor $H(k_S)$, so it supplies
$y_0\in X(k)$ whose image under $q$ lies in $U^S$.  Thus the diagonal image
is dense.
\end{proof}

\begin{corollary}\label{thm:full-brauer-exact}
Let $S\subset\Omega_k$ be a non-empty finite set.  Then the image of the
localization map is exactly the projected full Brauer--Manin set:
\begin{equation}\label{eq:away-S-full-Brauer-exactness}
 \operatorname{im}\bigl(BG(k)\longrightarrow BG(\bA_k^S)\bigr)
 =BG(\bA_k^S)^{\Br(BG)}.
\end{equation}
\end{corollary}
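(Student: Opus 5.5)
Combine Theorem~\ref{thm:BD-Dhillon-full-Brauer} with the discreteness result Theorem~\ref{thm:BG-adelic-discrete}. The corollary is stated for a connected linear algebraic group $G$ as in the theorem, and the argument has three steps.

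First, check that the global image lies in the projected full Brauer--Manin set. Let $x\in BG(k)$. Its diagonal image in $BG(\bA_k)$ is orthogonal to all of $\Br(BG)$, by the reciprocity law $\sum_v\inv_v b(x)=0$ for $b(x)\in\Br(k)$. The sum is finite by Lemma~\ref{lem:BM-finite-support}. Hence $x\in BG(\bA_k)^{\Br(BG)}$. The localization map to $BG(\bA_k^S)$ factors as the diagonal map followed by the projection $p^S$. So the image of $x$ lies in $p^S\bigl(BG(\bA_k)^{\Br(BG)}\bigr)=BG(\bA_k^S)^{\Br(BG)}$, giving the inclusion $\subseteq$.

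Second, Theorem~\ref{thm:BD-Dhillon-full-Brauer} says that the closure of the image of $BG(k)$ in $BG(\bA_k^S)$ equals $BG(\bA_k^S)^{\Br(BG)}$. This uses $S\neq\varnothing$, which is exactly the hypothesis of the corollary.

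Third, apply Theorem~\ref{thm:BG-adelic-discrete} with $\Sigma=S$: the space $BG(\bA_k^S)$ is discrete, so every subset is closed. In particular the image of $BG(k)$ equals its own closure. Combining with the second step gives
\[
 \operatorname{im}\bigl(BG(k)\to BG(\bA_k^S)\bigr)=\overline{BG(k)}=BG(\bA_k^S)^{\Br(BG)}.
\]
This proves \eqref{eq:away-S-full-Brauer-exactness}; the first step's inclusion is then also a consequence, but records why the closure sits inside the Brauer--Manin set.

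The only point needing care is consistency of topologies. The closure in Theorem~\ref{thm:BD-Dhillon-full-Brauer} is taken in the adelic topology of Section~\ref{sec:product-formula} on isomorphism classes. Theorem~\ref{thm:BG-adelic-discrete} proves discreteness for that same restricted-product topology on $BG(\bA_k^\Sigma)$, identified with the presentation-quotient topology via Theorem~\ref{thm:comparison-topologies}. Since both results use the same topology, no further argument is needed and I expect no real obstacle.
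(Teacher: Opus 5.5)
Your proposal is correct and follows the paper's own proof: Theorem~\ref{thm:BD-Dhillon-full-Brauer} gives density of the global image in the projected full Brauer--Manin set, and Theorem~\ref{thm:BG-adelic-discrete} makes $BG(\bA_k^S)$ discrete, so the image equals its closure. Your first step, the inclusion $\subseteq$ via reciprocity, is already contained in the equality $\overline{BG(k)}=BG(\bA_k^S)^{\Br(BG)}$; it is harmless but redundant.
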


\begin{proof}
Theorem~\ref{thm:BD-Dhillon-full-Brauer} gives density in the displayed set,
which is discrete by Theorem~\ref{thm:BG-adelic-discrete}; hence it is the
diagonal image.
\end{proof}

We now examine plain strong approximation.  The first calculation gives an
exact criterion for $B\PGL_n$, including the contribution of real places.  The
second gives a complete criterion for classifying stacks of cyclic norm-one
tori.  Both examples will be reused in Section~\ref{sec:classifying} after the
unramified Brauer group has been computed.

\subsection[Approximation for BPGLn]{Approximation for
\texorpdfstring{$B\PGL_n$}{BPGLn}}

Let $n\geq2$ and let $S\subset\Omega_k$ be finite.  Define
\begin{equation}\label{eq:def-ISn}
 I_S(n):=
 \sum_{v\in S}\inv_v\bigl(\Br(k_v)[n]\bigr)
 \subseteq \frac1n\bZ/\bZ.
\end{equation}
Here the sum denotes the subgroup generated by the indicated local invariant
images.  At a complex place the summand is zero.  At a real place it is zero
when $n$ is odd and is $\{0,\tfrac12\}$ when $n$ is even.

\begin{theorem}
\label{thm:PGLn-approximation-criterion}
Let
\[
 (A_v)_{v\notin S}\in B\PGL_n(\bA_k^S)
\]
be an adelic family of degree-$n$ central simple algebras.  It is the
localization away from $S$ of a degree-$n$ central simple algebra over $k$ if
and only if
\begin{equation}\label{eq:PGLn-globalization-condition}
 \sum_{v\notin S}\inv_v(A_v)\in I_S(n).
\end{equation}
Consequently,
\begin{equation}\label{eq:PGLn-surjectivity-criterion}
 B\PGL_n(k)\longrightarrow B\PGL_n(\bA_k^S)
 \quad\text{is surjective}
 \quad\Longleftrightarrow\quad
 I_S(n)=\frac1n\bZ/\bZ.
\end{equation}
After the computation of the unramified Brauer group in
Corollary~\ref{cor:characterfree-Brun-BG}, the same criterion governs strong
approximation with respect to $\Brun(B\PGL_n)$.
\end{theorem}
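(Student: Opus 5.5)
We will argue directly with central simple algebras and class field theory, using the paper's earlier results only for the adelic description. By Theorem~\ref{thm:BG-adelic-discrete}, a point of $B\PGL_n(\bA_k^S)$ is a family $(A_v)_{v\notin S}$ of degree-$n$ central simple algebras, split (i.e.\ $\cong M_n(k_v)$) for all but finitely many $v$. Thus the sum in \eqref{eq:PGLn-globalization-condition} is finite, and each $\inv_v(A_v)$ lies in $\frac1n\bZ/\bZ$ because $[A_v]\in\Br(k_v)[n]$. Two local facts will be used throughout. First, over a local field, a degree-$n$ central simple algebra is determined up to isomorphism by its Brauer class. Second, every class in $\Br(k_v)[n]$ is realized by a degree-$n$ algebra: at finite places because index equals period, and at real places because $\mathbf H\otimes M_{n/2}$ exists when $n$ is even, while $\Br(\bR)[n]=0$ when $n$ is odd. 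The latter also gives the stated description of the summands of $I_S(n)$.

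For necessity, let $A$ be a global degree-$n$ algebra. The Albert--Brauer--Hasse--Noether sequence gives $\sum_v\inv_v(A)=0$, so
\[
 \sum_{v\notin S}\inv_v(A_v)=-\sum_{v\in S}\inv_v(A_{k_v}).
\]
Each term on the right lies in $\inv_v(\Br(k_v)[n])$, hence the sum lies in $I_S(n)$.

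For sufficiency, assume $\sum_{v\notin S}\inv_v(A_v)=c\in I_S(n)$. We write $-c=\sum_{v\in S}a_v$ with $a_v\in\inv_v(\Br(k_v)[n])$; this is possible because $I_S(n)$ is the subgroup generated by these images, and each image is already a subgroup. The family $(a_v)_{v\in S}\cup(\inv_v A_v)_{v\notin S}$ has finite support and total sum zero. By the exactness of $0\to\Br(k)\to\bigoplus_v\Br(k_v)\to\bQ/\bZ\to0$, it is the set of local invariants of a class $\alpha\in\Br(k)$. Each local invariant is killed by $n$, so $n\alpha$ is locally trivial and hence zero. Since index equals period over number fields (again by the Hasse principle and cyclicity), $\alpha$ is represented by a central division algebra of degree $d$ with $d\mid n$; then $A:=D\otimes M_{n/d}(k)$ has degree $n$ and the class $\alpha$. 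By the first local fact, $A_{k_v}\cong A_v$ for every $v\notin S$, which proves the first claim. The main obstacle is bookkeeping rather than depth: the degree must be exactly $n$, and the real places must be handled correctly. Both points are covered by the index-equals-period statements above.

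For \eqref{eq:PGLn-surjectivity-criterion}, if $I_S(n)$ is all of $\frac1n\bZ/\bZ$, the condition always holds and every family globalizes. Conversely, suppose $I_S(n)$ is a proper subgroup and choose $c\in\frac1n\bZ/\bZ$ outside it. We pick a finite place $w\notin S$ and set $A_w$ to be the degree-$n$ algebra with invariant $c$, with $A_v$ split for all other $v\notin S$. This is an adelic point violating \eqref{eq:PGLn-globalization-condition}, so it is not in the image. The final sentence of the statement is deferred to Corollary~\ref{cor:characterfree-Brun-BG}: once $\Brun(B\PGL_n)=\Br(k)$ is established, the unramified Brauer--Manin set is the whole adelic space, and approximation reduces to the surjectivity just characterized.
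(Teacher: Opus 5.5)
Your proposal is correct and follows essentially the same route as the paper. Necessity comes from the Albert--Brauer--Hasse--Noether reciprocity sequence; sufficiency comes from completing the invariants at the places of $S$, globalizing the resulting class, and fixing the degree via period $=$ index (your $D\otimes M_{n/d}(k)$ is the paper's $M_{n/d}(D)$); and the surjectivity criterion comes from a single finite place outside $S$. Your explicit remarks that a degree-$n$ algebra over $k_v$ is determined by its Brauer class, and that $n\alpha=0$ by the Hasse principle, simply make precise two steps the paper leaves implicit.
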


\begin{proof}
We use the standard identification of $H^1(F,\PGL_n)$ with degree-$n$
central simple algebras over a completion $F$ of $k$.  For finite $v$, local
period--index and the invariant map give
\[
 H^1(k_v,\PGL_n)\simeq\Br(k_v)[n]
 \simeq\frac1n\bZ/\bZ.
\]
The archimedean images are those listed above; see
\cite[Chapters~4 and~6]{GilleSzamuely}.

Suppose first that a global degree-$n$ algebra $A$ has localization $A_v$ for
all $v\notin S$.  The exact sequence
\begin{equation}\label{eq:global-Brauer-exact-sequence}
 0\longrightarrow\Br(k)
 \longrightarrow\bigoplus_{v\in\Omega_k}\Br(k_v)
 \xrightarrow{\ \sum_v\inv_v\ }\bQ/\bZ
 \longrightarrow0
\end{equation}
gives
\[
 \sum_{v\notin S}\inv_v(A_v)
 =-\sum_{v\in S}\inv_v(A\otimes_k k_v)\in I_S(n),
\]
so \eqref{eq:PGLn-globalization-condition} is necessary.

Conversely, assume \eqref{eq:PGLn-globalization-condition} and choose
\[
 \beta_v\in\Br(k_v)[n],\qquad v\in S,
\]
whose invariant sum is the negative of the sum away from $S$, and put
$\beta_v=[A_v]$ for $v\notin S$.  The resulting finite-support family has
total invariant zero, so exactness produces $\beta\in\Br(k)$ with these
localizations and period dividing $n$.  Since period equals index over a
number field, the division algebra $D$ in this class has degree $d\mid n$,
and $M_{n/d}(D)$ has degree $n$ and the prescribed localizations.  See
\cite[Chapter~XIV]{SerreLocalFields} and
\cite[Chapter~6]{GilleSzamuely} for the reciprocity and period--index results.

Finally, every element of
$\frac1n\bZ/\bZ$ occurs as the invariant sum of an adelic family supported at a
single finite place outside $S$.  Thus all away-from-$S$ families globalize
exactly when $I_S(n)=\frac1n\bZ/\bZ$.
\end{proof}

\begin{corollary}\label{cor:PGLn-archimedean-and-finite}
The following assertions hold.
\begin{enumerate}[(i)]
\item If $S$ contains a finite place, then
\[
 B\PGL_n(k)\longrightarrow B\PGL_n(\bA_k^S)
\]
is surjective.
\item Suppose that $S$ contains no finite place.  Then the localization map is
surjective if and only if $n=2$ and $S$ contains a real place.
\item In particular, if $S=\Omega_k^\infty$, then plain strong approximation
holds for $B\PGL_n$ exactly when $n=2$ and $k$ has a real place.
\end{enumerate}
\end{corollary}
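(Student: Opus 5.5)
All three assertions follow from the surjectivity criterion \eqref{eq:PGLn-surjectivity-criterion} of Theorem~\ref{thm:PGLn-approximation-criterion}. The plan is to compute the subgroup $I_S(n)\subseteq\frac1n\bZ/\bZ$ of \eqref{eq:def-ISn} in each case and decide when it equals all of $\frac1n\bZ/\bZ$. The only inputs are the local images of $\inv_v$ on $\Br(k_v)[n]$ recalled before that theorem. At a finite place the image is all of $\frac1n\bZ/\bZ$. At a complex place it is $0$. At a real place it is $0$ for $n$ odd and $\{0,\tfrac12\}$ for $n$ even.

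For (i), let $w\in S$ be finite. Then the summand indexed by $w$ is already $\frac1n\bZ/\bZ$, since the local invariant gives $\Br(k_w)[n]\simeq\frac1n\bZ/\bZ$. Hence $I_S(n)=\frac1n\bZ/\bZ$, and \eqref{eq:PGLn-surjectivity-criterion} gives surjectivity.

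For (ii), $S$ consists only of archimedean places, so $I_S(n)$ is generated by the real-place summands. Thus $I_S(n)=\{0,\tfrac12\}$ if $n$ is even and $S$ contains a real place, and $I_S(n)=0$ otherwise. Since $n\ge 2$, the group $\frac1n\bZ/\bZ$ is non-zero, so $I_S(n)=0$ never suffices. The subgroup $\{0,\tfrac12\}$ has order $2$ and $\frac1n\bZ/\bZ$ has order $n$, so the two coincide exactly when $n=2$. Combining these cases with \eqref{eq:PGLn-surjectivity-criterion} gives the stated equivalence. Note also that $S$ empty gives $I_S(n)=0$, which is consistent with this.

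For (iii), take $S=\Omega_k^\infty$. This set contains no finite place, and it contains a real place if and only if $k$ has one. Applying (ii), the localization map is surjective exactly when $n=2$ and $k$ has a real place. By Theorem~\ref{thm:BG-adelic-discrete}, $B\PGL_n(\bA_k^S)$ is discrete, and $\PGL_n$ is connected, so plain strong approximation off $S$ is equivalent to surjectivity of the localization map. No step here presents a real obstacle. The only point needing care is the order comparison in (ii), together with remembering that a complex place contributes nothing.
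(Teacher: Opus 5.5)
Your proposal is correct and follows essentially the same route as the paper: you compute $I_S(n)$ from the local invariant images and apply the surjectivity criterion of Theorem~\ref{thm:PGLn-approximation-criterion}. Your explicit appeal to Theorem~\ref{thm:BG-adelic-discrete} (with $\PGL_n$ connected) in (iii), to pass from surjectivity to strong approximation, fills in a step the paper leaves implicit.
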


\begin{proof}
At a finite $v$, one has
$\inv_v(\Br(k_v)[n])=\frac1n\bZ/\bZ$, proving (i).  Without finite places,
$I_S(n)$ is either zero or $\{0,\tfrac12\}$, and equals
$\frac1n\bZ/\bZ$ exactly when $n=2$ and $S$ contains a real place.  This gives
(ii) and (iii).
\end{proof}

\begin{corollary}\label{prop:PGL3-no-plain-approximation}
Plain strong approximation fails for $B\PGL_3$ over $\bQ$ off
$S=\{\infty\}$.
\end{corollary}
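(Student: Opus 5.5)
This follows from Theorem~\ref{thm:PGLn-approximation-criterion} together with Theorem~\ref{thm:BG-adelic-discrete}. Take $k=\bQ$, $n=3$ and $S=\{\infty\}$. The only place in $S$ is the real place, and $\Br(\bR)[3]=0$ because $\Br(\bR)\simeq\bZ/2\bZ$. Hence $I_S(3)=0\neq\frac13\bZ/\bZ$. By the surjectivity criterion \eqref{eq:PGLn-surjectivity-criterion}, the localization map
\[
 B\PGL_3(\bQ)\longrightarrow B\PGL_3(\bA_\bQ^{S})
\]
is not surjective. Corollary~\ref{cor:PGLn-archimedean-and-finite}(ii) gives the same conclusion, since $n=3\neq2$.

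To make the failure concrete, I would choose one prime $p$ and take the adelic family that is trivial at every finite place except $p$. At $p$ it is the class of a cubic division algebra $A_p$ with $\inv_p(A_p)=\frac13$; this class exists because $H^1(\bQ_p,\PGL_3)\simeq\Br(\bQ_p)[3]\simeq\frac13\bZ/\bZ$. The family is trivial almost everywhere, so it is a point of $B\PGL_3(\bA_\bQ^{S})$ by the product description in Theorem~\ref{thm:BG-adelic-discrete}. Its invariant sum away from $S$ is $\frac13\notin I_S(3)=0$, so by \eqref{eq:PGLn-globalization-condition} no global degree-$3$ algebra localizes to it. The direct reason is reciprocity: a global algebra would have real invariant in $\{0,\frac12\}$, and that cannot cancel $\frac13$.

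The last step is to pass from non-surjectivity to the failure of approximation. $\PGL_3$ is connected, so Theorem~\ref{thm:BG-adelic-discrete} applies with $\Sigma=S$ and says that $B\PGL_3(\bA_\bQ^{S})$ is discrete. A subset is then dense only if it is the whole space, so the diagonal image of $B\PGL_3(\bQ)$ is not dense, and plain strong approximation off $\{\infty\}$ fails.

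All of this is routine once the earlier theorem is available. The only points that need care are checking that the archimedean summand of $I_S(3)$ is zero and confirming that the single-place family really is an adelic point.
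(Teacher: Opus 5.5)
Your proposal is correct and takes essentially the same route as the paper. The paper shows $I_{\{\infty\}}(3)=0$, so Theorem~\ref{thm:PGLn-approximation-criterion} gives non-surjectivity, and the discreteness from Theorem~\ref{thm:BG-adelic-discrete} turns this into failure of density; its concrete witness is the same single-prime family with invariant $\frac13$, taking $p=2$.
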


\begin{proof}
Here $I_S(3)=0\ne\frac13\bZ/\bZ$, so
Theorem~\ref{thm:PGLn-approximation-criterion} gives non-surjectivity; by
Theorem~\ref{thm:BG-adelic-discrete}, this is failure of density.

Concretely, take a degree-$3$ division algebra over $\bQ_2$ of invariant $1/3$
and split classes at all other finite places.  This isolated adelic point is
not global because the real Brauer group has no $3$-torsion.
\end{proof}

\begin{corollary}\label{prop:PGL3-finite-place-plain-approximation}
If $S\subset\Omega_\bQ$ contains a finite prime, then
\[
 B\PGL_3(\bQ)\longrightarrow B\PGL_3(\bA_\bQ^S)
\]
is surjective.
\end{corollary}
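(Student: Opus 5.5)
This is a direct specialization of Corollary~\ref{cor:PGLn-archimedean-and-finite}(i) to $k=\bQ$ and $n=3$; I would nonetheless spell it out through the criterion \eqref{eq:PGLn-surjectivity-criterion} of Theorem~\ref{thm:PGLn-approximation-criterion}.

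\textbf{Reduction.} By \eqref{eq:PGLn-surjectivity-criterion}, surjectivity of
\[
 B\PGL_3(\bQ)\longrightarrow B\PGL_3(\bA_\bQ^S)
\]
is equivalent to $I_S(3)=\tfrac13\bZ/\bZ$. So it suffices to show that a single finite prime $p\in S$ already contributes all of $\tfrac13\bZ/\bZ$.

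\textbf{Local computation at $p$.} Local class field theory identifies $\inv_p:\Br(\bQ_p)\xrightarrow{\sim}\bQ/\bZ$. Restricting to $3$-torsion gives
\[
 \inv_p\bigl(\Br(\bQ_p)[3]\bigr)=\tfrac13\bZ/\bZ.
\]
Hence $I_S(3)\supseteq\tfrac13\bZ/\bZ$. The reverse inclusion holds by definition \eqref{eq:def-ISn}, so $I_S(3)=\tfrac13\bZ/\bZ$. The other places of $S$ cannot shrink this: the real place contributes zero because $3$ is odd, and any further finite places contribute subgroups of $\tfrac13\bZ/\bZ$.

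\textbf{Explicit globalization (optional).} Given $(A_v)_{v\notin S}$, set $\beta_p$ to be the class with invariant $-\sum_{v\notin S}\inv_v(A_v)$, and take $\beta_v=0$ at the other places of $S$. Exactness of \eqref{eq:global-Brauer-exact-sequence} produces a global class of period dividing $3$. Period equals index, so this class is represented by a division algebra $D$ of degree $1$ or $3$; we take $M_{3/\deg D}(D)$.

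There is no real obstacle here. The only point needing care is that the local $3$-torsion invariants at a finite place fill out all of $\tfrac13\bZ/\bZ$, and this is immediate from the isomorphism $\inv_p$.
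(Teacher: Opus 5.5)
Your proposal is correct and follows the paper's route: the paper proves this by citing Corollary~\ref{cor:PGLn-archimedean-and-finite}(i) with $n=3$. That corollary's proof is exactly your observation that $\inv_p(\Br(\bQ_p)[3])=\tfrac13\bZ/\bZ$ at a finite $p\in S$, combined with the criterion \eqref{eq:PGLn-surjectivity-criterion}, and your optional explicit globalization unwinds the converse direction of Theorem~\ref{thm:PGLn-approximation-criterion}.
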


\begin{proof}
This is Corollary~\ref{cor:PGLn-archimedean-and-finite}(i) with $n=3$.
\end{proof}

\subsection{The cyclically unramified subgroup}
\label{subsec:cyclically-unramified}

Let $M$ be a Galois lattice over $k$, that is, a finitely generated free
abelian group with a continuous action of $\Gal(\bar{k}/k)$.  Choose a finite
Galois extension $K/k$ through which the action on $M$ factors, and put
$\Gamma:=\Gal(K/k)$.  Define
\begin{equation}\label{eq:def-Sha-cyc}
 \Sha^1_{\mathrm{cyc}}(k,M)
 :=\ker\left(
 H^1(\Gamma,M)
 \longrightarrow
 \prod_{C\subseteq\Gamma\ \mathrm{cyclic}}H^1(C,M)
 \right).
\end{equation}
The following lemma shows that this definition is intrinsic.

\begin{lemma}\label{lem:cyclic-procyclic-description}
A procyclic subgroup of $\Gal(\bar{k}/k)$ means a closed subgroup
topologically generated by one element.  Inflation induces an isomorphism
\[
 \operatorname{inf}:H^1(\Gamma,M)\xrightarrow{\sim}H^1(k,M).
\]
Under this isomorphism, the group in \eqref{eq:def-Sha-cyc} is identified with
\[
 \left\{\alpha\in H^1(k,M):
 \alpha|_P=0\text{ in }H^1(P,M)
 \text{ for every procyclic subgroup }
 P\subseteq\Gal(\bar{k}/k)\right\}.
\]
In particular, $\Sha^1_{\mathrm{cyc}}(k,M)$ is independent of the chosen
finite Galois extension $K/k$ through which the action factors.
\end{lemma}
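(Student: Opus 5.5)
Write $\mathfrak g:=\Gal(\bar{k}/k)$ and $N:=\Gal(\bar{k}/K)$, so that $\Gamma=\mathfrak g/N$ and $N$ acts trivially on $M$. The plan has three steps: first prove the inflation isomorphism, then match the two kernel conditions, then deduce independence of $K$.

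\textbf{The inflation isomorphism.} The inflation--restriction sequence reads
\[
 0\to H^1(\Gamma,M)\xrightarrow{\operatorname{inf}}H^1(k,M)\to H^1(N,M)^{\Gamma}.
\]
Since $N$ acts trivially on $M$, we have $H^1(N,M)=\Hom_{\mathrm{cont}}(N,M)$. This group vanishes: a continuous homomorphism from a profinite group to the discrete group $M$ has finite image, and $M$ is torsion-free. So inflation is an isomorphism.

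\textbf{Matching the kernels.} Fix $\alpha\in H^1(\Gamma,M)$. Let $P=\overline{\langle\sigma\rangle}\subseteq\mathfrak g$ be procyclic, and let $C$ be its image in $\Gamma$; then $C$ is cyclic, generated by $\bar\sigma$.
\begin{enumerate}[(i)]
\item \emph{From cyclic to procyclic.} The restriction $\operatorname{inf}(\alpha)|_P$ factors through inflation from $\alpha|_C\in H^1(C,M)$, via $P\to C$. Hence if $\alpha$ lies in the group \eqref{eq:def-Sha-cyc}, then $\operatorname{inf}(\alpha)|_P=0$.
\item \emph{From procyclic to cyclic.} Every cyclic $C=\langle c\rangle\subseteq\Gamma$ arises this way: choose any lift $\sigma\in\mathfrak g$ of $c$ and take $P=\overline{\langle\sigma\rangle}$, whose image in $\Gamma$ is exactly $C$.
\item \emph{Injectivity of $H^1(C,M)\to H^1(P,M)$.} We need this map to be injective for $P$ as in (ii). Apply inflation--restriction to $P\cap N\trianglelefteq P$, whose quotient is $P/(P\cap N)\cong C$. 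The first term is $H^1(C,M)$, and the next term is $H^1(P\cap N,M)=\Hom_{\mathrm{cont}}(P\cap N,M)$, which is zero by the finiteness and torsion-freeness argument above. So inflation $H^1(C,M)\hookrightarrow H^1(P,M)$ is injective.
\end{enumerate}
Combining these, $\alpha|_C=0$ for all cyclic $C$ if and only if $\operatorname{inf}(\alpha)|_P=0$ for all procyclic $P$.

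\textbf{Independence of $K$.} The description by procyclic subgroups refers only to $H^1(k,M)$ and not to $K$, so independence of the choice of $K$ is immediate.

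The main point needing care is the injectivity in (iii) together with the surjectivity of $P\mapsto C$ in (ii); both are handled by the vanishing of $\Hom_{\mathrm{cont}}$ from a profinite group to a lattice. Everything else is formal.
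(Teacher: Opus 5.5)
Your proposal is correct and is essentially the paper's proof: the same inflation--restriction argument with $\Hom_{\mathrm{cont}}(N,M)=0$ for the isomorphism, naturality of inflation for the cyclic-to-procyclic direction, and a lifted generator together with injectivity of $H^1(C,M)\to H^1(P,M)$ for the converse. One small correction: degree-one inflation is always injective, so the vanishing of $H^1(P\cap N,M)$ in step (iii) is more than you need (it actually gives an isomorphism), and step (ii) needs only the lift of a generator, not that vanishing.
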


\begin{proof}
Put $\Delta:=\Gal(\bar{k}/K)$.  Since the action on $M$ factors through
$\Gamma$, the group $\Delta$ acts trivially and
$H^1(\Delta,M)=\Hom_{\mathrm{cont}}(\Delta,M)=0$: a continuous image of the
profinite group $\Delta$ in the discrete group $M$ is finite, whereas $M$ is
torsion-free.  Inflation--restriction for
$1\to\Delta\to\Gal(\bar{k}/k)\to\Gamma\to1$ therefore gives the asserted
isomorphism.

Let $\pi:\Gal(\bar{k}/k)\to\Gamma$, take $\beta\in H^1(\Gamma,M)$, and put
$\alpha=\operatorname{inf}(\beta)$.  If $\beta$ vanishes on every cyclic
subgroup and $P$ is procyclic, then $C:=\pi(P)$ is cyclic and naturality gives
\[
 \alpha|_P=\operatorname{inf}_{P\to C}(\beta|_C)=0.
\]

Conversely, assume that $\alpha$ vanishes on every procyclic subgroup.  For a
cyclic $C=\langle\gamma\rangle\subseteq\Gamma$, lift $\gamma$ to
$\widetilde\gamma\in\Gal(\bar{k}/k)$ and set
$P:=\overline{\langle\widetilde\gamma\rangle}$.  Then $P$ is procyclic,
$\pi(P)=C$, and
\[
 0=\alpha|_P=\operatorname{inf}_{P\to C}(\beta|_C).
\]
Because $P\cap\Delta$ acts trivially, inflation--restriction for
$1\to P\cap\Delta\to P\to C\to1$ makes
$H^1(C,M)\to H^1(P,M)$ injective.  Hence $\beta|_C=0$.  The two vanishing
conditions are therefore equivalent, and the procyclic one is intrinsic to
the $\Gal(\bar{k}/k)$-module $M$.
\end{proof}

\subsection{Cyclic norm-one tori}\label{subsec:norm-one-torus}

Let $K/k$ be a cyclic extension of degree $m$, write
\[
 \Gamma:=\Gal(K/k)\simeq C_m,
 \qquad
 T:=\Res^1_{K/k}\Gm,
\]
and, for every place $v$ of $k$, choose a place $w$ of $K$ above $v$.  Let
\[
 D_w\subseteq\Gamma,
 \qquad
 d_v:=|D_w|=[K_w:k_v]
\]
be the decomposition subgroup and its order.  Since $\Gamma$ is abelian,
$d_v$ is independent of the choice of $w$.  This convention includes the
archimedean places: $d_v=2$ when a real place becomes complex, and $d_v=1$ at
a split real or complex place.

\begin{theorem}
\label{thm:cyclic-norm-one-criterion}
For every finite set $S\subset\Omega_k$,
\begin{equation}\label{eq:norm-one-surjectivity-criterion}
 BT(k)\longrightarrow BT(\bA_k^S)
 \quad\text{is surjective}
 \quad\Longleftrightarrow\quad
 \lcm_{v\in S}(d_v)=m,
\end{equation}
where the least common multiple of the empty family is understood to be $1$.
Equivalently, the decomposition subgroups at the places in $S$ generate
$\Gamma$.
\end{theorem}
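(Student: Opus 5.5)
We compute everything explicitly.  From $1\to T\to\Res_{K/k}\Gm\to\Gm\to1$ (the norm map) and Hilbert~90 for $\Res_{K/k}\Gm$, we get
\[
 H^1(F,T)\simeq F^\times/N_{K\otimes_kF/F}\bigl((K\otimes_kF)^\times\bigr)
\]
for $F=k$ or $F=k_v$.  Locally, $K\otimes_kk_v\simeq K_w^{m/d_v}$, so the norm group is $N_{K_w/k_v}K_w^\times$.  Local class field theory for the cyclic extension $K_w/k_v$ (including the archimedean cases) then gives
\[
 H^1(k_v,T)\simeq k_v^\times/N K_w^\times\simeq D_w\simeq\bZ/d_v\bZ,
\]
and we regard this group inside $\Gamma$ via the local Artin map.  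Globally, $H^1(k,T)=k^\times/N_{K/k}K^\times$.

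The localization map sends $a\in k^\times$ to the family of local norm-residue symbols $(a,K/k)_v\in D_v\subseteq\Gamma$.  We invoke two global inputs.  First, global reciprocity (Artin reciprocity): $\prod_v(a,K/k)_v=1$ in $\Gamma$.  Second, the existence part of the description of the idèle class group: a finite-support family $(\sigma_v)\in\bigoplus_vD_v$ comes from some $a\in k^\times$ if and only if $\prod_v\sigma_v=1$.  This follows from
\[
 \bA_k^\times/k^\times N\bA_K^\times\simeq\Gamma
\]
via the sum of the local maps, together with the fact that an idèle with trivial symbol at every place lies in $N\bA_K^\times$ locally and is a unit norm at almost all places.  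Hasse's norm theorem is not needed, because we only need existence and not injectivity.

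With these inputs, a family $(\xi_v)_{v\notin S}$, trivial for almost all $v$, with $\xi_v\in D_v$, globalizes if and only if we can choose $\xi_v\in D_v$ for $v\in S$ such that $\prod_{\text{all }v}\xi_v=1$.  This holds if and only if
\[
 \prod_{v\notin S}\xi_v\in\langle D_v:v\in S\rangle.
\]
So surjectivity is equivalent to the following: the subgroup generated by the $D_v$ for $v\in S$ contains every product $\prod_{v\notin S}\xi_v$ that can occur.

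If the $D_v$ for $v\in S$ generate $\Gamma$, surjectivity is immediate.  For the converse, we use Chebotarev: a generator $\sigma$ of $\Gamma$ is the Frobenius at infinitely many unramified primes $v\notin S$, and at such a prime $D_v=\Gamma$.  The family supported at that single $v$ with $\xi_v=\sigma$ then forces $\sigma$ to lie in $\langle D_v:v\in S\rangle$.  Finally, since $\Gamma$ is cyclic of order $m$, the subgroups $D_v$ are the unique subgroups of order $d_v$, and they generate $\Gamma$ exactly when $\lcm_{v\in S}(d_v)=m$.  The empty case gives the trivial subgroup, consistent with the convention $\lcm=1$.

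The main obstacle is the existence statement in the second global input: we must show that a finite-support family with trivial product actually comes from a global element $a\in k^\times$, i.e.\ control $k^\times$ modulo norms through the idèle class group.  The remaining steps are local class field theory and bookkeeping.
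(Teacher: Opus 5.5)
Your proposal is correct and follows essentially the same route as the paper. Both arguments identify $H^1(k_v,T)$ with $D_w$ via Hilbert~90 and local reciprocity, and both use the idelic reciprocity sequence $k^\times/N_{K/k}(K^\times)\to\bigoplus_v k_v^\times/N_{K_w/k_v}(K_w^\times)\to\Gamma\to0$ to reduce globalization to cancellation inside $\langle D_v: v\in S\rangle$; both then get the converse from Chebotarev, using an unramified place outside $S$ with full decomposition group. Your derivation of the existence half of that sequence from $\bA_k^\times/k^\times N\bA_K^\times\simeq\Gamma$ just unpacks what the paper cites from Neukirch, and your choice of a generator at the Chebotarev place is the paper's choice of a class outside the subgroup generated at $S$.
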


\begin{proof}
The character lattice of $T$ fits into the exact sequence of
$\Gamma$-lattices
\begin{equation}\label{eq:cyclic-norm-one-lattice}
 0\longrightarrow\bZ
 \longrightarrow\bZ[\Gamma]
 \longrightarrow\widehat T
 \longrightarrow0,
\end{equation}
where $1$ maps to the norm element $\sum_{\gamma\in\Gamma}\gamma$.  Since
$\bZ[\Gamma]$ is induced, its positive-degree cohomology vanishes; the standard
descent sequence therefore gives
\begin{equation}\label{eq:Pic-cyclic-norm-one}
 \Pic(T)\simeq H^1(k,\widehat T)
 \simeq H^1(\Gamma,\widehat T)
 \simeq H^2(\Gamma,\bZ)
 \simeq\bZ/m\bZ;
\end{equation}
compare \cite[Proposition~6.10]{Sansuc} and
\cite[Chapter~I, \S2]{SerreGaloisCohomology}.

For each $v$, the norm sequence and Hilbert's theorem~90 give
\[
 H^1(k_v,T)
 \simeq
 k_v^\times/N\bigl((K\otimes_k k_v)^\times\bigr)
 \simeq
 k_v^\times/N_{K_w/k_v}(K_w^\times).
\]
Local reciprocity identifies this quotient with $D_w$, hence as cyclic of
order $d_v$ \cite[Chapter~V, \S1, Theorem~1.3]{Neukirch}.  Compatibility of
the global and local norm-residue maps and the product formula give the exact
sequence
\begin{equation}\label{eq:cyclic-norm-reciprocity-sequence}
 k^\times/N_{K/k}(K^\times)
 \longrightarrow
 \bigoplus_{v\in\Omega_k}
 k_v^\times/N_{K_w/k_v}(K_w^\times)
 \xrightarrow{\ \sum_v\operatorname{rec}_v\ }
 \Gamma
 \longrightarrow0,
\end{equation}
where each local quotient maps isomorphically onto $D_w\subseteq\Gamma$.
This is the idelic reciprocity sequence; see
\cite[Chapter~VI, \S5, 5.5--5.7]{Neukirch}.

The reciprocity image of an away-from-$S$ family can be cancelled at $S$
exactly when it lies in the subgroup generated by the $D_w$ for $v\in S$.
This subgroup has order $\lcm_{v\in S}(d_v)$, so if that number is $m$,
every family can be completed into the kernel and then globalized by
exactness.

If the least common multiple is smaller than $m$, Chebotarev gives a finite
unramified $u\notin S$ with $D_u=\Gamma$.  A class at $u$ whose reciprocity
image lies outside the subgroup generated at $S$, together with trivial
classes elsewhere outside $S$, cannot be cancelled at $S$ and is not global.
\end{proof}

\begin{remark}\label{rem:norm-one-unramified}
The corresponding unramified Brauer statement follows from the computations in
Section~\ref{sec:classifying}.  Indeed, the Galois action on $\widehat T$
factors through the cyclic group $\Gamma$.  In the definition of
$\Sha^1_{\mathrm{cyc}}(k,\widehat T)$, one of the cyclic subgroups is $\Gamma$
itself, so
\[
 \Sha^1_{\mathrm{cyc}}(k,\widehat T)=0.
\]
Theorem~\ref{thm:Brun-BG-Sha} therefore gives
\begin{equation}\label{eq:norm-one-unramified-constant}
 \Brun(BT)=\Br(k).
\end{equation}
Constant classes impose no Brauer--Manin restriction.  Thus plain strong
approximation and strong approximation with respect to the unramified Brauer
group are equivalent for $BT$; by
Theorem~\ref{thm:cyclic-norm-one-criterion}, they hold precisely under condition
\eqref{eq:norm-one-surjectivity-criterion}.
\end{remark}

\begin{corollary}\label{prop:norm-one-torus-no-plain-approximation}
Let
\[
 K=\bQ(\sqrt2),
 \qquad
 T=\Res^1_{K/\bQ}\Gm.
\]
For $S_0=\{\infty,7\}$ the localization map
\[
 BT(\bQ)\longrightarrow BT(\bA_\bQ^{S_0})
\]
is not surjective.  If the inert prime $3$ is added, so that
$S=\{\infty,3,7\}$, then
\[
 BT(\bQ)\longrightarrow BT(\bA_\bQ^S)
\]
is surjective.
\end{corollary}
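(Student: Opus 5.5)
This is a direct application of Theorem~\ref{thm:cyclic-norm-one-criterion} with $k=\bQ$, $K=\bQ(\sqrt2)$ and $m=2$. The plan is to compute the local degrees $d_v$ for the relevant places and then check whether their least common multiple equals $2$.

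First, the archimedean place. Since $\bQ(\sqrt2)$ is totally real, $\infty$ splits, so $d_\infty=1$.

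Next, the prime $7$. It is unramified because the discriminant is $8$. It splits because $2\equiv 3^2=9\pmod 7$, so $2$ is a square modulo $7$; equivalently, $\left(\tfrac{2}{7}\right)=1$ since $7\equiv -1\pmod 8$. Hence $d_7=1$.

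Then the prime $3$. Here $\left(\tfrac{2}{3}\right)=-1$, since the squares modulo $3$ are only $0$ and $1$. So $3$ is inert and $d_3=2$.

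For $S_0=\{\infty,7\}$ this gives $\lcm(d_\infty,d_7)=1\neq 2$. The criterion \eqref{eq:norm-one-surjectivity-criterion} therefore shows the localization map is not surjective. An explicit witness comes from the second half of the proof of Theorem~\ref{thm:cyclic-norm-one-criterion}: take the non-trivial class at the inert prime $3\notin S_0$, namely $3$ itself, which is not a local norm from $\bQ_3(\sqrt2)$ because it is a uniformizer of an unramified quadratic extension. Put trivial classes at every other place outside $S_0$. Its reciprocity image is the generator of $\Gamma$, and it cannot be cancelled at $S_0$ because the decomposition groups there are trivial.

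For $S=\{\infty,3,7\}$ we get $\lcm(1,2,1)=2=m$, so surjectivity follows from the same theorem.

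The only real work is the quadratic-residue bookkeeping. I would double-check that neither $3$ nor $7$ is ramified (only $2$ ramifies in $\bQ(\sqrt2)$), so that the decomposition-group computation via Frobenius is legitimate.
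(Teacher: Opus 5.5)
Your proposal is correct and follows the paper's proof exactly: compute $d_\infty=d_7=1$ and $d_3=2$, then apply Theorem~\ref{thm:cyclic-norm-one-criterion}. Your quadratic-residue checks, and your explicit witness given by the class of $3$ in $\bQ_3^\times/N_{\bQ_3(\sqrt2)/\bQ_3}(\bQ_3(\sqrt2)^\times)$, spell out details the paper leaves implicit.
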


\begin{proof}
Both $\infty$ and $7$ split, so
$\lcm(d_\infty,d_7)=1<2$, whereas $3$ is inert and $d_3=2$.  Apply
Theorem~\ref{thm:cyclic-norm-one-criterion}.
\end{proof}

\section{Harari's formal lemma and local evaluation}
\label{sec:formal-lemma}

This section develops Harari's formal lemma on algebraic stacks.  We
first record that Brauer--Manin sums on finite type algebraic stacks have
finite support.  After recalling the finite evaluation criterion and a
 finite-character lemma, we prove evaluation-generation and formal
modification, introduce Brauer-detecting adelic atlases, compute unramified
classes by boundary residues, and conclude with an approximation consequence
and an   example.

\begin{lemma}\label{lem:BM-finite-support}
Let $\cX$ be a finite type algebraic stack over a number field $k$.  For every
adelic point $x=(x_v)_v\in\cX(\bA_k)$ and every $b\in\Br(\cX)$, one has
\[
 \inv_v\bigl(b(x_v)\bigr)=0
\]
for all but finitely many places $v$.  Consequently, the Brauer--Manin pairing
\[
 \cX(\bA_k)\times\Br(\cX)\longrightarrow\bQ/\bZ,
 \qquad
 (x,b)\longmapsto\sum_{v\in\Omega_k}\inv_v\bigl(b(x_v)\bigr),
\]
is well-defined.
\end{lemma}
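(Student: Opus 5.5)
Spreading out will do most of the work: both the stack and the Brauer class extend over a ring of $S$-integers, and integral points then pair into the Brauer group of $\cO_v$, which is zero. Concretely, fix $x=(x_v)_v\in\cX(\bA_k)$ and $b\in\Br(\cX)$. By Theorem~\ref{thm:stack-product-formula}, or directly from the fact that $\bA_k=\varinjlim_T\bA_k(T)$ and $\cX$ is limit preserving, I first choose a finite set $S\supset\Omega_k^\infty$ and a finite presentation integral model $\scX/\cO_{k,S}$ of $\cX$. After enlarging $S$ to some finite $T$, the point $x$ comes from a point of $\scX(\bA_k(T))$, so for every $v\notin T$ the component $x_v$ is the image of an object $\tilde x_v\in\scX(\cO_v)$.

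Next I spread out the class $b$. The Brauer group I intend to use is the cohomological one, $H^2(\cX,\Gm)$, or its torsion. Étale cohomology commutes with filtered limits of qcqs stacks with affine transition maps, giving
\[
 \varinjlim_{T}H^2(\scX_{\cO_{k,T}},\Gm)\xrightarrow{\sim}H^2(\cX,\Gm).
\]
For algebraic stacks I would cite the limit theorem for cohomology of algebraic stacks in the Stacks Project; alternatively I would reduce to the scheme case through a smooth hypercover by affine schemes of finite presentation, which also spreads out. After enlarging $T$ once more, there is therefore a class $\tilde b\in H^2(\scX_{\cO_{k,T}},\Gm)$ whose restriction to the generic fibre is $b$.

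Then, for $v\notin T$ finite, functoriality of pullback gives
\[
 b(x_v)=\tilde x_v^*(\tilde b)\big|_{k_v},\qquad \tilde x_v^*(\tilde b)\in\Br(\cO_v).
\]
The group $\Br(\cO_v)$ vanishes, since $\cO_v$ is a Henselian local ring with finite residue field and $\Br(\mathbf F_q)=0$. Hence $\inv_v(b(x_v))=0$ for every $v\notin T$. Since $T$ is finite, the sum defining the pairing has only finitely many nonzero terms, and the pairing is well defined.

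I expect the main obstacle to be the spreading-out step for $b$. One has to be sure that the limit formalism for $H^2(-,\Gm)$ applies to Artin stacks of finite presentation, and that the Brauer group used in the paper is covered by it. If $\Br(\cX)$ is defined as $H^2(\cX,\Gm)_{\mathrm{tors}}$, the comparison above suffices, because a torsion class spreads out to a class that may not itself be torsion but still pulls back into $\Br(\cO_v)=H^2(\cO_v,\Gm)=0$.
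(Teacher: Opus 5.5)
Your proof is correct. The paper gives no argument of its own here; it cites \cite[Proposition~4.4]{DhillonClassifying} and \cite[\S4.2]{LvWu}. What you have written is a self-contained version of the standard spreading-out argument behind such statements: spread $\cX$ and $b$ over $\cO_{k,T}$, observe that almost all components of an adelic point are $\cO_v$-integral, and use $H^2(\cO_v,\Gm)=0$.

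Three remarks on your route.

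First, use the direct limit-preservation argument rather than Theorem~\ref{thm:stack-product-formula}. That theorem requires an adelically liftable model, which the lemma does not assume. Limit preservation of a finite presentation model is all you need, and this keeps your proof at the full generality of the statement. This relies on the usual convention that algebraic stacks are quasi-separated, so that "finite type over $k$" means finite presentation and the model spreads out.

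Second, the only non-formal input is the limit isomorphism $\varinjlim_T H^2(\scX_{\cO_{k,T}},\Gm)\simeq H^2(\cX,\Gm)$ for stacks, and your hypercover reduction does establish it. For an affine smooth atlas $U\to\scX$, the \v{C}ech nerve consists of quasi-compact quasi-separated algebraic spaces of finite presentation; this again uses quasi-separatedness of $\scX$. The limit theorem for $\Gm$-cohomology then applies termwise, and the first-quadrant descent spectral sequences pass to the filtered colimit because filtered colimits are exact.

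Third, your worry that the spread-out class need not be torsion is harmless. Since $\cO_v$ is regular, $H^2(\cO_v,\Gm)$ injects into $\Br(k_v)$, and its image consists of unramified classes. These are zero because the residue map $\Br(k_v)\to H^1(\kappa(v),\bQ/\bZ)$ is an isomorphism.
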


\begin{proof}
This is \cite[Proposition~4.4]{DhillonClassifying}.  See also
\cite[\S4.2, especially (4.5) and the paragraph immediately following it,
p.~15]{LvWu}.
\end{proof}

\subsection{The finite evaluation criterion}

Let $\cZ$ be an algebraic stack over a field.  We use the cohomological Brauer
group
\[
 \Br(\cZ):=H^2_{\mathrm{\acute{e}t}}(\cZ,\Gm)_{\mathrm{tors}}.
\]
If $K/k$ is a field extension, $z\in\cZ(K)$, and $b\in\Br(\cZ)$, evaluation is
defined by
\[
 b(z):=z^*b\in\Br(K).
\]
When $k$ is a number field and $v\in\Omega_k$, write
\[
 \inv_v:\Br(k_v)\longrightarrow\bQ/\bZ
\]
for the local invariant map.

\begin{definition}\label{def:unramified-brauer-stack}
Let $\cZ$ be a smooth finite type algebraic stack over a field $k$.  Let
$b\in\Br(\cZ)$.  If $R$ is a DVR over $k$, with fraction field $K$, and
$z:\Spec K\to\cZ$ is a $K$-point, we say that $b$ is \emph{unramified at}
$(R,K,z)$ if
\[
 z^*b\in\operatorname{im}\bigl(\Br(R)\longrightarrow\Br(K)\bigr).
\]
The class $b$ is \emph{unramified} if it is unramified at every such triple.
The \emph{unramified Brauer group} is
\[
 \Brun(\cZ):=\{b\in\Br(\cZ):b\text{ is unramified}\}.
\]
\end{definition}

The definition is functorial: pullback along a morphism of smooth finite type
stacks preserves unramified classes.  For smooth geometrically integral
varieties it recovers the usual unramified Brauer group
\cite[Lemma~5.13]{LoughranSantens}. 

\begin{definition}\label{def:finite-evaluation-criterion}
Let $\cX$ be a smooth finite type algebraic stack over a number field $k$.  We
say that $\cX$ satisfies the \emph{finite evaluation criterion} if, for every
$b\in\Br(\cX)$, the following conditions are equivalent:
\begin{enumerate}[(i)]
\item $b\in\Brun(\cX)$;
\item there is a finite set $\Sigma_b\subset\Omega_k$ such that
\[
 \inv_v\bigl(b(x_v)\bigr)=0
\]
for every $v\notin\Sigma_b$ and every $x_v\in\cX(k_v)$.
\end{enumerate}
\end{definition}

\begin{theorem}[Loughran--Santens]\label{thm:LS-evaluation-criterion}
Every smooth finite type Deligne--Mumford stack over a number field satisfies
the finite evaluation criterion.
\end{theorem}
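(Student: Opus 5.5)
The plan is to deduce the statement from the corresponding result of Loughran--Santens for smooth Deligne--Mumford stacks, after matching their notion of an unramified class with Definition~\ref{def:unramified-brauer-stack}. The proof has two directions, and they use quite different inputs.

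For (i)$\Rightarrow$(ii), let $b\in\Brun(\cX)$. First spread $\cX$ out to a smooth finite type Deligne--Mumford model $\scX$ over $\cO_{k,S}$. Then enlarge $S$ so that $b$ extends to a class $\tilde b\in\Br(\scX)$. That this is possible for unramified classes is the purity statement of Loughran--Santens: they show that the unramified classes are exactly those which extend over a good model after discarding finitely many primes. I would cite their description of $\Brun$ via residues along divisors of a compactification or model, as in \cite[Lemma~5.13]{LoughranSantens} and the surrounding results.

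Next, take $v\notin S$ and $x_v\in\cX(k_v)$. Since $\cX$ is Deligne--Mumford and $\scX$ may be taken proper over a tame locus, or by their valuative argument after a finite extension, the point $x_v$ extends to an $\cO_v$-point of $\scX$. Alternatively, one uses Definition~\ref{def:unramified-brauer-stack} directly with $R=\cO_v$, $K=k_v$ and $z=x_v$. In either case $b(x_v)$ lies in the image of $\Br(\cO_v)=0$, so $\inv_v b(x_v)=0$. The direct route is cleanest: unramifiedness at the triple $(\cO_v,k_v,x_v)$ immediately gives $b(x_v)\in\operatorname{im}\Br(\cO_v)=0$ for \emph{every} finite $v$. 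Thus $\Sigma_b=\Omega_k^\infty$ works, and no model is needed. I will check that $\cO_v$ is a DVR over $k$ in the required sense; it is not a $k$-algebra. If the definition demands a $k$-algebra, I would replace $\cO_v$ by a suitable DVR $k$-algebra dominating the situation, such as the henselisation of $k[t]$ at a point, and argue via the model.

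For (ii)$\Rightarrow$(i), the substantive input is Harari's ramification lemma in the Loughran--Santens form for DM stacks. Suppose $b$ is ramified. Then there is a smooth compactification, or a divisor over the coarse space or a finite cover, along which $b$ has a nonzero residue. Their result produces infinitely many places $v$ and points $x_v\in\cX(k_v)$ with $\inv_v b(x_v)\neq0$, which contradicts (ii). The reduction goes as follows. Choose a finite étale atlas or a generically finite cover by a smooth variety $Y\to\cX$, which exists for DM stacks after passing to a dense open substack; such a cover exists by Laumon--Moret-Bailly and Kresch--Vistoli. A ramified class on $\cX$ pulls back to a class ramified along some divisor of $Y$; this is where their stacky residue formalism is used. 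Then apply Harari's lemma on $Y$ and push the resulting points forward.

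The main obstacle is this converse. Ramification must be transported through the DM structure, since residues of the pullback may vanish when inertia is nontrivial. Ultimately I would invoke the Loughran--Santens theorem directly, and only verify that their definition of unramified agrees with Definition~\ref{def:unramified-brauer-stack}.
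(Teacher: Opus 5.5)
Your eventual fallback is to cite \cite[Theorem~5.23]{LoughranSantens} after checking that Definition~\ref{def:unramified-brauer-stack} is their valuative definition. That is exactly the paper's proof, which consists of that citation alone. The arguments you sketch around it, however, do not work and should be dropped; the one you call cleanest is false.

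In Definition~\ref{def:unramified-brauer-stack} the ring $R$ is a DVR \emph{over $k$}, so its valuation is trivial on $k$. The triple $(\cO_v,k_v,x_v)$ is therefore not admissible. No DVR over $k$, including a henselised local ring of $k[t]$, sees the $v$-adic valuation. Your conclusion that $\Sigma_b=\Omega_k^\infty$ works fails already for $\cX=\Spec\bQ$ and $b$ the class of $(-1,-1)$. This class is unramified, as every constant class is, yet $\inv_2 b=\tfrac12$. Passing from the geometric condition (i) to almost all places genuinely requires spreading out.

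Your model route and your cover route both break on stacky phenomena.
\begin{itemize}
\item A proper DM model need not extend $k_v$-points to $\cO_v$-points. For $B\mu_n$ over $\cO_{k,S}$ with $n$ invertible, the $\cO_v$-points give $\cO_v^\times/\cO_v^{\times n}$. This misses the class of a uniformizer in $k_v^\times/k_v^{\times n}$. Extending only after a finite extension $L_w/k_v$ shows merely that $[L_w:k_v]\inv_v b(x_v)=0$, not that the invariant vanishes.
\item For (ii)$\Rightarrow$(i), pulling back to a cover can destroy ramification. Take $\chi\in H^1(k,\bZ/n\bZ)$ non-zero and $t\in H^1(B\mu_n,\mu_n)$ the universal class. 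Then $b=\chi\cup t$ is ramified: at $[s]\in H^1(k(s),\mu_n)$ it evaluates to $(\chi,s)$, which has non-zero residue at $s=0$. Yet $b$ pulls back to $0$ along the finite \'etale atlas $\Spec k\to B\mu_n$.
\end{itemize}
The ramification of such a class is visible only at points, such as ramified torsors, that do not lift to the atlas over the same field. Handling these non-liftable points is the actual content of the Loughran--Santens theorem, and your sketch does not supply it. The proposal is therefore sound only as the bare citation, which is what the paper does.
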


\begin{proof}
This is \cite[Theorem~5.23]{LoughranSantens}.
\end{proof}

The following is an abstraction of the group theoretic components of
\cite[proof of Corollaire~2.6.1, p.~234]{Harari}.

\begin{lemma}\label{lem:finite-character-selection}
Let $A$ be a finite abelian group.  For each $i$ in an infinite set $I$, let
$S_i\subseteq A$.  Suppose that, for every finite subset $F\subset I$, the
union
\[
 \bigcup_{i\in I\setminus F}S_i
\]
generates $A$.  Then, for every $a\in A$ and every finite $F\subset I$, there
are pairwise distinct indices
\[
 i_1,\ldots,i_r\in I\setminus F
\]
and elements $s_j\in S_{i_j}$ such that
\[
 a=s_1+\cdots+s_r,
 \qquad
 r\leq |A|-1.
\]
\end{lemma}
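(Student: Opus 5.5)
We argue by a greedy chain of subgroups inside the finite group $A$, choosing a new index at each step. Since $A$ is finite, it is natural to first arrange that $0\in S_i$ is irrelevant: we may have $a=0$, in which case the empty sum ($r=0$) works. So assume $a\neq 0$.

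The key step is the following claim. For every finite $F'\subset I$, every element of $A$ is a sum $s_1+\cdots+s_r$ with $s_j\in S_{i_j}$, the indices $i_j\in I\setminus F'$ pairwise distinct, and $r\le|A|-1$. We build an increasing chain of subsets of $A$ as follows. Put $R_0=\{0\}$. For $n\ge0$, let $R_{n}$ denote the set of elements expressible as such sums using at most $n$ distinct indices outside $F'$. Suppose $R_n\ne A$.

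We claim $R_{n+1}\supsetneq R_n$. This is the main obstacle, because adding a summand must not reuse an index. For each $x\in R_n$, fix one representation, with index set $J_x$ of size at most $n$. Let $F''=F'\cup\bigcup_{x\in R_n}J_x$, which is finite because $R_n$ is finite. By hypothesis, $\bigcup_{i\in I\setminus F''}S_i$ generates $A$. Since $A$ is finite, the subgroup generated by a set equals the set of finite sums of its elements (inverses are multiples). If $x+s\in R_n$ for every $x\in R_n$ and every such $s$, then $R_n$, which contains $0$, would be closed under adding generators, and hence would equal $A$. This contradicts $R_n\ne A$. So some $x\in R_n$ and $s\in S_i$ with $i\notin F''$ satisfy $x+s\notin R_n$. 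Since $i\notin J_x\cup F'$, the index is new, so $x+s\in R_{n+1}\setminus R_n$.

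Hence $|R_n|\ge n+1$ as long as $R_{n-1}\neq A$, and therefore $R_{|A|-1}=A$. Applying this with $F'=F$ to the given $a$ yields pairwise distinct indices $i_1,\dots,i_r\notin F$ and elements $s_j\in S_{i_j}$ with $a=\sum_j s_j$ and $r\le |A|-1$. The delicate point, as noted, is enlarging the excluded set at each stage so that the new index avoids every index already used. The finiteness of $A$, and hence of $R_n$, is exactly what keeps $F''$ finite.
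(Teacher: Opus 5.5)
Your argument is correct, and it takes a different route from the paper's.

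The paper proceeds in three steps:
\begin{enumerate}[(1)]
\item It first isolates the set $P$ of elements of $A$ that lie in infinitely many $S_i$. It shows $P$ generates $A$: each element outside $P$ occurs in only finitely many $S_i$, so discarding those finitely many indices leaves a union contained in $P$.
\item It then quotes Klopsch--Lev to write any $a$ as a sum of at most $\sum_\ell(m_\ell-1)\le|A|-1$ elements of $P$, with repetition allowed.
\item Finally, it assigns the summands to pairwise distinct indices outside $F$, using that every element of $P$ recurs infinitely often.
\end{enumerate}

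You never isolate $P$. Instead, you run the standard ball-growth argument (a Cayley-digraph diameter bound) directly on the sets $R_n$ of sums with distinct indices. You handle distinctness by enlarging the excluded set to $F''$ at each stage, and finiteness of $R_n$ keeps $F''$ finite. The strict growth $R_n\subsetneq R_{n+1}$ then gives $R_{|A|-1}=A$.

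What each route buys:
\begin{itemize}
\item Your route is self-contained. It shows that the external citation is unnecessary for the stated bound $|A|-1$.
\item The paper's route separates the index-distinctness issue, which is settled once and for all by passing to $P$, from the additive combinatorics. Via Klopsch--Lev it actually yields the sharper bound $\sum_\ell(m_\ell-1)$, which the lemma then coarsens to $|A|-1$.
\end{itemize}

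Your preliminary case split on $a=0$ is harmless but unnecessary, since $0\in R_0$ already.
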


\begin{proof}
The assertion is immediate if $A=\{0\}$.  Otherwise, put
\[
 P:=\{s\in A:\#\{i\in I:s\in S_i\}=\infty\}.
\]
Then $P$ generates $A$.  Indeed, the set
\[
 F_0:=\bigcup_{s\in A\setminus P}\{i\in I:s\in S_i\}
\]
is finite and $\bigcup_{i\in I\setminus F_0}S_i\subseteq P$, so the hypothesis
applied to $F_0$ gives $\langle P\rangle=A$.

Write $A\simeq\bigoplus_{\ell=1}^t\bZ/m_\ell\bZ$ with
$1<m_1\mid\cdots\mid m_t$.  By \cite[Theorem~2.1, p.~25]{KlopschLev}, every
element of $A$ is a sum of at most
\[
 \sum_{\ell=1}^t(m_\ell-1)
 \leq \prod_{\ell=1}^t m_\ell-1
 =|A|-1
\]
elements of $P$, with repetitions allowed.  Since every element of $P$
belongs to infinitely many of the sets $S_i$, the summands can be assigned
successively to pairwise distinct indices outside $F$.
\end{proof}

For a finite abelian group $A$, put
\[
 A^\vee:=\Hom(A,\bQ/\bZ).
\]
Finite abelian group duality identifies $A$ with $A^{\vee\vee}$.

\subsection{Evaluation-generation and formal modification}

\begin{theorem}\label{thm:abstract-evaluation-formal}
Let $k$ be a number field and let $\cX$ be a smooth finite type algebraic stack
over $k$.  Assume that $\cX$ satisfies the finite evaluation criterion.
Let $B\subseteq\Br(\cX)$ be a finite subgroup and put
\[
 B_{\mathrm{un}}:=B\cap\Brun(\cX),
 \qquad
 \Ann(B_{\mathrm{un}}):=
 \{\chi\in B^\vee:\chi|_{B_{\mathrm{un}}}=0\}.
\]
For $v\in\Omega_k$ and $x_v\in\cX(k_v)$, define
\[
 \ev_{v,x_v}:B\longrightarrow\bQ/\bZ,
 \qquad
 b\longmapsto\inv_v\bigl(b(x_v)\bigr).
\]
Then the following statements hold.

\begin{enumerate}[(a)]
\item There is a finite set $\Sigma_B\subset\Omega_k$ such that, for every
finite $F\supseteq\Sigma_B$,
\begin{equation}\label{eq:abstract-evaluation-generation}
 \left\langle
  \ev_{v,x_v}:v\notin F,\ x_v\in\cX(k_v)
 \right\rangle
 =\Ann(B_{\mathrm{un}}).
\end{equation}

\item Let
\[
 x=(x_v)_v\in\cX(\bA_k)^{B_{\mathrm{un}}},
\]
and let $\Sigma_0\subset\Omega_k$ be finite.  There are pairwise distinct
places
\[
 v_1,\ldots,v_r\notin\Sigma_0
\]
and points $z_i\in\cX(k_{v_i})$ such that the adelic point $x'=(x'_v)_v$
defined by
\[
 x'_v=
 \begin{cases}
  z_i,&v=v_i,\\
  x_v,&v\notin\{v_1,\ldots,v_r\},
 \end{cases}
\]
is orthogonal to all of $B$.  The places $v_1,\ldots,v_r$ at which $x$ is
modified can be chosen outside any prescribed finite set, and their number
may be bounded by
\begin{equation}\label{eq:quantitative-auxiliary-bound}
 r\leq |B/B_{\mathrm{un}}|-1.
\end{equation}
\end{enumerate}
\end{theorem}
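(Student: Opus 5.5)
Part (a) reduces to a statement about finitely many Brauer classes, using the finite evaluation criterion. Part (b) then follows from (a) by Lemma~\ref{lem:finite-character-selection}.

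For (a), first show that every evaluation character at a place outside a suitable finite set annihilates $B_{\mathrm{un}}$. The group $B$ is finite, so $B_{\mathrm{un}}$ is finite. For each $b\in B_{\mathrm{un}}$ the finite evaluation criterion, direction (i)$\Rightarrow$(ii), supplies a finite set $\Sigma_b$. Let $\Sigma'$ be the union of these sets. For $v\notin\Sigma'$ every $\ev_{v,x_v}$ then lies in $\Ann(B_{\mathrm{un}})$.

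Next, for each finite $F$ let $W_F\subseteq\Ann(B_{\mathrm{un}})$ be the subgroup generated by the characters $\ev_{v,x_v}$ with $v\notin F$ and $F\supseteq\Sigma'$. These subgroups decrease as $F$ grows, and $\Ann(B_{\mathrm{un}})$ is finite, so they stabilize at some subgroup $W$ attained at a finite $\Sigma_B\supseteq\Sigma'$. We must show $W=\Ann(B_{\mathrm{un}})$. By finite duality this is equivalent to $W^\perp=B_{\mathrm{un}}$, where $W^\perp\subseteq B$ is the subgroup killed by $W$. Take $b\in W^\perp$. Then $\inv_v(b(x_v))=0$ for every $v\notin\Sigma_B$ and every $x_v\in\cX(k_v)$. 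The reverse direction (ii)$\Rightarrow$(i) of the criterion gives $b\in\Brun(\cX)$, hence $b\in B_{\mathrm{un}}$. The reverse inclusion $B_{\mathrm{un}}\subseteq W^\perp$ holds because $W\subseteq\Ann(B_{\mathrm{un}})$. Stabilization gives the equality for every $F\supseteq\Sigma_B$.

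The only subtle point in (a) is exactly this use of stabilization. It is what lets us apply the criterion uniformly in $F$, rather than separately for a single $F$.

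For (b), put $A:=\Ann(B_{\mathrm{un}})=(B/B_{\mathrm{un}})^\vee$. This group has order $|B/B_{\mathrm{un}}|$. Let $x\in\cX(\bA_k)^{B_{\mathrm{un}}}$. Its total character $\chi_x:=\sum_v\ev_{v,x_v}\in B^\vee$ is a finite sum by Lemma~\ref{lem:BM-finite-support}, and since $x$ is orthogonal to $B_{\mathrm{un}}$, $\chi_x$ lies in $A$.

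Enlarge $\Sigma_0$ to a finite set $F$ that contains $\Sigma_B$ and every place where $\ev_{v,x_v}\neq0$. Take as index set $I$ the places $v\notin F$, and put
\[
 S_v:=\{\ev_{v,z}:z\in\cX(k_v)\}\subseteq A.
\]
Here $S_v\subseteq A$ because $v\notin\Sigma'$. By (a), applied to $F\cup F'$ for an arbitrary finite $F'$, the hypothesis of Lemma~\ref{lem:finite-character-selection} holds. Applying that lemma with $a=-\chi_x$ gives distinct places $v_i\notin F$ and points $z_i$ with $\sum_i\ev_{v_i,z_i}=-\chi_x$, where $r\leq|A|-1$.

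Now define $x'$ by replacing $x_{v_i}$ with $z_i$. Since $v_i\notin F$, the original terms $\ev_{v_i,x_{v_i}}$ are zero. The new total character is therefore $\chi_x-\chi_x=0$, so $x'$ is orthogonal to $B$.

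It remains to check that $x'$ is still an adelic point. Only finitely many coordinates change, so by the restricted-product description of Theorem~\ref{thm:stack-product-formula} this holds provided $\cX$ admits such a description, which is the setting where the theorem will be used. For general stacks, the point is that altering finitely many local components of an adelic point is allowed. Because $\bA_k=\varinjlim_T\bA_k(T)$, one can regard $x$ as defined over $\bA_k(T)$ for some large $T$ containing the $v_i$, and then replace the factors at those $v_i$.
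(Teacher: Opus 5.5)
Your proof is correct and follows the paper's argument. Part (a) goes by finite duality together with both directions of the finite evaluation criterion. Part (b) cancels $-\chi_x$ at places outside an enlarged exceptional set using Lemma~\ref{lem:finite-character-selection}.

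The stabilization step in (a) is harmless but unnecessary, contrary to your remark that it is the subtle point. For each finite $F\supseteq\Sigma'$, any $b$ killed by all characters $\ev_{v,x_v}$ with $v\notin F$ already satisfies condition (ii) with $\Sigma_b=F$. So $W_F=\Ann(B_{\mathrm{un}})$ holds directly for every such $F$, and this is how the paper argues.
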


\begin{proof}
Since $B_{\mathrm{un}}$ is finite, the finite evaluation criterion gives a
finite set $\Sigma_B$ outside which every $b\in B_{\mathrm{un}}$ evaluates
trivially at every local point.

Fix a finite $F\supseteq\Sigma_B$, and let $E_B^F\subseteq B^\vee$ be the
subgroup generated by the evaluation characters outside $F$.  Then
$E_B^F\subseteq\Ann(B_{\mathrm{un}})$.  If the inclusion were strict, finite
duality would give an element $b\in B\setminus B_{\mathrm{un}}$ annihilated by
every character in $E_B^F$.  It would therefore evaluate trivially at every
local point outside $F$, so the finite evaluation criterion would put it in
$B_{\mathrm{un}}$, a contradiction.  This proves
\eqref{eq:abstract-evaluation-generation}.

Now let $x$ and $\Sigma_0$ be as in (b).  By
Lemma~\ref{lem:BM-finite-support}, enlarge $\Sigma_0$ to a finite set $\Sigma$
containing $\Sigma_B$ and all archimedean places, and such that
\[
 \inv_v\bigl(b(x_v)\bigr)=0
\]
for every $b\in B$ and every $v\notin\Sigma$.  Define the current obstruction
character
\[
 \lambda_x:B\longrightarrow\bQ/\bZ,
 \qquad
 \lambda_x(b):=\sum_{v\in\Sigma}\inv_v\bigl(b(x_v)\bigr).
\]
This is the full Brauer--Manin sum.  Since $x$ is orthogonal to
$B_{\mathrm{un}}$, the character $\lambda_x$ kills $B_{\mathrm{un}}$, so
\[
 -\lambda_x\in\Ann(B_{\mathrm{un}}).
\]
For $v\notin\Sigma$, put
\[
 S_v:=\{\ev_{v,z}:z\in\cX(k_v)\}
 \subseteq\Ann(B_{\mathrm{un}}).
\]
By part (a), the sets $S_v$ remaining after any finite removal generate
$\Ann(B_{\mathrm{un}})$.  Lemma~\ref{lem:finite-character-selection} therefore
gives pairwise distinct places $v_1,\ldots,v_r\notin\Sigma$ and points
$z_i\in\cX(k_{v_i})$ such that
\[
 -\lambda_x=\sum_{i=1}^r\ev_{v_i,z_i}
\]
and
\[
 r\leq |\Ann(B_{\mathrm{un}})|-1.
\]
Since
\[
 \Ann(B_{\mathrm{un}})\simeq(B/B_{\mathrm{un}})^\vee,
\]
this is the bound \eqref{eq:quantitative-auxiliary-bound}.  The finite-level
description of $\cX(\bA_k)$ shows that replacing the finitely many components
$x_{v_i}$ by $z_i$ gives another adelic point.  For every $b\in B$,
\[
 \sum_v\inv_v\bigl(b(x'_v)\bigr)
 =\lambda_x(b)+\sum_{i=1}^r\inv_{v_i}\bigl(b(z_i)\bigr)=0.
\]
The construction changes no component in the originally prescribed set.
\end{proof}

\begin{corollary}\label{thm:DM-formal-lemma}
Let $\cX$ be a smooth finite type Deligne--Mumford stack over a number field
whose adelic points satisfy the product formula of
Theorem~\ref{thm:stack-product-formula}.  Then
Theorem~\ref{thm:abstract-evaluation-formal} applies to $\cX$: the
local-evaluation identity \eqref{eq:abstract-evaluation-generation} and the
adelic-point modification assertion, including the bound
\eqref{eq:quantitative-auxiliary-bound}, both hold.
\end{corollary}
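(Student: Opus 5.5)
This corollary is a direct application of Theorem~\ref{thm:abstract-evaluation-formal}. That theorem needs three things of $\cX$: it is smooth and of finite type, it satisfies the finite evaluation criterion, and its adelic points carry the finite-level description used in part (b). The first is part of the hypotheses. The second is exactly Theorem~\ref{thm:LS-evaluation-criterion} of Loughran--Santens, which applies to every smooth finite type Deligne--Mumford stack over a number field. So the plan is to verify the hypotheses one at a time and then quote parts (a) and (b) verbatim.

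\textbf{Step 1: the finite evaluation criterion.} I would invoke Theorem~\ref{thm:LS-evaluation-criterion} to get Definition~\ref{def:finite-evaluation-criterion} for $\cX$. Part (a) of Theorem~\ref{thm:abstract-evaluation-formal} then follows with no further input. Its proof uses only this criterion, applied to the finite group $B_{\mathrm{un}}$, together with finite duality for $B^\vee$.

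\textbf{Step 2: the adelic input for part (b).} Part (b) uses two facts about adelic points.
\begin{enumerate}[(i)]
\item Brauer--Manin sums have finite support. This is Lemma~\ref{lem:BM-finite-support}, valid for any finite type algebraic stack.
\item Replacing finitely many local components of an adelic point by arbitrary local points again gives an adelic point.
\end{enumerate}
Fact (ii) is where the product-formula hypothesis enters. By Theorem~\ref{thm:stack-product-formula}, an adelic point is a tuple lying in some $\cX_T$. If we enlarge $T$ to contain the modified places $v_1,\ldots,v_r$, the new tuple $x'$ still lies in $\cX_{T\cup\{v_1,\ldots,v_r\}}$. This holds because at places of $T$ no integrality is required, so $x'$ defines a point of the restricted product and hence of $\cX(\bA_k)$.

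\textbf{Step 3: conclusion.} With both hypotheses checked, the remaining ingredients of part (b) are purely group-theoretic. These are Lemma~\ref{lem:finite-character-selection} and the identification $\Ann(B_{\mathrm{un}})\simeq(B/B_{\mathrm{un}})^\vee$. So the modification statement and the bound \eqref{eq:quantitative-auxiliary-bound} carry over unchanged.

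\textbf{Main obstacle.} The only point needing care is Step 2(ii): the theorem's phrase ``the finite-level description of $\cX(\bA_k)$'' must be supplied by the product formula rather than assumed. I would make sure the equivalence in Theorem~\ref{thm:stack-product-formula} is used on isomorphism classes, so that the modified tuple gives a well-defined class. I would also check that it is compatible with the local evaluations $b(x_v)$, which follows from naturality of restriction along $\bA_k\to k_v$.
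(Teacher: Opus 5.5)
Your proposal is correct and matches the paper's proof. That proof simply applies Theorem~\ref{thm:abstract-evaluation-formal}, with the finite evaluation criterion supplied by Loughran--Santens (Theorem~\ref{thm:LS-evaluation-criterion}). Your Step~2 makes explicit what the paper leaves implicit in the phrase ``the finite-level description of $\cX(\bA_k)$'': the product-formula hypothesis is what lets finitely many components be replaced.
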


\begin{proof}
Apply Theorem~\ref{thm:abstract-evaluation-formal} using
Theorem~\ref{thm:LS-evaluation-criterion}.
\end{proof}

\subsection{Brauer-detecting adelic atlases}

\begin{definition}\label{def:Brauer-detecting-atlas}
Let $\cX$ be a smooth finite type algebraic stack over $k$.  A smooth
surjective morphism
\[
 q:Y\longrightarrow\cX
\]
from a smooth separated geometrically integral $k$-variety is a
\emph{Brauer-detecting atlas} if:
\begin{enumerate}[(i)]
\item for every field extension $K/k$, the map
\[
 Y(K)\longrightarrow |\cX(K)|
\]
is surjective;
\item pullback is an isomorphism
\[
 q^*:\Br(\cX)\xrightarrow{\sim}\Br(Y).
\]
\end{enumerate}
It is a \emph{Brauer-detecting adelic atlas} if, in addition, after enlarging
a finite set of places and spreading out $q$, the resulting morphism is a
topological adelic lifting presentation in the sense of
Definition~\ref{def:topological-lifting-presentation}.
\end{definition}

The construction in Proposition~\ref{prop:special-quotient-unramified} below
provides Brauer-detecting adelic atlases for quotient stacks $[X/G]$ after
embedding $G$ in a special linear group, under the hypotheses stated there.

The field-valued lifting condition, rather than merely lifting over the
completions, is what permits one to test valuative unramifiedness on the atlas.

\begin{proposition}\label{prop:Brauer-detecting-unramified}
Let $q:Y\to\cX$ be a Brauer-detecting atlas.  Then
\begin{equation}\label{eq:Brauer-detecting-unramified}
 q^*\Brun(\cX)=\Brun(Y).
\end{equation}
Consequently, $\cX$ satisfies the finite evaluation criterion.
\end{proposition}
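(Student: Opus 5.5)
We first prove the equality \eqref{eq:Brauer-detecting-unramified} by checking the two inclusions directly from Definition~\ref{def:unramified-brauer-stack}, and then deduce the finite evaluation criterion from Harari's theorem on the variety $Y$.

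For the inclusion $q^*\Brun(\cX)\subseteq\Brun(Y)$ we only use that pullback along a morphism of smooth finite type stacks preserves unramified classes. Concretely, if $b\in\Brun(\cX)$ and $(R,K,y)$ is a triple with $y\in Y(K)$, then $y^*q^*b=(q\circ y)^*b$. This lies in the image of $\Br(R)$ because $b$ is unramified at $(R,K,q\circ y)$.

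For the reverse inclusion, let $c\in\Brun(Y)$. By condition (ii), $c=q^*b$ for a unique $b\in\Br(\cX)$, and we must show that $b$ is unramified. Take a DVR $R$ over $k$ with fraction field $K$, and a point $z\in|\cX(K)|$. Condition (i) gives $y\in Y(K)$ with $q(y)\simeq z$. Then
\[
 z^*b=y^*q^*b=y^*c,
\]
and the right-hand side lies in $\operatorname{im}(\Br(R)\to\Br(K))$ because $c$ is unramified. Hence $b\in\Brun(\cX)$. This step needs lifting for arbitrary field extensions $K/k$, not merely for completions, which is exactly the remark preceding the proposition.

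For the finite evaluation criterion, fix $b\in\Br(\cX)$ and put $c=q^*b$. Since $q$ is surjective on $K$-points for every $K$, in particular for $K=k_v$, we have
\[
 \{b(x_v):x_v\in\cX(k_v)\}=\{c(y_v):y_v\in Y(k_v)\}
\]
for every $v$. Condition (ii) of Definition~\ref{def:finite-evaluation-criterion} therefore holds for $b$ on $\cX$ if and only if it holds for $c$ on $Y$. By the equality just proved together with the injectivity of $q^*$, condition (i) holds for $b$ if and only if $c\in\Brun(Y)$.

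It thus suffices to know the finite evaluation criterion for the smooth geometrically integral variety $Y$. This is Harari's theorem: a class in $\Br(Y)$ has trivial local evaluations at all places outside a finite set if and only if it is unramified. One direction is a spreading-out argument, and the converse is the ramified case of \cite{Harari}. Alternatively, it is the case of Theorem~\ref{thm:LS-evaluation-criterion} for a variety, which is in particular a Deligne--Mumford stack. Here we use that the valuative definition agrees with the usual unramified Brauer group of $Y$ \cite[Lemma~5.13]{LoughranSantens}.

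The main point requiring care is this final citation. We need Harari's result for varieties that need not be proper, stated with the valuative notion of unramified, so that it matches our definition. Loughran--Santens covers this case, so we would invoke Theorem~\ref{thm:LS-evaluation-criterion} for $Y$ directly.
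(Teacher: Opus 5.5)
Your proposal is correct and follows the paper's proof essentially step for step. It uses functoriality for one inclusion and the field-valued lifting of condition (i) for the reverse, then transfers local evaluations via surjectivity of $Y(k_v)\to|\cX(k_v)|$ and applies Theorem~\ref{thm:LS-evaluation-criterion} to the variety $Y$. Your appeal to injectivity of $q^*$ is harmless but unnecessary, since the lifting argument already shows directly that $q^*b$ unramified implies $b$ unramified.
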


\begin{proof}
Functoriality gives $q^*\Brun(\cX)\subseteq\Brun(Y)$.  Conversely, suppose
$q^*b$ is unramified, and let $(R,K,\xi)$ be a valuative triple for $\cX$.
Field-valued surjectivity gives $y\in Y(K)$ with $q(y)\simeq\xi$, so
$\xi^*b=y^*q^*b$ lies in the image of $\Br(R)\to\Br(K)$.  Thus $b$ is
unramified, proving \eqref{eq:Brauer-detecting-unramified}.

Evaluation is compatible with $q$, and $Y(k_v)\to|\cX(k_v)|$ is surjective.
Thus $b$ has eventually trivial evaluations at every local point of $\cX$ if
and only if $q^*b$ does on $Y$.  Theorem~\ref{thm:LS-evaluation-criterion} for
the variety $Y$, together with \eqref{eq:Brauer-detecting-unramified}, now
gives the finite evaluation criterion for $\cX$.
\end{proof}

\begin{proposition}\label{prop:special-quotient-unramified}
\label{prop:quotient-evaluation-generation}
\label{thm:quotient-formal-lemma}
Let $G$ be a linear algebraic group acting on a smooth separated geometrically
integral $k$-variety $X$.  Choose a faithful representation
\[
 G\hookrightarrow H:=\SL(V)
\]
and put
\[
 Y:=X\times^G H.
\]
Assume that $Y$ is a smooth separated geometrically integral variety.  Under
the change-of-groups equivalence
\[
 [X/G]\simeq[Y/H],
\]
the quotient atlas
\[
 q:Y\longrightarrow[Y/H]
\]
is a Brauer-detecting adelic atlas.  In particular, the evaluation-generation
formula \eqref{eq:abstract-evaluation-generation}, the formal modification
theorem, and the bound \eqref{eq:quantitative-auxiliary-bound} hold for
$[X/G]$.
\end{proposition}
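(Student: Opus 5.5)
We must verify the two conditions of Definition~\ref{def:Brauer-detecting-atlas} for $q:Y\to[Y/H]$, then the adelic lifting part, and finally invoke Proposition~\ref{prop:Brauer-detecting-unramified} and Theorem~\ref{thm:abstract-evaluation-formal}.

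\emph{Field-valued surjectivity.} A $K$-point of $[Y/H]$ is an $H$-torsor $P\to\Spec K$ together with an $H$-equivariant map $P\to Y$. Since $H=\SL(V)$ is special, $H^1(K,H)=1$ for every field $K/k$. Hence $P$ is trivial, and a trivialization yields a point $y\in Y(K)$ with $q(y)\simeq\xi$. This gives condition~(i). The same argument works over $k_v$, over $\cO_v$ (using the smooth model $\SL(V)_{\cO_{k,S}}$, whose torsors over local rings are trivial), and over $\bA_k(T)$, since $\SL_n$-torsors over a product of local rings are trivial; this can also be seen factorwise via Proposition~\ref{prop:conrad-product}.

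\emph{Brauer isomorphism.} The morphism $q$ is an $H$-torsor, so $q^*$ fits into the Hochschild--Serre or Leray spectral sequence for $Y\to[Y/H]$. The relevant inputs for $H=\SL(V)$ are:
\begin{itemize}
\item $\bar k[H]^\times=\bar k^\times$;
\item $\Pic(H_{\bar k})=0$;
\item $\Br(H_{\bar k})=0$, and more precisely $\Br(H)=\Br(k)$ with $H^2$ of $\Gm$ of $H$ controlled.
\end{itemize}
I would follow the argument of \cite[Proposition~5.3]{DhillonClassifying}, which treats the case $X=\Spec k$. The natural generalization passes through the simplicial (Čech) nerve $Y\times H^{\bullet}$ and uses the Künneth-type facts $\Pic(Y\times H)=\Pic(Y)$, $\cO(Y\times H)^\times=\cO(Y)^\times\cdot k^\times$-type rigidity (Rosenlicht), and $\Br(Y\times H)=\Br(Y)$ for $H$ simply connected semisimple with trivial character group and Picard group. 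These make the descent spectral sequence degenerate enough to show that $H^2([Y/H],\Gm)_{\rm tors}\to H^2(Y,\Gm)_{\rm tors}$ is bijective.

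I expect this step to be the main obstacle. Injectivity needs the vanishing of the contributions $H^1(H,\cdot)$-type terms, that is, of $\Pic$ and characters of $H$. Surjectivity needs every Brauer class on $Y$ to be $H$-invariant with trivial descent datum obstruction, which is where $\Br(Y\times H)=\Br(Y)$ (via $\Br_a$ and $\Br_1(H)=\Br(k)$ for simply connected $H$) is used.

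\emph{Conclusion.} For the adelic part, spread out $Y$, $H$ and the action over $\cO_{k,S}$. Proposition~\ref{prop:quotient-stacks-liftable} with $\GL_n$ replaced by the special group $\SL(V)$ gives the diagonal condition. The torsor-triviality above gives all three surjectivities in Definition~\ref{def:topological-lifting-presentation}. Then Proposition~\ref{prop:Brauer-detecting-unramified} gives the finite evaluation criterion, and Theorem~\ref{thm:abstract-evaluation-formal} yields \eqref{eq:abstract-evaluation-generation}, formal modification and \eqref{eq:quantitative-auxiliary-bound} for $[X/G]\simeq[Y/H]$.
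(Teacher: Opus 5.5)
Your proposal follows the paper's proof step for step: field-valued lifting from $H^1(K,\SL(V))=1$, the isomorphism $q^*:\Br([Y/H])\xrightarrow{\sim}\Br(Y)$, adelic lifting from the triviality of $\SL(V)$-torsors over $k_v$, $\cO_v$ and $\bA_k(T)$, and then Proposition~\ref{prop:Brauer-detecting-unramified} together with Theorem~\ref{thm:abstract-evaluation-formal}. The only difference is that the paper invokes \cite[Proposition~5.3]{DhillonClassifying} directly for $[Y/H]$ with $Y$ smooth and $H=\SL(V)$, so your descent-spectral-sequence generalization is not needed; if you did carry it out, the key relative input $\Br(Y\times H)=\Br(Y)$ for arbitrary smooth $Y$ would require a genuine Leray or homotopy-invariance argument for $Y\times H\to Y$, and does not follow from $\Br_1(H)=\Br(k)$ alone.
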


\begin{proof}
For every field extension $K/k$,
\[
 H^1(K,\SL(V))=1,
\]
so every $K$-point of $[Y/H]$ is represented by a point of $Y(K)$.  Pullback
through the quotient atlas induces an isomorphism
\[
 q^*:\Br([Y/H])\xrightarrow{\sim}\Br(Y)
\]
by \cite[Proposition~5.3]{DhillonClassifying}.  After spreading out, the atlas
$q$ is a topological adelic lifting presentation: the required lifting
statements over $k_v$, $\cO_v$, and $\bA_k(T)$ follow from the triviality of
$\SL(V)$-torsors over these rings, as in
Proposition~\ref{prop:quotient-stacks-liftable}.  Hence $q$ is a Brauer-detecting
adelic atlas.  Proposition~\ref{prop:Brauer-detecting-unramified} and
Theorem~\ref{thm:abstract-evaluation-formal} give the conclusions.
\end{proof}

\subsection{Boundary residues}

\begin{proposition}\label{prop:residue-formula-atlas}
Let $q:Y\to\cX$ be a Brauer-detecting atlas, and suppose that $Y$ admits a
smooth proper compactification
\[
 j:Y\hookrightarrow\overline Y.
\]
Let $D_1,\ldots,D_m$ be the irreducible codimension-one components of
$\overline Y\setminus Y$, and let
\[
 \partial_{D_i}:\Br(Y)\longrightarrow
 H^1(k(D_i),\bQ/\bZ) 
\]
be the residue map.  Then
\begin{equation}\label{eq:residue-image-compactification}
 q^*\Brun(\cX)
 =\operatorname{im}\bigl(\Br(\overline Y)\longrightarrow\Br(Y)\bigr)
\end{equation}
and
\begin{equation}\label{eq:residue-formula-atlas}
 q^*\Brun(\cX)
 =\ker\left(
  \Br(Y)\xrightarrow{\oplus_i\partial_{D_i}}
  \bigoplus_{i=1}^m H^1(k(D_i),\bQ/\bZ)
 \right).
\end{equation}
Equivalently, the sequence
\[
 0\longrightarrow\Br(\overline Y)
 \longrightarrow\Br(Y)
 \xrightarrow{\oplus_i\partial_{D_i}}
 \bigoplus_{i=1}^m H^1(k(D_i),\bQ/\bZ)
\]
is exact at its first two terms.
\end{proposition}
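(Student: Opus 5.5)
By Proposition~\ref{prop:Brauer-detecting-unramified} we have $q^*\Brun(\cX)=\Brun(Y)$. It therefore suffices to prove the two displayed equalities with $q^*\Brun(\cX)$ replaced by $\Brun(Y)$. In this form they are statements about the smooth variety $Y$ and its smooth compactification $\overline Y$. The plan is to derive both equalities from purity for the Brauer group of the regular scheme $\overline Y$, together with the valuative definition of $\Brun$ in Definition~\ref{def:unramified-brauer-stack}.

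\emph{The residue kernel equals the image of $\Br(\overline Y)$.} Let $Z=\overline Y\setminus Y$ and let $Z'$ be the union of the components of codimension at least two. Purity (Gabber's absolute purity, or Grothendieck's theorem in characteristic zero) gives $\Br(\overline Y\setminus Z')\cong\Br(\overline Y)$, since $\overline Y$ is regular. It also gives the Gysin exact sequence
\[
 0\to\Br(\overline Y\setminus Z')\to\Br(Y)\xrightarrow{\oplus\partial_{D_i}}\bigoplus_i H^1(k(D_i),\bQ/\bZ)
\]
for the smooth divisor components $D_i$. To obtain this, one may first remove the singular locus of $D_i$ and the pairwise intersections; these have codimension at least two, so purity applies again. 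Together these give \eqref{eq:residue-formula-atlas} in terms of the image of $\Br(\overline Y)$, and they show that $\Br(\overline Y)\to\Br(Y)$ is injective. The injectivity also follows from the injectivity of $\Br(\overline Y)\to\Br(k(Y))$ for regular integral schemes.

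\emph{The image of $\Br(\overline Y)$ lies in $\Brun(Y)$.} Let $(R,K,y)$ be a valuative triple with $y\in Y(K)$. Since $\overline Y$ is proper, the valuative criterion extends $y$ to a morphism $\Spec R\to\overline Y$. If $b$ is the restriction of $\bar b\in\Br(\overline Y)$, then $y^*b$ is the image of the pullback of $\bar b$ along this extension, so it lies in $\operatorname{im}(\Br(R)\to\Br(K))$.

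\emph{Conversely, $\Brun(Y)$ is contained in the residue kernel.} Let $b\in\Brun(Y)$ and let $D_i$ be a boundary divisor. Put $R=\cO_{\overline Y,\eta_{D_i}}$, a DVR with fraction field $K=k(Y)$, and let $y$ be the generic point of $Y$. By hypothesis $y^*b$ comes from $\Br(R)$. The residue at $D_i$ is computed on this local ring and vanishes on the image of $\Br(R)$, so $\partial_{D_i}(b)=0$. Combined with the previous two steps, this proves both equalities and the exactness claim.

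The main obstacle I expect is the purity step. It requires care in stating the residue sequence with $H^1(k(D_i),\bQ/\bZ)$ when the components $D_i$ are not smooth or intersect. The reduction by removing a codimension-two locus handles this, but one must check that the residue maps are unchanged by this shrinking. Torsion coefficients need no special attention here, since we are in characteristic zero.
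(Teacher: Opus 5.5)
Your proposal is correct and follows the paper's route: reduce to $\Brun(Y)$ via Proposition~\ref{prop:Brauer-detecting-unramified}, then use purity on the regular scheme $\overline Y$ to identify the image of $\Br(\overline Y)$ with the kernel of the boundary residues. The only difference is that you prove directly that the valuative group $\Brun(Y)$ sits between these two groups, using the valuative criterion of properness and the discrete valuation ring $\cO_{\overline Y,\eta_{D_i}}$. The paper instead tacitly relies on the comparison of the valuative definition with the classical unramified Brauer group (Loughran--Santens) before invoking purity.
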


\begin{proof}
Proposition~\ref{prop:Brauer-detecting-unramified} identifies the left-hand side
with $\Brun(Y)$.  Purity identifies this group with the image of
$\Br(\overline Y)$ in $\Br(Y)$; see
\cite[Corollaire~6.2]{GrothendieckBrauer}.  Classes in $\Br(Y)$ already have
zero residue at codimension-one points of $Y$, while the remaining
codimension-one points of $\overline Y$ are the generic points of
$D_1,\ldots,D_m$.  The same purity statement therefore identifies the image
with the displayed residue kernel, proving
\eqref{eq:residue-image-compactification} and
\eqref{eq:residue-formula-atlas}.
\end{proof}

\begin{corollary}\label{cor:finite-B-residue-formula}
In the situation of Proposition~\ref{prop:residue-formula-atlas}, for finite
$B\subseteq\Br(\cX)$ put $B_{\mathrm{un}}=B\cap\Brun(\cX)$.  Then
\[
 q^*B_{\mathrm{un}}
 =q^*B\cap\bigcap_{i=1}^m\ker(\partial_{D_i}).
\]
Thus $B_{\mathrm{un}}$, the finite group appearing in
Theorem~\ref{thm:abstract-evaluation-formal}, is computable from these boundary
residue maps.
\end{corollary}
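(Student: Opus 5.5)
The plan is to intersect both sides of \eqref{eq:residue-formula-atlas} with $q^*B$ and then pull back through the isomorphism $q^*$. First, since $q$ is a Brauer-detecting atlas, $q^*:\Br(\cX)\to\Br(Y)$ is an isomorphism, and in particular injective. Injectivity gives $q^*(B\cap\Brun(\cX))=q^*B\cap q^*\Brun(\cX)$: one inclusion is trivial. For the other, an element of the right side has the form $q^*b=q^*c$ with $b\in B$ and $c\in\Brun(\cX)$, and injectivity forces $b=c\in B_{\mathrm{un}}$.

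Second, substitute the residue description \eqref{eq:residue-formula-atlas} from Proposition~\ref{prop:residue-formula-atlas}, namely $q^*\Brun(\cX)=\bigcap_{i=1}^m\ker(\partial_{D_i})$, where each $\partial_{D_i}$ is regarded as a map on $\Br(Y)$. This yields
\[
 q^*B_{\mathrm{un}}=q^*B\cap\bigcap_{i=1}^m\ker(\partial_{D_i}).
\]
Here the kernel of $\oplus_i\partial_{D_i}$ is the intersection of the individual kernels, which is why the two forms of the residue condition agree.

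Third, for the computability remark, note that $B$ is finite, so one can list its finitely many elements. For each $b\in B$ one evaluates $\partial_{D_i}(q^*b)$ for $i=1,\ldots,m$, and $B_{\mathrm{un}}$ is the set of $b$ with all residues zero, recovered by applying $(q^*)^{-1}$.

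The only substantive step is the compatibility of intersection with $q^*$, which rests on the injectivity of $q^*$; everything else is a formal consequence of the proposition. I do not expect a genuine obstacle.
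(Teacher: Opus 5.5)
Your argument is correct and is essentially the intended one. The paper states this corollary without a separate proof, as the immediate consequence of intersecting \eqref{eq:residue-formula-atlas} with $q^*B$, using that the Brauer-detecting hypothesis makes $q^*$ injective, so that $q^*(B\cap\Brun(\cX))=q^*B\cap q^*\Brun(\cX)$.
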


The formal modification theorem can be used to replace full-Brauer approximation with
unramified-Brauer approximation when finite modifications at new good places
preserve the relevant adelic neighbourhoods.

\begin{corollary}\label{cor:full-to-unramified}
Let $\cX$ admit a Brauer-detecting adelic atlas, and let
$S\subset\Omega_k$ be finite.  Suppose:
\begin{enumerate}[(i)]
\item strong approximation off $S$ holds for $\cX$ with respect to $\Br(\cX)$;
\item there is a finite subgroup $B_0\subset\Brun(\cX)$ such that
\[
 \cX(\bA_k)^{B_0}=\cX(\bA_k)^{\Brun(\cX)};
\]
\item the quotient
\[
 \Br(\cX)/(\Brun(\cX)+\Br(k))
\]
is finite;
\item for some adelically liftable integral model
$\scX/\cO_{k,S_1}$, with $S_1\supset\Omega_k^\infty$ finite, the map
\[
 |\scX(\cO_v)|\longrightarrow|\cX(k_v)|
\]
is surjective for every finite place $v\notin S_1$.
\end{enumerate}
Then strong approximation off $S$ holds for $\cX$ with respect to
$\Brun(\cX)$.
\end{corollary}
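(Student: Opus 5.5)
Fix a point $\xi^S\in\cX(\bA_k^S)^{\Brun(\cX)}$ and an open neighbourhood $\mathcal U^S$ of it.  The aim is to find a rational point in $\mathcal U^S$.  The plan is to modify a lift of $\xi^S$ at new good places, outside $S$ and outside the support of the neighbourhood, so that it becomes orthogonal to the full Brauer group.  Hypothesis (i) then produces a rational point in a slightly shrunken neighbourhood.

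\textbf{Step 1: reduction to a finite subgroup.}  By (iii), choose finitely many classes $b_1,\dots,b_n\in\Br(\cX)$ that generate $\Br(\cX)$ modulo $\Brun(\cX)+\Br(k)$.  Put
\[
 B:=B_0+\langle b_1,\dots,b_n\rangle .
\]
The $b_i$ are torsion, so $B$ is finite.  Constant classes pair trivially with adelic points by the global reciprocity sequence.  Lift $\xi^S$ to $x\in\cX(\bA_k)^{\Brun(\cX)}$; by (ii) this is the same as $x\in\cX(\bA_k)^{B_0}$.  The key observation is that $B\cap\Brun(\cX)$ can be strictly larger than $B_0$, yet orthogonality to $B_0$ already gives orthogonality to all of $\Brun(\cX)$, and hence to $B_{\mathrm{un}}$.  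So $x\in\cX(\bA_k)^{B_{\mathrm{un}}}$.

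\textbf{Step 2: shrinking the neighbourhood.}  By the restricted-product description of Theorem~\ref{thm:stack-product-formula} and Theorem~\ref{thm:comparison-topologies}, we may shrink $\mathcal U^S$ to a basic open set of the form
\[
 \prod_{v\in T\setminus S}W_v\times\prod_{v\notin T}|\scX(\cO_v)|,
\]
where $T\supset S\cup S_1$ is finite.  By (iv), for $v\notin T$ the factor $|\scX(\cO_v)|$ is all of $|\cX(k_v)|$.  Consequently, changing components of the adelic point at places outside $T$ keeps the point inside the neighbourhood.

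\textbf{Step 3: modification and approximation.}  Since $\cX$ has a Brauer-detecting atlas, Proposition~\ref{prop:Brauer-detecting-unramified} shows that it satisfies the finite evaluation criterion.  Apply Theorem~\ref{thm:abstract-evaluation-formal}(b) to $B$, the point $x$, and $\Sigma_0:=T$.  This yields an adelic point $x'$ that agrees with $x$ on $T$ and is orthogonal to $B$.  Because $B$ together with $\Brun(\cX)+\Br(k)$ generates $\Br(\cX)$, I want to conclude that $x'$ is orthogonal to all of $\Br(\cX)$.  By Step 2, the projection $p^S(x')$ lies in $\mathcal U^S$, and it lies in $\cX(\bA_k^S)^{\Br(\cX)}$.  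Hypothesis (i) then gives a rational point in $\mathcal U^S$.

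\textbf{Main obstacle.}  The delicate point is orthogonality of $x'$ to all of $\Brun(\cX)$, not merely to $B_{\mathrm{un}}$.  The modification changes $x$ at the places $v_i$, and an unramified class outside $B_0$ might in principle evaluate nontrivially at the new local points.  My first attempt is to use (ii) once more: $x'$ is orthogonal to $B_0\subseteq B$, so $x'\in\cX(\bA_k)^{B_0}=\cX(\bA_k)^{\Brun(\cX)}$.  Combined with orthogonality to $B$, this covers $\Br(\cX)=B+\Brun(\cX)+\Br(k)$.  This is exactly why (ii) is phrased as an equality of Brauer--Manin sets.  If that equality should fail to transfer, the fallback is to choose $\Sigma_0$ large enough, via the finite evaluation criterion, that the $B_0$-evaluations vanish at all new places; then argue again through (ii).
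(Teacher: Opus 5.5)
Your proposal is correct and follows the paper's proof. It uses the same finite group $B=B_0+\langle b_1,\dots,b_n\rangle$, the same use of (iv) to get a finite set outside which modifications keep the projection in the neighbourhood, the same application of Theorem~\ref{thm:abstract-evaluation-formal}(b) with that set as $\Sigma_0$, and the same closing step through (ii). Your ``first attempt'' at the main obstacle is exactly the paper's argument and needs no fallback: (ii) is an equality of subsets of $\cX(\bA_k)$, so it applies directly to $x'$ once $x'$ is orthogonal to $B\supseteq B_0$.
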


\begin{proof}
Choose a finite subgroup $B_1\subset\Br(\cX)$ whose image generates
$\Br(\cX)/(\Brun(\cX)+\Br(k))$, and put $B:=B_0+B_1$.  Let
\[
 \xi^S\in\cX(\bA_k^S)^{\Brun(\cX)}
\]
and let $U^S$ be an open neighbourhood of $\xi^S$.  Choose a lift
$\xi\in\cX(\bA_k)^{\Brun(\cX)}$ with $p^S(\xi)=\xi^S$.

Enlarge $S_1$ so that $\xi$ is represented on the finite level determined by
$S\cup S_1$.  Pull back $U^S$ to this finite level and choose a basic product
open neighbourhood of the projection of $\xi$.  Such a basic open constrains
only finitely many local factors.  Hypothesis (iv) therefore gives a finite set
$\Sigma_0\supset S\cup S_1$ such that every adelic point agreeing with $\xi$ at
the places of $\Sigma_0\setminus S$ has projection in $U^S$.  Indeed, outside
$S_1$ the integral local subset is the whole local space, so new finite
modifications at good places do not leave the chosen cylinder neighbourhood.

Apply Theorem~\ref{thm:abstract-evaluation-formal} to $B$ and $\xi$, with the
prescribed set $\Sigma_0$.  We obtain
\[
 \xi'\in\cX(\bA_k)^B
\]
which agrees with $\xi$ at every place of $\Sigma_0$.  Thus
$p^S(\xi')\in U^S$.  Orthogonality to $B_0$ and hypothesis (ii) imply that
$\xi'$ is orthogonal to all of $\Brun(\cX)$.  Orthogonality to $B_1$, together
with global reciprocity for constant classes and hypothesis (iii), then gives
\[
 \xi'\in\cX(\bA_k)^{\Br(\cX)}.
\]
Full-Brauer strong approximation supplies a rational point whose projection
lies in $U^S$.  This proves density in the projected unramified Brauer--Manin
set.
\end{proof}

Hypothesis (iv) is automatic when the integral model $\scX$ is a proper
algebraic space (in particular, a proper scheme) over $\cO_{k,S_1}$, by the
valuative criterion.

We finish this section with an example showing that the bound in
\eqref{eq:quantitative-auxiliary-bound} is sharp.

\begin{example}\label{ex:biquadratic-formal-cancellation}
Let
\[
 L=\bQ(\sqrt2,\sqrt3),
 \qquad
 T=\Res_{L/\bQ}\Gm/\Gm,
 \qquad
 \cX=BT.
\]
Let
\[
 B:=\Bre(BT),
 \qquad
 B_{\mathrm{un}}:=B\cap\Brun(BT).
\]
The calculations of Theorem~\ref{thm:biquadratic-Sha-example} give
\[
 B\simeq\bZ/4\bZ,
 \qquad
 B_{\mathrm{un}}\simeq2\bZ/4\bZ.
\]
In particular,
\[
 0\subsetneq B_{\mathrm{un}}\subsetneq B.
\]
There is an adelic point
\[
 x\in BT(\bA_\bQ)^{B_{\mathrm{un}}}
\]
which is not orthogonal to $B$, and it becomes orthogonal to $B$ after changing
one local component.  Moreover,
\[
 1=|B/B_{\mathrm{un}}|-1,
\]
so the bound in \eqref{eq:quantitative-auxiliary-bound} is sharp.
\end{example}

\begin{proof}
Choose a generator $\psi\in B^\vee$ with
\[
 \psi(1\bmod4)=\frac14\bmod\bZ.
\]
Then
\[
 \Ann(B_{\mathrm{un}})=\{0,2\psi\}.
\]
Theorem~\ref{thm:biquadratic-Sha-example} computes the local evaluation images:
at the prime $3$ the decomposition group is the whole biquadratic Galois group,
so
\[
 E_3=B^\vee,
\]
whereas at the prime $5$ the decomposition group has order $2$, so
\[
 E_5=\Ann(B_{\mathrm{un}})=\{0,2\psi\}.
\]
Choose a local $T$-torsor $x_3$ with evaluation character $2\psi$, and take the
trivial torsor at every other place.  Since normalized Brauer classes evaluate
trivially on the trivial torsor, the total obstruction character of the
resulting adelic point is $2\psi$.  This character kills $B_{\mathrm{un}}$ but
is non-zero, so
\[
 x\in BT(\bA_\bQ)^{B_{\mathrm{un}}}
 \setminus BT(\bA_\bQ)^B.
\]

Choose a local torsor $z_5$ with evaluation character $2\psi$ and replace the
trivial component at $5$ by $z_5$.  The total obstruction becomes
\[
 2\psi+2\psi=0.
\]
Thus the modified adelic point is orthogonal to all of $B$, and only one
auxiliary place was used.  Since $|B/B_{\mathrm{un}}|=2$, the quantitative
bound is attained.

Finally, choose a faithful embedding $T\hookrightarrow\SL(V)$ and put
$Y=\SL(V)/T$.  For any smooth proper compactification
$Y\subset\overline Y$, Proposition~\ref{prop:residue-formula-atlas} identifies
$2B$ with the common kernel of the boundary residue maps on $B$.  Thus a
generator of $B$ is genuinely ramified, while its double is unramified.
\end{proof}

\section{Consequences for classifying stacks}\label{sec:classifying}

Throughout this section, $G$ is a connected linear algebraic group over the
number field $k$.  We use the topology and the product formula of Section~2.
In particular, Theorem~\ref{thm:BG-adelic-discrete} identifies the adelic points
of $BG$ away from any finite set $S$ with a discrete direct sum of local
cohomology sets.  Consequently, every approximation statement in this section
is an exact surjectivity statement.

\subsection[The unramified Brauer group of BG]{The unramified Brauer group of
\texorpdfstring{$BG$}{BG}}\label{subsec:Brun-BG}

Let
\[
 e:\Spec k\longrightarrow BG
\]
be the point classifying the trivial $G$-torsor.  We write
\[
 \Bre(BG):=\ker\bigl(e^*:\Br(BG)\longrightarrow\Br(k)\bigr)
\]
for the \emph{normalized Brauer group} of $BG$.  Since the composite
\[
 \Br(k)\longrightarrow\Br(BG)\xrightarrow{e^*}\Br(k)
\]
is the identity, the structural morphism gives a split injection and hence
\begin{equation}\label{eq:Brauer-BG-splitting}
 \Br(BG)=\Br(k)\oplus\Bre(BG).
\end{equation}

Choose a faithful representation
\[
 G\hookrightarrow H:=\SL(V)
\]
and put
\[
 X:=H/G.
\]
The group $H$ acts on $X$ by left multiplication, and the standard
change-of-groups construction gives an equivalence
\begin{equation}\label{eq:BG-homogeneous-presentation}
 BG\simeq[X/H].
\end{equation}
Let
\[
 q:X\longrightarrow [X/H]\simeq BG
\]
be the quotient atlas, and let $x_0=1G\in X(k)$.  The pullback square
\[
\begin{tikzcd}
X \arrow[r,"q"] & BG\\
\Spec k \arrow[u,"x_0"] \arrow[r,"e"'] & BG \arrow[u,equal]
\end{tikzcd}
\]
means that $q^*$ sends normalized classes on $BG$ to classes on $X$ vanishing
at $x_0$.

\begin{proposition}\label{prop:Pic-Br-BG}
There are natural isomorphisms of finite abelian groups
\[
 \Bre(BG)\xrightarrow{\sim}\Pic(G)
\]
and
\[
 \Br(BG)/\Br(k)\xrightarrow{\sim}\Pic(G).
\]
More precisely, pullback along $q$ identifies $\Bre(BG)$ with
\[
 \Br_{x_0}(X):=
 \ker\bigl(x_0^*:\Br(X)\longrightarrow\Br(k)\bigr),
\]
and the $G$-torsor $H\to H/G$ identifies $\Pic(G)$ with
$\Br_{x_0}(X)$.
\end{proposition}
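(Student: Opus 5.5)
The plan has three steps: identify $\Bre(BG)$ with $\Br_{x_0}(X)$ using the earlier comparison result, identify $\Br_{x_0}(X)$ with $\Pic(G)$ using the torsor $H\to X$, and deduce finiteness from a known bound on $\Pic(G)$.

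\emph{Step 1.} By \cite[Proposition~5.3]{DhillonClassifying}, already used in the proofs of Theorem~\ref{thm:BD-Dhillon-full-Brauer} and Proposition~\ref{prop:special-quotient-unramified}, $q^*:\Br(BG)\to\Br(X)$ is an isomorphism. The pullback square $q\circ x_0=e$ shows that $q^*$ carries $\ker e^*$ into $\ker x_0^*$. Since $q^*$ is bijective and $x_0^*q^*=e^*$, it restricts to an isomorphism $\Bre(BG)\simeq\Br_{x_0}(X)$. Combined with the splitting \eqref{eq:Brauer-BG-splitting}, this gives $\Br(BG)/\Br(k)\simeq\Bre(BG)$. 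So everything reduces to identifying $\Br_{x_0}(X)$ with $\Pic(G)$.

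\emph{Step 2.} Use Sansuc's exact sequence for the $G$-torsor $\pi:H\to X=H/G$ \cite[Proposition~6.10]{Sansuc}:
\[
 \Pic(H)\to\Pic(G)\xrightarrow{\ \delta\ }\Br(X)\xrightarrow{\pi^*}\Br(H).
\]
Here $\delta(L)$ is obtained by pushing the torsor class along the extension of $G$ by $\Gm$ defined by $L$, giving a $\Gm$-gerbe on $X$. Since $H=\SL(V)$ is simply connected semisimple, $\Pic(H)=0$, so $\delta$ is injective. For the image, $\bar k[H]^\times=\bar k^\times$ and $\Pic(H_{\bar k})=0$. Sansuc's computation then gives $\Br(H)=\Br_1(H)=\Br(k)\oplus\Br_e(H)$ with $\Br_e(H)\simeq H^1(k,\widehat H)=0$; more precisely, $\Br(\SL_n)=\Br(k)$. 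Hence $\ker(\pi^*)=\ker(\Br(X)\to\Br(H))$ consists of the classes whose pullback to $H$ is constant.

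Evaluating at $x_0$, which lifts to $1\in H(k)$, shows that $\ker\pi^*\subseteq\Br_{x_0}(X)$. Conversely, if $b\in\Br_{x_0}(X)$, then $\pi^*b\in\Br(k)$ is detected by evaluation at $1$, which equals $x_0^*b=0$, so $\pi^*b=0$. Therefore $\ker\pi^*=\Br_{x_0}(X)$. I also need $\delta(L)$ to vanish at $x_0$: the fibre of $H\to X$ over $x_0$ is the trivial torsor $G$, and the pushout of a trivial torsor is trivial. Thus $\delta:\Pic(G)\xrightarrow{\sim}\Br_{x_0}(X)$.

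\emph{Step 3.} Finiteness follows because $\Pic(G)$ is finite for connected linear $G$ over a field of characteristic zero \cite[Lemme~6.9]{Sansuc}. Naturality in $G$ comes from the functoriality of the Sansuc sequence and of change of groups; I also need independence of the chosen embedding $G\hookrightarrow\SL(V)$. To get that, I would compare two embeddings through the diagonal embedding into $\SL(V\oplus V')$. The resulting map of homogeneous spaces is compatible with $q$ and with the two Sansuc sequences.

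The main obstacle is Step 2: verifying exactness of Sansuc's sequence in this form, namely that the image of $\delta$ is exactly $\ker\pi^*$ rather than something only contained in it. Sansuc states the sequence with $\Br$ replaced by $\Br_1$ or its algebraic part. To upgrade it, I would argue that $X_{\bar k}$ has trivial transcendental Brauer group, because $\Br(X_{\bar k})\hookrightarrow\Br(H_{\bar k})=0$ via the same sequence over $\bar k$ with $\Pic(G_{\bar k})$ finite. This gives $\Br(X)=\Br_1(X)$, and Sansuc's statement then applies to the whole group.
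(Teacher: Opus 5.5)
Your Steps~1 and~3 agree with the paper. The body of Step~2 is essentially the proof of \cite[Proposition~2.10(ii)]{ColliotTheleneXu}, which the paper simply cites. Given $\Pic(H)=0$, $\Br(H)=\Br(k)$ and an exact sequence $\Pic(H)\to\Pic(G)\to\Br(X)\to\Br(H)$, your check that $\ker\pi^*=\Br_{x_0}(X)$ does yield $\Pic(G)\xrightarrow{\sim}\Br_{x_0}(X)$.

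The problem is your last paragraph, which you present as closing the main gap. The claim $\Br(X_{\bar k})=0$ is false in general. Over $\bar k$ the same sequence reads
\[
 0=\Pic(H_{\bar k})\to\Pic(G_{\bar k})\to\Br(X_{\bar k})\to\Br(H_{\bar k})=0,
\]
so it gives $\Br(X_{\bar k})\simeq\Pic(G_{\bar k})$. It does not give an injection $\Br(X_{\bar k})\hookrightarrow\Br(H_{\bar k})$; that would require $\Pic(G_{\bar k})=0$. Take $G=\PGL_n$. Then $\Pic(G_{\bar k})\simeq\bZ/n\bZ$. On the other hand $\widehat G=0$, so $\Pic(X_{\bar k})\simeq\widehat G=0$ and $\bar k[X]^\times=\bar k^\times$, and Hochschild--Serre gives $\Br_1(X)=\Br(k)$. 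Hence all of $\Br_{x_0}(X)\simeq\bZ/n\bZ$ is transcendental and $\Br(X)\ne\Br_1(X)$. In one of the paper's central examples, the classes you need are exactly the ones a reduction to $\Br_1$ would lose.

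The same example shows that the premise of that paragraph is also wrong. No sequence $\Pic(H)\to\Pic(G)\to\Br_1(X)\to\Br_1(H)$ can be exact, since it would embed $\Pic(\PGL_n)\simeq\bZ/n\bZ$ into $\Br_1(X)\cap\Br_{x_0}(X)=0$. Sansuc's Proposition~6.10 is stated for the full cohomological Brauer groups: $\Pic(Y)\to\Pic(G)\to\Br(X)\to\Br(Y)$ for a torsor $Y\to X$ under a connected linear group. The connecting map $\delta$ you describe, via the obstruction to lifting along a $\Gm$-extension of $G$, genuinely lands outside $\Br_1(X)$ in general.

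The repair is to delete the final paragraph and invoke the full-Brauer sequence directly, or cite \cite[Proposition~2.10(ii)]{ColliotTheleneXu} as the paper does; nothing else in your argument needs to change.

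One smaller slip in Step~2 concerns the algebraic Brauer group of a connected group $H$. It is controlled by $H^2(k,\widehat H)$ and $H^1(k,\Pic(H_{\bar k}))$, not by $H^1(k,\widehat H)$. Also, $\Br(H)=\Br_1(H)$ requires $\Br(H_{\bar k})=0$. All of these vanish for $\SL(V)$, so your conclusion $\Br(\SL(V))=\Br(k)$ still holds.
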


\begin{proof}
Since $X$ is smooth and $H=\SL(V)$, pullback through the quotient atlas is an
isomorphism by \cite[Proposition~5.3]{DhillonClassifying}.  As $e$ pulls back
to $x_0$, it restricts to
\begin{equation}\label{eq:normalized-Brauer-comparison}
 q^*:\Bre(BG)\xrightarrow{\sim}\Br_{x_0}(X).
\end{equation}

Since $H$ is semisimple and simply connected and $G$ is connected,
\cite[Proposition~2.10(ii)]{ColliotTheleneXu} states that the map attached to
the $G$-torsor $H\to H/G$ is an isomorphism
\begin{equation}\label{eq:CTX-Pic-to-Brauer}
 \Pic(G)\xrightarrow{\sim}\Br_{x_0}(X).
\end{equation}
The two isomorphisms and \eqref{eq:Brauer-BG-splitting} prove the assertions.
Finiteness follows from \cite[Proposition~6.10]{Sansuc}; see also
\cite[Proposition~2.9]{ColliotTheleneXu}.
\end{proof}

We next compute the unramified subgroup.  Put
\[
 \widehat G:=X^*(G_{\bar{k}}),
\]
viewed as a Galois lattice.  The cyclically unramified subgroup
$\Sha^1_{\mathrm{cyc}}(k,\widehat G)$ was defined in
Subsection~\ref{subsec:cyclically-unramified}; by
Lemma~\ref{lem:cyclic-procyclic-description}, it is equivalently the subgroup
of $H^1(k,\widehat G)$ whose restriction to every closed procyclic subgroup of
$\Gal(\bar{k}/k)$ is zero.  This is the group appearing in
\cite[Proposition~2.10(iii)]{ColliotTheleneXu}.

\begin{theorem}\label{thm:Brun-BG-Sha}
Under the isomorphism of Proposition~\ref{prop:Pic-Br-BG}, there is a natural
identification
\[
 \Brun(BG)\cap\Bre(BG)
 \xrightarrow{\sim}
 \Sha^1_{\mathrm{cyc}}(k,\widehat G).
\]
Equivalently,
\begin{equation}\label{eq:Brun-BG-quotient}
 \Brun(BG)/\Br(k)
 \xrightarrow{\sim}
 \Sha^1_{\mathrm{cyc}}(k,\widehat G).
\end{equation}
In particular,
\begin{equation}\label{eq:ramified-Brauer-quotient}
 \frac{\Br(BG)}{\Brun(BG)+\Br(k)}
 \xrightarrow{\sim}
 \frac{\Pic(G)}{\Sha^1_{\mathrm{cyc}}(k,\widehat G)},
\end{equation}
and this quotient is finite.
\end{theorem}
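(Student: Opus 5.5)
The plan is to reduce everything to the homogeneous space $X=H/G$ via the Brauer-detecting atlas $q:X\to[X/H]\simeq BG$. Since $H=\SL(V)$ has trivial $H^1$ over every field, Proposition~\ref{prop:special-quotient-unramified} (applied with the trivial action, or directly via the argument there) shows that $q$ is a Brauer-detecting atlas. Proposition~\ref{prop:Brauer-detecting-unramified} then gives
\[
 q^*\Brun(BG)=\Brun(X).
\]
Intersecting with normalized classes, and using that $q^*$ carries $\Bre(BG)$ isomorphically onto $\Br_{x_0}(X)$ by \eqref{eq:normalized-Brauer-comparison}, I obtain
\[
 q^*\bigl(\Brun(BG)\cap\Bre(BG)\bigr)=\Brun(X)\cap\Br_{x_0}(X).
\]
Here I use that constant classes are unramified, so the splitting \eqref{eq:Brauer-BG-splitting} restricts to $\Brun(BG)=\Br(k)\oplus(\Brun(BG)\cap\Bre(BG))$.

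Next I identify $\Brun(X)\cap\Br_{x_0}(X)$ inside $\Br_{x_0}(X)\simeq\Pic(G)\simeq H^1(k,\widehat G)$. The input is \cite[Proposition~2.10(iii)]{ColliotTheleneXu}: for $H$ semisimple simply connected and $G$ connected, the isomorphism \eqref{eq:CTX-Pic-to-Brauer} carries the subgroup $\Sha^1_{\mathrm{cyc}}(k,\widehat G)$ (in its procyclic form, Lemma~\ref{lem:cyclic-procyclic-description}) onto the normalized unramified Brauer group $\Br_{\mathrm{un}}(X)\cap\Br_{x_0}(X)$. Here $\Br_{\mathrm{un}}$ means the image of $\Br(\overline X)$ for a smooth compactification, which by purity (Proposition~\ref{prop:residue-formula-atlas}, or \cite[Lemma~5.13]{LoughranSantens}) agrees with the valuative $\Brun(X)$ of Definition~\ref{def:unramified-brauer-stack}. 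Composing these identifications gives the first displayed isomorphism.

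The main obstacle is checking that the map $\Pic(G)\to H^1(k,\widehat G)$ used by Colliot-Th\'el\`ene--Xu is compatible with the one implicitly used here. For non-reductive $G$, the relevant facts are that $\Pic(G)=\Pic(G^{\mathrm{red}})$ and $\widehat G=\widehat{G^{\mathrm{red}}}$, and I would need to confirm that their statement covers connected linear $G$ directly. I would also need to match their definition of the unramified group with the valuative one. I would handle the latter by citing the equality of the two notions for smooth varieties in characteristic zero.

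Finally, \eqref{eq:Brun-BG-quotient} follows from the splitting above: $\Brun(BG)/\Br(k)\cong\Brun(BG)\cap\Bre(BG)$. For \eqref{eq:ramified-Brauer-quotient}, divide $\Br(BG)/\Br(k)\cong\Pic(G)$ (Proposition~\ref{prop:Pic-Br-BG}) by the image of $\Brun(BG)$, which is $\Sha^1_{\mathrm{cyc}}$. Finiteness follows because $\Pic(G)$ is finite.
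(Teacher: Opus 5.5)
Your proposal follows the paper's proof. Lifting field points through the $\SL(V)$-torsor gives $q^*\Brun(BG)=\Brun(X)$; you get this from Propositions~\ref{prop:special-quotient-unramified} and~\ref{prop:Brauer-detecting-unramified} applied to $G$ acting on $\Spec k$, whereas the paper repeats the argument inline. Then \cite[Lemma~5.13]{LoughranSantens} and purity identify the normalized unramified classes with $\Br_{x_0}(X^c)$, \cite[Proposition~2.10(iii)]{ColliotTheleneXu} identifies that group with $\Sha^1_{\mathrm{cyc}}(k,\widehat G)$, and splitting off $\Br(k)$ gives the rest.

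One correction: $\Pic(G)\simeq H^1(k,\widehat G)$ is false in general. For $G=\PGL_n$ one has $\widehat G=0$ but $\Pic(G)\simeq\bZ/n\bZ$, because Sansuc's sequence only gives an injection $H^1(k,\widehat G)\hookrightarrow\Pic(G)$. That injection is all you need to regard $\Sha^1_{\mathrm{cyc}}(k,\widehat G)$ as a subgroup of $\Pic(G)$. Your chain of isomorphisms never uses the false identification, so the compatibility ``obstacle'' for a map $\Pic(G)\to H^1(k,\widehat G)$ does not arise in the form you describe.
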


\begin{proof}
We first compare unramified classes on $BG$ and on the homogeneous space $X$.
Functoriality of the valuative definition gives
\[
 q^*\Brun(BG)\subseteq\Brun(X).
\]
Conversely, let $b\in\Br(BG)$ and suppose that $q^*b$ is unramified on $X$.
Let $R$ be a discrete valuation ring over $k$, with fraction field $F$, and let
\[
 \xi:\Spec F\longrightarrow BG
\]
be an $F$-point.  Pulling the $H$-torsor $q:X\to BG$ back along $\xi$ gives an
$H$-torsor over $F$.  Since $H=\SL(V)$ is special,
\[
 H^1(F,H)=1.
\]
Thus $\xi$ lifts, up to isomorphism, to a point $y\in X(F)$.  Consequently,
\[
 \xi^*b=y^*q^*b\in\Br(F).
\]
The right-hand side belongs to the image of $\Br(R)\to\Br(F)$ because $q^*b$
is unramified.  Therefore $b$ is unramified on $BG$.  We have proved
\begin{equation}\label{eq:unramified-comparison-BG-X}
 q^*\Brun(BG)=\Brun(X).
\end{equation}

Choose a smooth proper compactification $X^c$ of $X$.  It exists in
characteristic zero by resolution of singularities \cite{Hironaka}.  For smooth
geometrically integral varieties, the valuative definition used here agrees
with the classical unramified Brauer group
\cite[Lemma~5.13]{LoughranSantens}.  Purity then identifies this group with the
image of the restriction map
\[
 \Br(X^c)\longrightarrow\Br(X);
\]
see \cite[Corollaire~6.2]{GrothendieckBrauer}.  Since $x_0\in X(k)$, the
constant subgroup splits by evaluation at $x_0$.  Hence
\begin{equation}\label{eq:normalized-unramified-compactification}
 \Brun(X)\cap\Br_{x_0}(X)
 =\Br_{x_0}(X^c),
\end{equation}
where
\[
 \Br_{x_0}(X^c):=
 \ker\bigl(x_0^*:\Br(X^c)\to\Br(k)\bigr).
\]

Colliot--Th\'el\`ene--Xu identify the normalized Brauer group of a smooth
compactification of $H/G$ with the cyclically trivial subgroup of
$H^1(k,\widehat G)$; this is
\cite[Proposition~2.10(iii)]{ColliotTheleneXu}.  Thus
\begin{equation}\label{eq:CTX-compactification-Sha}
 \Br_{x_0}(X^c)
 \xrightarrow{\sim}
 \Sha^1_{\mathrm{cyc}}(k,\widehat G).
\end{equation}
Combining \eqref{eq:normalized-Brauer-comparison},
\eqref{eq:unramified-comparison-BG-X},
\eqref{eq:normalized-unramified-compactification}, and
\eqref{eq:CTX-compactification-Sha} proves the first assertion.

Every constant Brauer class is unramified.  Indeed, for every valuative triple
over $k$, the pullback of a class in $\Br(k)$ already lies in the Brauer group
of the valuation ring.  The direct sum decomposition
\eqref{eq:Brauer-BG-splitting} therefore gives
\eqref{eq:Brun-BG-quotient}.  Finally,
Proposition~\ref{prop:Pic-Br-BG} gives
\eqref{eq:ramified-Brauer-quotient}; its finiteness follows from the finiteness
of $\Pic(G)$.
\end{proof}

\begin{corollary}\label{cor:characterfree-Brun-BG}
If $\widehat G=0$, then
\[
 \Brun(BG)=\Br(k).
\]
In particular,
\[
 \Brun(B\PGL_n)=\Br(k)
\]
for every $n\geq2$.
\end{corollary}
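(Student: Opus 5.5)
The statement follows directly from Theorem~\ref{thm:Brun-BG-Sha}. First I will use \eqref{eq:Brun-BG-quotient}, which identifies $\Brun(BG)/\Br(k)$ with $\Sha^1_{\mathrm{cyc}}(k,\widehat G)$. By definition \eqref{eq:def-Sha-cyc}, this group is a subgroup of $H^1(\Gamma,\widehat G)$. If $\widehat G=0$, then $H^1(\Gamma,\widehat G)=0$, and so its subgroup $\Sha^1_{\mathrm{cyc}}(k,\widehat G)$ also vanishes. This gives $\Brun(BG)/\Br(k)=0$.

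Next, recall from the proof of Theorem~\ref{thm:Brun-BG-Sha} that constant classes are unramified, so $\Br(k)\subseteq\Brun(BG)$. Combining this inclusion with the vanishing of the quotient gives $\Brun(BG)=\Br(k)$.

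For $\PGL_n$, I only need to check that $\widehat{\PGL_n}=X^*(\PGL_{n,\bar k})=0$. The argument is that $\PGL_n$ is semisimple, so it equals its own derived group. Any character $\chi:\PGL_n\to\Gm$ kills commutators and is therefore trivial on the whole group. This is the only input beyond the theorem, and it is standard. I do not expect any obstacle: all the substantive work is contained in Theorem~\ref{thm:Brun-BG-Sha}.
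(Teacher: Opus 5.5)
Your proposal is correct and matches the paper's proof: $\Sha^1_{\mathrm{cyc}}(k,\widehat G)$ vanishes when $\widehat G=0$, so \eqref{eq:Brun-BG-quotient} gives $\Brun(BG)=\Br(k)$. For the second claim you use that $\PGL_n$ has no non-trivial characters, here justified by semisimplicity.
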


\begin{proof}
If $\widehat G=0$, then
$\Sha^1_{\mathrm{cyc}}(k,\widehat G)=0$, so the assertion follows from
\eqref{eq:Brun-BG-quotient}.  The split adjoint group $\PGL_n$ has no
non-trivial characters.
\end{proof}

\subsection{Localization and the ordinary unramified obstruction}
\label{subsec:unramified-approximation-BG}

Put
\[
 P_G:=\Pic(G),
 \qquad
 P_G^\vee:=\Hom(P_G,\bQ/\bZ).
\]
For $L\in P_G$, let
\[
 b_L\in\Bre(BG)
\]
be the normalized Brauer class corresponding to $L$ under
Proposition~\ref{prop:Pic-Br-BG}.  For a place $v$ and a local torsor
$\xi_v\in H^1(k_v,G)$, define
\begin{equation}\label{eq:local-evaluation-character}
 \ev_v(\xi_v)(L)
 :=\inv_v\bigl(b_L(\xi_v)\bigr).
\end{equation}
This gives a map
\[
 \ev_v:H^1(k_v,G)\longrightarrow P_G^\vee.
\]
Define
\begin{equation}\label{eq:def-Ev}
 E_v:=\operatorname{im}\left(
 H^1(k_v,G)\xrightarrow{\ev_v}P_G^\vee
 \right).
\end{equation}
Each $E_v$ is a subgroup of $P_G^\vee$.  At a non-archimedean place this
follows from Kottwitz's local duality.  At a complex place the local cohomology
set is trivial.  At a real place the evaluation map factors through Borovoi's
local abelianization group, and the abelianization map is surjective
\cite[Theorem~5.4]{BorovoiAbelian}.

For any set $A\subseteq\Omega_k$, write
\begin{equation}\label{eq:def-EA}
 E_A:=\sum_{v\in A}E_v\subseteq P_G^\vee.
\end{equation}
The sum means the subgroup generated by the local images; since $P_G^\vee$ is
finite, every element is a finite sum.  In particular, for a finite set $S$,
\begin{equation}\label{eq:def-ES}
 E_S=\sum_{v\in S}E_v.
\end{equation}
Under Theorem~\ref{thm:Brun-BG-Sha}, let
\begin{equation}\label{eq:def-UG}
 U_G\subseteq P_G
\end{equation}
be the subgroup corresponding to $\Brun(BG)\cap\Bre(BG)$.  Thus
\[
 U_G\simeq\Sha^1_{\mathrm{cyc}}(k,\widehat G).
\]
Write
\[
 \Ann(U_G):=\{\chi\in P_G^\vee:\chi|_{U_G}=0\}.
\]

\begin{theorem}\label{thm:kottwitz-global-sequence}
There is a natural exact sequence of pointed sets
\begin{equation}\label{eq:kottwitz-global-sequence}
 H^1(k,G)
 \longrightarrow
 \bigoplus_{v\in\Omega_k}H^1(k_v,G)
 \xrightarrow{\lambda_G}
 P_G^\vee.
\end{equation}
Here the direct sum consists of families that are trivial at all but finitely
many places, and
\begin{equation}\label{eq:kottwitz-obstruction-character}
 \lambda_G((\xi_v)_v)(L)
 =\sum_{v\in\Omega_k}\inv_v\bigl(b_L(\xi_v)\bigr).
\end{equation}
Thus $\lambda_G$ is precisely the Brauer--Manin obstruction map associated to
the normalized Brauer group of $BG$.
\end{theorem}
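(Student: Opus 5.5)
Since $\lambda_G$ is defined by the Brauer-evaluation formula \eqref{eq:kottwitz-obstruction-character}, the identification with the Brauer--Manin map is a tautology. What has to be proved is that $\lambda_G$ is well defined and that the sequence \eqref{eq:kottwitz-global-sequence} is exact. I will prove exactness by deriving it from Corollary~\ref{thm:full-brauer-exact} rather than by rebuilding Kottwitz's cohomological argument. The comparison with Kottwitz's own map, defined via $H^1(k_v,G)\simeq\Hom(\Pic(G_{k_v}),\bQ/\bZ)$ and restriction along $r_v:P_G\to\Pic(G_{k_v})$, I would record separately, as a compatibility between the isomorphism of Proposition~\ref{prop:Pic-Br-BG} and local Kottwitz duality. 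The exactness proof does not need it.

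\emph{Well-definedness and the composite.} For $L\in P_G$ the class $b_L$ is normalized, so $b_L(1)=e^*b_L=0$. Every term with $\xi_v$ trivial therefore vanishes, and the sum in \eqref{eq:kottwitz-obstruction-character} is finite. The map $L\mapsto b_L$ is a homomorphism, as are evaluation and $\inv_v$, so $\lambda_G((\xi_v))$ is a character of $P_G$. If $(\xi_v)$ is the localization of a global torsor $\xi$, then $b_L(\xi)\in\Br(k)$. The sequence \eqref{eq:global-Brauer-exact-sequence} then gives $\sum_v\inv_v(b_L(\xi)_v)=0$, so the composite is trivial.

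\emph{Exactness.} Let $(\xi_v)\in\ker\lambda_G$. By Theorem~\ref{thm:BG-adelic-discrete}, the family $(\xi_v)$ is an adelic point $\xi\in BG(\bA_k)$. By the splitting \eqref{eq:Brauer-BG-splitting}, the Brauer group decomposes as $\Br(BG)=\Br(k)\oplus\Bre(BG)$. Constant classes pair to zero with every adelic point by reciprocity. Classes in $\Bre(BG)$ are exactly the $b_L$, and they pair to zero with $\xi$ by hypothesis. Hence $\xi\in BG(\bA_k)^{\Br(BG)}$.

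Now pick a place $v_0$ with $\xi_{v_0}=1$; one exists because $\xi$ has finite support and $\Omega_k$ is infinite. Set $S=\{v_0\}$. Corollary~\ref{thm:full-brauer-exact} gives a global torsor $\eta$ whose localization agrees with $\xi$ at every $v\neq v_0$. It remains to check the component at $v_0$. Since $\eta$ is global, its full family lies in $\ker\lambda_G$, and so does $\xi$. The two families agree away from $v_0$, so $\ev_{v_0}(\eta_{v_0})=\ev_{v_0}(\xi_{v_0})=0$.

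\emph{Main obstacle.} The difficulty is this last step: concluding $\eta_{v_0}=1$ from $\ev_{v_0}(\eta_{v_0})=0$. This needs $\ev_{v_0}$ to be injective. For finite $v_0$, I would get injectivity by showing that $\ev_{v_0}$ is Kottwitz's bijection $H^1(k_{v_0},G)\xrightarrow{\sim}\Pic(G_{k_{v_0}})^\vee$ composed with the dual of $r_{v_0}$. Then injectivity amounts to injectivity of $r_{v_0}^\vee$, that is, surjectivity of $r_{v_0}$.

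To secure surjectivity of $r_{v_0}$, I would choose $v_0$ by Chebotarev so that the decomposition group at $v_0$ is the whole Galois group of a splitting field of $\widehat G$, with $v_0$ unramified, so that $\Pic(G)\to\Pic(G_{k_{v_0}})$ is an isomorphism. That is a finite condition on Frobenius, so there are infinitely many such $v_0$, and one can be chosen outside the support of $\xi$. If $\widehat G$ is cyclic-split this works directly. If no such $v_0$ exists, I would instead use two places $S=\{v_0,v_1\}$ and a twisting argument. The fallback is to cite Kottwitz's theorem directly \cite[Theorem~9.1]{ColliotTheleneFlasque} for exactness and only prove the identification of maps.
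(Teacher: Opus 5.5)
There is a genuine gap exactly where you flag it: the passage from $\ev_{v_0}(\eta_{v_0})=0$ to $\eta_{v_0}=1$. Your proposed remedy fails on two counts.

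First, at an unramified place the decomposition group is generated by Frobenius, hence is cyclic. So Chebotarev yields $D_{v_0}=\Gal(K/k)$ only when that group is cyclic. Otherwise only the finitely many ramified places remain, and these may lie in the support of $\xi$ or have proper decomposition groups; for $L=\bQ(\sqrt5,\sqrt{29})$, no place of $\bQ$ has full decomposition group.

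Second, the splitting field of $\widehat G$ is the wrong field. For semisimple $G$ one has $\widehat G=0$. Yet for the quasi-split adjoint unitary group $\mathrm{PU}_3$ attached to a real quadratic $K_1/\bQ$, one has $\Pic(G)=0$ while $\Pic(G_{\bQ_v})\simeq\bZ/3\bZ$ at primes split in $K_1$, so $r_v$ is not surjective there. What matters is the Galois action on $\pi_1(G)$.

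Products of such examples give a $G$ over $\bQ$ for which $\ev_v$ is injective at no place. One can take a suitable torus split by $\bQ(\sqrt5,\sqrt{29})$, such a $\mathrm{PU}_3$ with $K_1=\bQ(\sqrt5)$, and $\SL_1(D)$ for a quaternion algebra $D$ ramified at $\infty$. So no single-place argument of this kind can close the gap, and the ``two places and a twisting argument'' is not spelled out. Note also that your injectivity step needs $\ev_{v_0}$ to equal Kottwitz's local bijection followed by $r_{v_0}^\vee$. That is precisely the compatibility \cite[Proposition~2.9]{ColliotTheleneXu}, so contrary to your claim, the comparison with Kottwitz's map is used.

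The missing idea is to avoid the projected statement of Corollary~\ref{thm:full-brauer-exact}. In the proof of Theorem~\ref{thm:BD-Dhillon-full-Brauer}, the Borovoi--Demarche identity $X(\bA_k)^{\Br_{x_0}(X)}=\overline{H(k_S)\cdot X(k)}$ holds in $X(\bA_k)$ itself. The factor $H(k_S)$ is invisible in $BG$, because points of one $H(k_v)$-orbit in $X(k_v)$ have isomorphic images in $[X/H](k_v)$. So lift $\xi\in\ker\lambda_G=BG(\bA_k)^{\Br(BG)}$ to $y\in X(\bA_k)^{\Br_{x_0}(X)}$. Since $BG(\bA_k)$ is discrete (Theorem~\ref{thm:BG-adelic-discrete} with $\Sigma=\varnothing$) and $q$ is continuous (Theorem~\ref{thm:adelic-functoriality}), $q^{-1}(\xi)$ is an open neighbourhood of $y$. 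It therefore contains some $h\cdot x$ with $h\in H(k_S)$ and $x\in X(k)$, whence $\xi=q(h\cdot x)=q(x)$ at every place. When $k$ has a complex place, your original argument already works with $v_0$ complex, since $H^1(\mathbf{C},G)=1$.

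With this repair your route genuinely differs from the paper's. The paper takes exactness from Kottwitz's sequence \cite[Theorem~3.1]{ColliotTheleneXu}, \cite[Theorem~9.4]{ColliotTheleneFlasque}, and uses \cite[Proposition~2.9]{ColliotTheleneXu} only to identify Kottwitz's local pairing with Brauer evaluation. The repaired argument instead derives exactness of the Brauer-defined sequence from strong approximation on $H/G$ together with discreteness. The identification with Kottwitz's own map, which is what makes this ``Kottwitz's'' sequence, still needs that citation.
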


\begin{proof}
Exactness is \cite[Theorem~3.1]{ColliotTheleneXu}; for the real places, see
\cite[Theorem~9.4]{ColliotTheleneFlasque}, building on
\cite[\S\S2.5--2.6]{Kottwitz}.  The compatibility of transgression with
evaluation in \cite[Proposition~2.9]{ColliotTheleneXu} identifies the local
Kottwitz pairing with Brauer evaluation:
\[
 \langle\xi_v,L_v\rangle_v
 =\inv_v\bigl(b_L(\xi_v)\bigr),
\]
where $L_v$ is the restriction of $L$.  Summing proves
\eqref{eq:kottwitz-obstruction-character}.
\end{proof}

\subsubsection{Borovoi's localization theorem in terms of the Brauer group}

For a connected reductive group, Borovoi gives a complete description of the
image of localization at an arbitrary, possibly infinite, set of places
\cite[Main Theorem~3.7]{BorovoiLocalization}.  His Corollary~3.8 computes the
cokernel of the abelianized localization map, and Corollary~3.9 gives a
necessary-and-sufficient surjectivity criterion.

More precisely, let $M=\pi_1(G)$ be the algebraic fundamental group, choose a
finite Galois extension $E/k$ through which the Galois action on $M$ factors and
which has no real places, and put $\Gamma=\Gal(E/k)$.  Following Borovoi, we define
\[
 C_G:=(M_\Gamma)_{\mathrm{tors}}.
\]
For each place $v$, he constructs a local map
\[
 \lambda_v^{\mathrm{Bor}}:H^1(k_v,G)\longrightarrow C_G
\]
and sums these maps over a set of places.
For $A\subseteq\Omega_k$, define the finite-support sum
\[
 \Sigma_A^{\mathrm{Bor}}:
 \bigoplus_{v\in A}H^1(k_v,G)\longrightarrow C_G,
 \qquad
 (\xi_v)_{v\in A}\longmapsto
 \sum_{v\in A}\lambda_v^{\mathrm{Bor}}(\xi_v).
\]

\begin{proposition}[Kottwitz--Borovoi dictionary]
\label{prop:Kottwitz-Borovoi-dictionary}
Assume that $G$ is connected reductive.  There is a natural identification
\begin{equation}\label{eq:Borovoi-Pic-duality}
 C_G\xrightarrow{\sim}P_G^\vee
\end{equation}
under which Borovoi's local map $\lambda_v^{\mathrm{Bor}}$ agrees with
$\ev_v$, after fixing the compatible convention for local
invariants.  Consequently, for every $A\subseteq\Omega_k$,
\[
 \operatorname{im}(\Sigma_A^{\mathrm{Bor}})=E_A.
\]
\end{proposition}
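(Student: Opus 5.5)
The plan is to build the identification \eqref{eq:Borovoi-Pic-duality} from two standard facts for connected reductive $G$.

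First, Sansuc's description of $\Pic(G)$: for a reductive group one has $\Pic(G)\simeq H^1(k,\widehat{\pi_1(G)}')$, where the relevant lattice is the dual of $M=\pi_1(G)$. Concretely, choose a flasque or $z$-extension $1\to Z\to G'\to G\to1$ with $G'^{\mathrm{der}}$ simply connected and $Z$ a quasi-trivial torus. Then $\Pic(G)$ is identified with the cokernel of $X^*(G')^\Gamma\to X^*(Z)^\Gamma$; see \cite[Proposition~6.10]{Sansuc}. With $E/k$ having no real places, Tate--Nakayama duality for the finite group $\Gamma$ pairs $(M_\Gamma)_{\mathrm{tors}}=\hat H^{-1}(\Gamma,M)$-type groups with $H^1(\Gamma,M^\vee)$. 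This produces a perfect pairing
\[
 C_G\times P_G\longrightarrow\bQ/\bZ,
\]
and hence \eqref{eq:Borovoi-Pic-duality}. I would cite Kottwitz's construction \cite[\S\S2.5--2.6]{Kottwitz} rather than redo it. In that construction $\Pic(G)^\vee$ is identified with the torsion of $\pi_1(G)_\Gamma$, which is exactly Borovoi's target.

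Second, I would compare the local maps. Borovoi's $\lambda_v^{\mathrm{Bor}}$ is the composite of local abelianization $H^1(k_v,G)\to H^1_{\mathrm{ab}}(k_v,G)$ with a map to $(M_{\Gamma_v})_{\mathrm{tors}}$ followed by corestriction to $C_G$. At finite $v$, the first two steps together form Kottwitz's local isomorphism onto $\Pic(G_{k_v})^\vee$. The corestriction is then dual to the restriction $r_v:\Pic(G)\to\Pic(G_{k_v})$. Theorem~\ref{thm:kottwitz-global-sequence}, via \cite[Proposition~2.9]{ColliotTheleneXu}, already identifies the local Kottwitz pairing with $\inv_v(b_L(\xi_v))$. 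It follows that, after dualizing, $\lambda_v^{\mathrm{Bor}}(\xi_v)$ is the character $L\mapsto\inv_v b_L(\xi_v)=\ev_v(\xi_v)(L)$. At complex places both sides are trivial.

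At real places, both maps factor through $H^1_{\mathrm{ab}}(\bR,G)$. Here I would use the Tate-cohomology description of $H^1_{\mathrm{ab}}(\bR,G)=\hat H^{-1}$-type groups and the real case of Kottwitz duality \cite[Theorem~9.4]{ColliotTheleneFlasque} to check agreement. This is where sign conventions for local invariants enter; the statement's phrase ``compatible convention'' allows us to fix them.

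The main obstacle is this sign and normalization bookkeeping: ensuring that Borovoi's corestriction and Tate--Nakayama maps match Kottwitz's pairing, especially at real places. I would handle it by reducing to tori through a $z$-extension and $G\to G/G^{\mathrm{der}}$ functoriality. For a torus, all the maps are explicit cup products with the fundamental class, and there the compatibility is the standard comparison of local class field theory with evaluation of Brauer classes. Functoriality of both constructions in $G$, together with surjectivity of abelianization, then transfers the equality back to $G$.

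Finally, once $\lambda_v^{\mathrm{Bor}}=\ev_v$ holds for every $v$, the sum $\Sigma_A^{\mathrm{Bor}}$ corresponds to the sum of the $\ev_v$. Its image is the set of finite sums of elements of the $E_v$. Since each $E_v$ is a subgroup (as recorded after \eqref{eq:def-Ev}), this set is the subgroup $E_A$.
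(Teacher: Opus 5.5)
Your main line is the same citation chain as the paper's proof. You invoke Tate--Kottwitz duality for $C_G\simeq P_G^\vee$. You write Borovoi's $\lambda_v^{\mathrm{Bor}}$ as Kottwitz's local map followed by the coinvariant (corestriction) map, which is dual to $r_v$. You use \cite[Proposition~2.9]{ColliotTheleneXu} to turn the local Kottwitz pairing into $\inv_v(b_L(\xi_v))$. Finally, the subgroup argument gives $\operatorname{im}(\Sigma_A^{\mathrm{Bor}})=E_A$. All of that is fine.

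The gap is in the step you use for what you call the main obstacle. Reducing to tori through a $z$-extension $1\to Z\to G'\to G\to1$ cannot transfer the equality back to $G$. Functoriality only gives agreement on the image of $H^1(k_v,G')$, or of $H^1_{\mathrm{ab}}(k_v,G')$. That image is in general proper, because the obstruction is the connecting map $H^1(k_v,G)\to H^2(k_v,Z)$. For the $z$-extension $\GL_n\to\PGL_n$ at a finite place, $H^1(k_v,\GL_n)=1$ while $H^1(k_v,\PGL_n)\simeq\frac1n\bZ/\bZ$. On the abelianized side, $\pi_1(\GL_n)=\bZ$ contributes no torsion, whereas $\pi_1(\PGL_n)=\bZ/n\bZ$. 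Surjectivity of $H^1(k_v,G)\to H^1_{\mathrm{ab}}(k_v,G)$ does not rescue this: you would still need agreement on all of $H^1_{\mathrm{ab}}(k_v,G)$, which is not reached from $G'$. So the reduction proves nothing for $\PGL_n$, and in particular does not settle the real-place normalization you invoked it for.

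A reduction to tori that does work has to pass through maximal tori $T\subseteq G$. Over a local field, every class in $H^1(k_v,G)$ comes from a suitable maximal torus, by results of Kottwitz and Borovoi. You would then use functoriality of both $\lambda_v^{\mathrm{Bor}}$ and $\ev_v$ along $T\hookrightarrow G$. Otherwise you must, as the paper does, take the agreement of Borovoi's local maps with the Kottwitz/Brauer-evaluation maps directly from \cite{BorovoiLocalization} and \cite{ColliotTheleneXu}.

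Two of your preliminary descriptions are also inaccurate. For a $z$-extension (so $Z$ is quasi-trivial), $\operatorname{coker}(X^*(G')^\Gamma\to X^*(Z)^\Gamma)$ is only $\ker(\Pic(G)\to\Pic(G'))$. Here $\Pic(G')=H^1(k,\widehat{G'})$ need not vanish. For a torus with the trivial $z$-extension the cokernel is $0$, although $\Pic(T)$ can be $\bZ/m\bZ$ as in \eqref{eq:Pic-cyclic-norm-one}. The cokernel formula is correct for a flasque resolution, where $Z$ is flasque and $G'$ is quasi-trivial, so $\Pic(G')=0$.

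Likewise, pairing $(M_\Gamma)_{\mathrm{tors}}$ with $H^1(\Gamma,\Hom(M,\bZ))$, and the equality $(M_\Gamma)_{\mathrm{tors}}=\widehat H^{-1}(\Gamma,M)$, both require $M=\pi_1(G)$ to be torsion-free. For $\PGL_n$ one has $\Hom(M,\bZ)=0$, although $C_G\simeq P_G^\vee\simeq\bZ/n\bZ$. Since you intend to cite Kottwitz, these slips are not fatal. Still, the duality you invoke must be the version that sees torsion in $\pi_1(G)$, which is exactly what matters for $\PGL_n$.
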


\begin{proof}
Tate--Kottwitz duality gives \eqref{eq:Borovoi-Pic-duality}, and
\cite[Propositions~2.9--2.10 and Theorem~3.1]{ColliotTheleneXu} identifies its
local pairing with Brauer evaluation.  Borovoi's maps in
\cite[Subsections~3.2--3.4]{BorovoiLocalization} are these local maps, proving
both assertions.
\end{proof}

\begin{theorem}[Borovoi's image theorem in terms of the Brauer group]
\label{thm:Borovoi-away-S-image}
Let $S\subset\Omega_k$ be finite.  Define
\[
 \Lambda_{G,S}:BG(\bA_k^S)\longrightarrow P_G^\vee
\]
by
\begin{equation}\label{eq:def-Lambda-GS}
 \Lambda_{G,S}\bigl((\xi_v)_{v\notin S}\bigr)
 :=\sum_{v\notin S}\ev_v(\xi_v),
\end{equation}
and let
\[
 \lambda_{G,S}:BG(\bA_k^S)\longrightarrow P_G^\vee/E_S
\]
be the composite with the quotient map.  Then
\begin{equation}\label{eq:away-S-image-kernel}
 \operatorname{im}\bigl(BG(k)\longrightarrow BG(\bA_k^S)\bigr)
 =\ker(\lambda_{G,S}).
\end{equation}
Equivalently, an away-from-$S$ family $\xi^S$ is global if and only if
\begin{equation}\label{eq:away-S-image-ES}
 \Lambda_{G,S}(\xi^S)\in E_S.
\end{equation}
Moreover, the plain localization map is surjective if and only if
\begin{equation}\label{eq:Borovoi-plain-surjectivity}
 E_{\Omega_k\setminus S}\subseteq E_S.
\end{equation}
For connected reductive $G$, these are precisely Main Theorem~3.7 and
Corollary~3.9 of \cite{BorovoiLocalization}, with Borovoi's localization set
equal to $\Omega_k\setminus S$.
\end{theorem}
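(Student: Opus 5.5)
The plan is to read everything off Kottwitz's global sequence, Theorem~\ref{thm:kottwitz-global-sequence}. This needs no reductivity hypothesis. The comparison with Borovoi then follows from the dictionary in Proposition~\ref{prop:Kottwitz-Borovoi-dictionary}.

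\emph{Necessity.} Let $\xi\in H^1(k,G)$ have localization $\xi^S$. By Theorem~\ref{thm:BG-adelic-discrete} the full family $(\xi_v)_{v\in\Omega_k}$ lies in the direct sum. Exactness of \eqref{eq:kottwitz-global-sequence} gives $\lambda_G((\xi_v)_v)=0$. Splitting the sum in \eqref{eq:kottwitz-obstruction-character} into $v\in S$ and $v\notin S$ gives
\[
 \Lambda_{G,S}(\xi^S)=-\sum_{v\in S}\ev_v(\xi_v)\in E_S ,
\]
since each $E_v$ is a subgroup (stated after \eqref{eq:def-Ev}).

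\emph{Sufficiency.} Suppose $\Lambda_{G,S}(\xi^S)=-\sum_{v\in S}\chi_v$ with $\chi_v\in E_v$. We may write it this way because $E_S$ is generated by the subgroups $E_v$ and $S$ is finite. Choose $\xi_v\in H^1(k_v,G)$ with $\ev_v(\xi_v)=\chi_v$ for $v\in S$. The completed family lies in the direct sum and has $\lambda_G=0$. Exactness of the sequence of pointed sets at the middle term then produces a global $\xi\in H^1(k,G)$ with exactly these localizations, and in particular with localization $\xi^S$ away from $S$. This proves \eqref{eq:away-S-image-ES}. The description \eqref{eq:away-S-image-kernel} is a restatement.

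\emph{Surjectivity criterion.} Suppose $E_{\Omega_k\setminus S}\subseteq E_S$. Each $\Lambda_{G,S}(\xi^S)$ is a finite sum of elements of the $E_v$ with $v\notin S$, so it lies in $E_S$. Conversely, suppose the localization map is surjective. For $w\notin S$ and $\chi\in E_w$, choose a local torsor at $w$ realizing $\chi$ and take trivial classes elsewhere. Normalized classes evaluate trivially on trivial torsors, so this family has $\Lambda_{G,S}=\chi$. Surjectivity then forces $\chi\in E_S$. Since $E_{\Omega_k\setminus S}$ is generated by these $\chi$, we get $E_{\Omega_k\setminus S}\subseteq E_S$, which is \eqref{eq:Borovoi-plain-surjectivity}.

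\emph{Comparison with Borovoi.} For reductive $G$, Proposition~\ref{prop:Kottwitz-Borovoi-dictionary} transports $\lambda_v^{\mathrm{Bor}}$ to $\ev_v$ and $\operatorname{im}\Sigma^{\mathrm{Bor}}_A$ to $E_A$. Borovoi's Main Theorem~3.7 and Corollary~3.9, with localization set $\Omega_k\setminus S$, therefore become the statements just proved.

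The main obstacle is the sufficiency step at the places of $S$. We need the local images $E_v$ to be subgroups, so that any element of $E_S$ is realized as $\sum_{v\in S}\ev_v(\xi_v)$ with one torsor per place. At real places this relies on the surjectivity of Borovoi's abelianization map, cited after \eqref{eq:def-Ev}. We also need Kottwitz exactness to include the archimedean places, which is the reference to \cite[Theorem~9.4]{ColliotTheleneFlasque}. I would double-check the sign conventions in that exactness statement, but since everything takes values in groups, a sign change only replaces $\chi_v$ by $-\chi_v$, which is harmless.
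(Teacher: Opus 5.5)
Your proposal is correct and follows the paper's proof essentially step for step. Necessity and sufficiency are both read off exactness of the Kottwitz sequence \eqref{eq:kottwitz-global-sequence}, and the completion at the places of $S$ is chosen using that each $E_v$ is a subgroup. The only cosmetic difference is in the surjectivity criterion: you test single-place families realizing generators of $E_{\Omega_k\setminus S}$, whereas the paper first identifies $\operatorname{im}\Lambda_{G,S}$ with $E_{\Omega_k\setminus S}$; both arguments are valid.
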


\begin{proof}
Suppose $\xi^S$ is induced by a global class $\xi\in H^1(k,G)$.  Let
$(\xi_v)_{v\in S}$ be its localizations at the omitted places.  Exactness of
\eqref{eq:kottwitz-global-sequence} gives
\[
 \sum_{v\notin S}\ev_v(\xi_v)
 +\sum_{v\in S}\ev_v(\xi_v)=0.
\]
The second sum belongs to $E_S$, so
$\Lambda_{G,S}(\xi^S)\in E_S$.

Conversely, suppose \eqref{eq:away-S-image-ES} holds.  By the definition of
$E_S$, choose local classes $\eta_v\in H^1(k_v,G)$, $v\in S$, such that
\[
 \sum_{v\in S}\ev_v(\eta_v)
 =-\Lambda_{G,S}(\xi^S).
\]
The completed adelic family has zero total Kottwitz obstruction, so exactness of
\eqref{eq:kottwitz-global-sequence} gives a global class whose localizations
outside $S$ are $\xi^S$.  This proves
\eqref{eq:away-S-image-kernel}.

The image of $\Lambda_{G,S}$ is $E_{\Omega_k\setminus S}$: every element of the
latter is a finite sum of local evaluation characters and hence is realized by
an away-from-$S$ adelic family.  Therefore every away-from-$S$ family is global
if and only if \eqref{eq:Borovoi-plain-surjectivity} holds.
\end{proof}

\begin{corollary}\label{cor:full-adelic-exact}
The image of the full localization map is exactly the full Brauer--Manin set:
\begin{equation}\label{eq:full-adelic-exactness}
 \operatorname{im}\bigl(BG(k)\longrightarrow BG(\bA_k)\bigr)
 =BG(\bA_k)^{\Br(BG)}.
\end{equation}
Thus, together with Corollary~\ref{thm:full-brauer-exact}, full-Brauer
approximation for $BG$ is an exact local--global theorem for every finite set
of omitted places, including the empty set.
\end{corollary}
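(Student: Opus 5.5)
We must show that the image of $BG(k)\to BG(\bA_k)$ equals $BG(\bA_k)^{\Br(BG)}$. The plan is to read this off the Kottwitz sequence of Theorem~\ref{thm:kottwitz-global-sequence}, after reducing the full Brauer group to its normalized part.

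\emph{Reduction to normalized classes.} By \eqref{eq:Brauer-BG-splitting} we have $\Br(BG)=\Br(k)\oplus\Bre(BG)$. By Lemma~\ref{lem:BM-finite-support}, the Brauer--Manin pairing is additive in the Brauer class. Constant classes $\beta\in\Br(k)$ pull back to $\beta_v$ at every local point, so their total invariant is $\sum_v\inv_v(\beta_v)=0$ by \eqref{eq:global-Brauer-exact-sequence}. Hence an adelic point is orthogonal to $\Br(BG)$ exactly when it is orthogonal to every $b_L$ with $L\in P_G$.

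\emph{Identification with $\lambda_G$.} By Theorem~\ref{thm:BG-adelic-discrete}, with $\Sigma=\varnothing$, an adelic point of $BG$ is a finite-support family $(\xi_v)_v\in\bigoplus_v H^1(k_v,G)$. Formula \eqref{eq:kottwitz-obstruction-character} says that its pairing with $b_L$ is $\lambda_G((\xi_v))(L)$. Orthogonality to all $b_L$ therefore means exactly $\lambda_G((\xi_v))=0$. Thus $BG(\bA_k)^{\Br(BG)}$ is the preimage of the distinguished point under $\lambda_G$.

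\emph{Conclusion.} Exactness of the pointed-set sequence \eqref{eq:kottwitz-global-sequence} says that the image of $H^1(k,G)$ is exactly $\lambda_G^{-1}(0)$. This gives \eqref{eq:full-adelic-exactness}. The inclusion of global classes into the Brauer--Manin set also follows independently from global reciprocity for $\xi^*b\in\Br(k)$.

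For the final sentence, Corollary~\ref{thm:full-brauer-exact} covers non-empty finite $S$, and the present statement is the case $S=\varnothing$. It also agrees with Theorem~\ref{thm:Borovoi-away-S-image} with $S=\varnothing$, since then $E_\varnothing=0$.

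The only delicate point is that exactness in the middle of a sequence of pointed sets is a statement about the fibre over the distinguished element. This is precisely the fibre needed here, so no twisting argument is required.
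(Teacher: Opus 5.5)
Your proof is correct and is essentially the paper's argument. The paper specializes Theorem~\ref{thm:Borovoi-away-S-image} to $S=\varnothing$, where $E_\varnothing=0$; that theorem rests on exactly the ingredients you invoke directly, namely exactness of \eqref{eq:kottwitz-global-sequence}, formula \eqref{eq:kottwitz-obstruction-character}, and global reciprocity for constant classes. The only difference is that the paper also re-derives Corollary~\ref{thm:full-brauer-exact} from the same theorem, whereas you cite it for non-empty $S$.
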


\begin{proof}
For $S=\varnothing$, Theorem~\ref{thm:Borovoi-away-S-image} identifies the
global image with $\Lambda_{G,\varnothing}^{-1}(0)$, which is the full
Brauer--Manin set by \eqref{eq:kottwitz-obstruction-character} and global
reciprocity for constant classes.  More generally, the same theorem says that
an away-from-$S$ evaluation character is global exactly when it can be
cancelled at $S$; this also proves Corollary~\ref{thm:full-brauer-exact}.
\end{proof}

The proof of Theorem~\ref{thm:Borovoi-away-S-image} uses only the exact
sequence of Theorem~\ref{thm:kottwitz-global-sequence}, and therefore applies
to every connected linear algebraic group covered by that theorem.  The
attribution to Borovoi concerns the connected reductive case, which is the scope
of \cite[Main Theorem~3.7]{BorovoiLocalization}; no reduction from a general
connected linear group to its reductive quotient is used here.

\begin{proposition}\label{prop:projected-unramified-set}
For every finite $S\subset\Omega_k$,
\begin{equation}\label{eq:projected-unramified-set}
 BG(\bA_k^S)^{\Brun(BG)}
 =\Lambda_{G,S}^{-1}\bigl(E_S+\Ann(U_G)\bigr).
\end{equation}
\end{proposition}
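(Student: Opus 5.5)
The plan is to unwind the definition of the projected Brauer--Manin set and reduce everything to a statement about characters on the finite group $P_G$. By definition, $\xi^S=(\xi_v)_{v\notin S}$ lies in $BG(\bA_k^S)^{\Brun(BG)}$ if and only if there are local classes $\eta_v\in H^1(k_v,G)$, $v\in S$, such that the completed family $\xi=(\eta_v)_{v\in S}\sqcup\xi^S$, an adelic point by Theorem~\ref{thm:BG-adelic-discrete}, is orthogonal to $\Brun(BG)$. Since $\Brun(BG)\supseteq\Br(k)$, and constant classes pair trivially with every adelic point by global reciprocity (\eqref{eq:global-Brauer-exact-sequence}), the splitting \eqref{eq:Brauer-BG-splitting} and Theorem~\ref{thm:Brun-BG-Sha} reduce orthogonality to $\Brun(BG)$ to orthogonality to $\Brun(BG)\cap\Bre(BG)=\{b_L:L\in U_G\}$.

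Next I will express this orthogonality through the local characters. For $L\in U_G$, the Brauer--Manin sum of $b_L$ at $\xi$ is $\sum_{v}\inv_v(b_L(\xi_v))=\bigl(\sum_{v\in S}\ev_v(\eta_v)+\Lambda_{G,S}(\xi^S)\bigr)(L)$, by \eqref{eq:local-evaluation-character} and \eqref{eq:def-Lambda-GS}; this is the restriction to $U_G$ of the total Kottwitz character \eqref{eq:kottwitz-obstruction-character}. The sums are finite by Lemma~\ref{lem:BM-finite-support}, or directly, since $\xi_v$ is trivial almost everywhere and normalized classes vanish on trivial torsors. Hence $\xi$ is orthogonal to $\Brun(BG)$ exactly when
\[
 \Lambda_{G,S}(\xi^S)+\sum_{v\in S}\ev_v(\eta_v)\in\Ann(U_G).
\]

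It then remains to quantify over the choice of $(\eta_v)_{v\in S}$. As $(\eta_v)$ ranges over $\prod_{v\in S}H^1(k_v,G)$, the sum $\sum_{v\in S}\ev_v(\eta_v)$ ranges over the set $\sum_{v\in S}E_v$ of sums, one term from each $E_v$. Because each $E_v$ is a subgroup (the remark after \eqref{eq:def-Ev}: Kottwitz local duality at finite places, Borovoi's abelianization surjectivity at real places, and triviality at complex places), this set of sums is already the subgroup $E_S$, with no need to repeat places. So $\xi^S$ lies in the projected set if and only if $\Lambda_{G,S}(\xi^S)\in -E_S+\Ann(U_G)=E_S+\Ann(U_G)$, which is \eqref{eq:projected-unramified-set}.

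The main point requiring care is the subgroup property of each $E_v$. Without it, one would only get the set of sums $\sum_{v\in S}E_v$, which need not be closed under addition or negation, and the identification with $E_S$ would fail. This is exactly what the preceding remark supplies. A lesser check is that the pairing formula uses the same normalization of $b_L$ as Theorem~\ref{thm:kottwitz-global-sequence}; in fact the argument needs only the definitions of $\ev_v$ and $U_G$, not the exactness of Kottwitz's sequence.
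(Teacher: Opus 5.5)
Your proof is correct and follows the same route as the paper's. You reduce to the normalized unramified classes $b_L$ with $L\in U_G$, write the Brauer--Manin sum as $(\Lambda_{G,S}(\xi^S)+e)|_{U_G}$ with $e$ ranging over completions at $S$, and conclude that $\Lambda_{G,S}(\xi^S)\in E_S+\Ann(U_G)$. You also make explicit that each $E_v$ is a subgroup, so one completion per place of $S$ realizes every element of $E_S$; the paper's phrase ``every $e\in E_S$ is realized by local torsors by definition'' tacitly relies on exactly this.
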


\begin{proof}
Let $\xi^S\in BG(\bA_k^S)$ and put
\[
 c:=\Lambda_{G,S}(\xi^S).
\]
A completion at the places of $S$ contributes an arbitrary $e\in E_S$.
The completed family is orthogonal to the normalized unramified classes
exactly when $(c+e)|_{U_G}=0$ for some $e\in E_S$, or equivalently when
$c\in E_S+\Ann(U_G)$.  Every $e\in E_S$ is realized by local torsors by
definition, and constant classes impose no condition.  This proves
\eqref{eq:projected-unramified-set}.
\end{proof}

\begin{theorem}\label{thm:S-local-criterion}
Let $S\subset\Omega_k$ be finite.  The following are equivalent:
\begin{enumerate}[(i)]
\item
\begin{equation}\label{eq:S-local-condition}
 \Ann(U_G)\subseteq E_S;
\end{equation}
\item the localization map
\begin{equation}\label{eq:S-local-surjection}
 BG(k)\longrightarrow BG(\bA_k^S)^{\Brun(BG)}
\end{equation}
is surjective.
\end{enumerate}
Consequently, $BG$ satisfies strong approximation off $S$ with respect to its
ordinary unramified Brauer group if and only if
\eqref{eq:S-local-condition} holds.
\end{theorem}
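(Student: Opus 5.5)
The proof compares two descriptions of subsets of $BG(\bA_k^S)$, both expressed through the evaluation map $\Lambda_{G,S}$. By Theorem~\ref{thm:Borovoi-away-S-image} (whose proof uses only the exact sequence of Theorem~\ref{thm:kottwitz-global-sequence}, so it applies to every connected $G$), the global image is $\Lambda_{G,S}^{-1}(E_S)$. By Proposition~\ref{prop:projected-unramified-set}, the projected unramified Brauer--Manin set is $\Lambda_{G,S}^{-1}(E_S+\Ann(U_G))$. Since the global image always lies inside this set, surjectivity in (ii) says exactly that
\[
 \Lambda_{G,S}^{-1}\bigl(E_S+\Ann(U_G)\bigr)\subseteq\Lambda_{G,S}^{-1}(E_S).
\]
So everything reduces to a statement about the image of $\Lambda_{G,S}$ in the finite group $P_G^\vee$.

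For (i)$\Rightarrow$(ii): if $\Ann(U_G)\subseteq E_S$, then $E_S+\Ann(U_G)=E_S$, the two preimages coincide, and surjectivity follows at once.

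For (ii)$\Rightarrow$(i) we must produce, for each $\chi\in\Ann(U_G)$, an away-from-$S$ family $\xi^S$ with $\Lambda_{G,S}(\xi^S)=\chi$. Granting this, (ii) forces $\chi\in E_S$. By the last paragraph of the proof of Theorem~\ref{thm:Borovoi-away-S-image}, the image of $\Lambda_{G,S}$ is $E_{\Omega_k\setminus S}$, so the key step is to show
\[
 \Ann(U_G)\subseteq E_{\Omega_k\setminus S}.
\]
I would deduce this from evaluation-generation, Theorem~\ref{thm:abstract-evaluation-formal}(a). Apply it to $\cX=BG$, using the Brauer-detecting adelic atlas of Proposition~\ref{prop:special-quotient-unramified} with $X=\Spec k$, so that $Y=\SL(V)/G$, which is smooth, separated and geometrically integral. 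Take $B=\Bre(BG)\simeq P_G$, which is finite by Proposition~\ref{prop:Pic-Br-BG}. Then $B_{\mathrm{un}}$ corresponds to $U_G$ by Theorem~\ref{thm:Brun-BG-Sha}, and the characters $\ev_{v,x_v}$ are exactly the $\ev_v(\xi_v)$. Part (a) gives a finite $\Sigma_B$ such that, for $F=\Sigma_B\cup S$, the evaluation characters at places $v\notin F$ generate $\Ann(U_G)$. Since each $E_v$ is a subgroup, this yields $\Ann(U_G)\subseteq\sum_{v\notin F}E_v\subseteq E_{\Omega_k\setminus S}$. Any element of this sum is a finite sum of local characters, realized by a family supported at finitely many places outside $S$. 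That is a legitimate adelic point by Theorem~\ref{thm:BG-adelic-discrete}, and it gives the required $\xi^S$.

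The final sentence of the theorem follows from discreteness (Theorem~\ref{thm:BG-adelic-discrete}): density in the discrete set $BG(\bA_k^S)^{\Brun(BG)}$ is the same as surjectivity onto it.

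The main obstacle is the necessity direction, specifically justifying that Theorem~\ref{thm:abstract-evaluation-formal}(a) applies to $BG$. This requires the finite evaluation criterion via Proposition~\ref{prop:Brauer-detecting-unramified}, together with a careful match of the normalizations: the identification $\Bre(BG)\simeq P_G$, the identification $B^\vee=P_G^\vee$, and the fact that constant classes contribute nothing. If the atlas argument proved delicate, a fallback is to argue directly with local Kottwitz duality and Chebotarev, picking places whose decomposition groups are cyclic, but I would try the formal-lemma route first.
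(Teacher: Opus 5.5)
Your proposal is correct and follows the paper's proof essentially step for step. For sufficiency, $E_S+\Ann(U_G)$ collapses to $E_S$ in Theorem~\ref{thm:Borovoi-away-S-image} and Proposition~\ref{prop:projected-unramified-set}. For necessity, evaluation-generation (Theorem~\ref{thm:abstract-evaluation-formal}(a)), applied via the atlas $\SL(V)/G\to BG$ with $B=\Bre(BG)\simeq P_G$, realizes each $\chi\in\Ann(U_G)$ as $\Lambda_{G,S}$ of a finitely supported away-from-$S$ family; surjectivity then forces $\chi\in E_S$. The only difference is cosmetic: you use that each $E_v$ is a subgroup to merge characters at the same place, whereas the paper extracts pairwise distinct finite places directly.
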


\begin{proof}
If \eqref{eq:S-local-condition} holds, then
\[
 E_S+\Ann(U_G)=E_S.
\]
Theorem~\ref{thm:Borovoi-away-S-image} and
Proposition~\ref{prop:projected-unramified-set} therefore identify both the
global image and the projected unramified Brauer--Manin set with
$\Lambda_{G,S}^{-1}(E_S)$.

Conversely, suppose \eqref{eq:S-local-surjection} is surjective and let
$\chi\in\Ann(U_G)$.  Proposition~\ref{prop:quotient-evaluation-generation},
applied to the presentation $BG\simeq[(H/G)/H]$, shows that the
evaluation-generation formula applies.  Indeed, since $G$ is a closed connected
subgroup of the smooth geometrically integral group $H$ in characteristic zero,
the variety $H/G$ is smooth, separated, and geometrically integral, and the
quotient atlas is Brauer-detecting.  Apply
\eqref{eq:abstract-evaluation-generation} to the finite group
\[
 B=\Bre(BG)\simeq P_G
\]
and its unramified subgroup $B_{\mathrm{un}}=U_G$.  Taking the exceptional set
to contain $S$ and all archimedean places, we obtain pairwise distinct finite
places $v_1,\ldots,v_r\notin S$ and local torsors
$\eta_i\in H^1(k_{v_i},G)$ such that
\[
 \chi=\sum_{i=1}^r\ev_{v_i}(\eta_i).
\]
Take the trivial torsor at every other place outside $S$.  This gives an
away-from-$S$ adelic point $\eta^S$ with
\[
 \Lambda_{G,S}(\eta^S)=\chi.
\]
By Proposition~\ref{prop:projected-unramified-set}, the point $\eta^S$ belongs
to the projected unramified Brauer--Manin set.  Surjectivity makes it global,
and Theorem~\ref{thm:Borovoi-away-S-image} then gives $\chi\in E_S$.  Hence
$\Ann(U_G)\subseteq E_S$.

Finally, Theorem~\ref{thm:BG-adelic-discrete} makes the target a discrete space,
so surjectivity is equivalent to strong approximation.
\end{proof}

\begin{remark}\label{rem:Borovoi-localization-criterion}
The raw localization image theorem
\eqref{eq:away-S-image-kernel} is Borovoi's Main Theorem~3.7 in
Brauer--Manin language; it should not be regarded as a new localization theorem.
The new ordinary unramified content is the combination of the computation
\[
 \Brun(BG)/\Br(k)\simeq\Sha^1_{\mathrm{cyc}}(k,\widehat G),
\]
the evaluation-generation theorem of Section~4, the projected-set formula
\eqref{eq:projected-unramified-set}, and the exact criterion
\eqref{eq:S-local-condition}.
\end{remark}

The following examples show that the condition in
Theorem~\ref{thm:S-local-criterion} is genuinely necessary.

\begin{proposition}\label{prop:unramified-counterexamples}
The following assertions hold.
\begin{enumerate}[(a)]
\item Over $k=\bQ$, the stack $B\PGL_3$ does not satisfy strong approximation
off $S=\{\infty\}$ with respect to its unramified Brauer group.
\item Let
\[
 K=\bQ(\sqrt2),
 \qquad
 T=\Res^1_{K/\bQ}\Gm,
 \qquad
 S=\{\infty,7\}.
\]
Then
\[
 \Brun(BT)=\Br(\bQ),
\]
but the localization map
\[
 BT(\bQ)\longrightarrow BT(\bA_\bQ^S)
\]
is not surjective.  Hence unramified Brauer approximation can fail even when
$S$ contains a finite place.
\end{enumerate}
\end{proposition}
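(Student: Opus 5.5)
The plan is to reduce both assertions to results already established, using Corollary~\ref{cor:characterfree-Brun-BG} and Remark~\ref{rem:norm-one-unramified} to identify the unramified Brauer group with the constant classes. Once $\Brun$ is constant, the unramified Brauer--Manin set coincides with the whole adelic space. Strong approximation with respect to $\Brun$ then reduces to plain approximation, which by Theorem~\ref{thm:BG-adelic-discrete} is plain surjectivity of localization.

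For (a), Corollary~\ref{cor:characterfree-Brun-BG} gives $\Brun(B\PGL_3)=\Br(\bQ)$. Constant classes impose no condition: for $b\in\Br(\bQ)$, $\sum_v\inv_v b(x_v)=\sum_v\inv_v(b_v)=0$ by the reciprocity sequence \eqref{eq:global-Brauer-exact-sequence}. Hence
\[
 B\PGL_3(\bA_\bQ^S)^{\Brun(B\PGL_3)}=B\PGL_3(\bA_\bQ^S).
\]
Corollary~\ref{prop:PGL3-no-plain-approximation} shows that the localization map to this space is not surjective. Discreteness then turns non-surjectivity into failure of density. As a cross-check with Theorem~\ref{thm:S-local-criterion}: here $U_G=0$, so $\Ann(U_G)=P_G^\vee\simeq\bZ/3\bZ$, while $E_\infty=0$.

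For (b), the Galois action on $\widehat T$ factors through the cyclic group $\Gal(K/\bQ)$. As in Remark~\ref{rem:norm-one-unramified}, $\Sha^1_{\mathrm{cyc}}(\bQ,\widehat T)=0$, because $\Gamma$ is itself one of the cyclic subgroups in \eqref{eq:def-Sha-cyc}. Theorem~\ref{thm:Brun-BG-Sha} therefore gives $\Brun(BT)=\Br(\bQ)$. Non-surjectivity off $S=\{\infty,7\}$ is Corollary~\ref{prop:norm-one-torus-no-plain-approximation}: both places split, so $\lcm(d_\infty,d_7)=1<2$. As before, the constant unramified group means the projected unramified set is the whole space, so unramified approximation fails even though $7$ is finite.

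No step is a real obstacle. The only point needing care is to justify explicitly that constant Brauer classes cut out nothing, which uses global reciprocity together with finite support (Lemma~\ref{lem:BM-finite-support}), so that a projected Brauer--Manin set for $\Br(k)$ is all of $BG(\bA_k^S)$.
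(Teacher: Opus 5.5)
Your proposal is correct and follows essentially the same route as the paper's proof. In both, $\Brun$ is identified with the constant classes: via Corollary~\ref{cor:characterfree-Brun-BG} for $B\PGL_3$, and for $BT$ via the vanishing of $\Sha^1_{\mathrm{cyc}}(\bQ,\widehat T)$ (since $\Gamma$ is itself cyclic) together with Theorem~\ref{thm:Brun-BG-Sha}. Since constant classes impose no Brauer--Manin condition, the statement then reduces to Corollaries~\ref{prop:PGL3-no-plain-approximation} and~\ref{prop:norm-one-torus-no-plain-approximation}. Your explicit reciprocity justification and the cross-check against Theorem~\ref{thm:S-local-criterion} are harmless additions.
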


\begin{proof}
For (a), Corollary~\ref{cor:characterfree-Brun-BG} gives
$\Brun(B\PGL_3)=\Br(\bQ)$.  Constant classes impose no Brauer--Manin
restriction, so the assertion is
Corollary~\ref{prop:PGL3-no-plain-approximation}.

For (b), let $\Gamma=\Gal(K/\bQ)\simeq C_2$.  The character lattice of the
norm-one torus factors through the cyclic group $\Gamma$, which itself occurs
in the definition of $\Sha^1_{\mathrm{cyc}}$.  Thus this group vanishes, and
Theorem~\ref{thm:Brun-BG-Sha} gives $\Brun(BT)=\Br(\bQ)$.  Non-surjectivity is
Proposition~\ref{prop:norm-one-torus-no-plain-approximation}.
\end{proof}

\begin{proposition}
\label{prop:local-Picard-criterion}
For a finite place $v$, let
\[
 r_v:P_G=\Pic(G)\longrightarrow\Pic(G_{k_v})
\]
be the restriction map.  Then
\begin{equation}\label{eq:Ev-ann-kernel-rv}
 E_v=\Ann(\ker r_v)\subseteq P_G^\vee.
\end{equation}
Define
\[
 E_S^{\mathrm{fin}}:=
 \sum_{\substack{v\in S\\v\text{ finite}}}E_v
\]
and put
\begin{equation}\label{eq:def-KS-Picard}
 K_S:=\bigcap_{\substack{v\in S\\v\text{ finite}}}\ker r_v,
\end{equation}
with the convention that an empty intersection is $P_G$.  Then
\begin{equation}\label{eq:ES-ann-KS}
 E_S^{\mathrm{fin}}=\Ann(K_S).
\end{equation}
In particular,
\begin{equation}\label{eq:Picard-local-condition}
 K_S\subseteq U_G
\end{equation}
is a computable sufficient condition for
\eqref{eq:S-local-condition}.  It is equivalent to that condition whenever
$E_v=0$ for every archimedean $v\in S$, in particular when $k$ is totally
imaginary.  If $K_S=0$, then the plain localization map
\[
 BG(k)\longrightarrow BG(\bA_k^S)
\]
is surjective.  A sufficient condition is that $r_{v_0}$ be injective for some
finite place $v_0\in S$.
\end{proposition}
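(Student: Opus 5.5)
The core step is the local identity \eqref{eq:Ev-ann-kernel-rv}; everything else follows from finite duality.

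Fix a finite place $v$. Local Kottwitz duality \cite[Theorem~9.1(ii)]{ColliotTheleneFlasque} gives a bijection
\[
 H^1(k_v,G)\xrightarrow{\sim}\Pic(G_{k_v})^\vee,
\]
as already used in Proposition~\ref{prop:local-BG-finite-discrete}. The compatibility recorded in the proof of Theorem~\ref{thm:kottwitz-global-sequence} gives
\[
 \langle\xi_v,L_v\rangle_v=\inv_v\bigl(b_L(\xi_v)\bigr),
\]
with $L_v=r_v(L)$. Hence $\ev_v$ factors as
\[
 H^1(k_v,G)\xrightarrow{\sim}\Pic(G_{k_v})^\vee\xrightarrow{r_v^\vee}P_G^\vee .
\]
This gives $E_v=\operatorname{im}(r_v^\vee)$. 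For finite abelian groups the image of the dual of a homomorphism is the annihilator of its kernel, since $\bQ/\bZ$ is injective and $(\ker r_v)^\vee$ is a quotient of $P_G^\vee$. This proves $E_v=\Ann(\ker r_v)$.

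The one point requiring care is that the Kottwitz pairing and the Brauer evaluation agree with compatible sign conventions. A sign discrepancy would not change the image, so this causes no problem.

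For \eqref{eq:ES-ann-KS}, use that in finite duality $\Ann$ turns intersections into sums:
\[
 \sum_i\Ann(K_i)=\Ann\Bigl(\bigcap_i K_i\Bigr).
\]
The inclusion $\subseteq$ is clear. For equality, take annihilators once more and use double duality $A\simeq A^{\vee\vee}$. When there are no finite places in $S$, both sides are $0=\Ann(P_G)$.

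For the criterion, first note $E_S^{\mathrm{fin}}\subseteq E_S$. If $K_S\subseteq U_G$, then
\[
 \Ann(U_G)\subseteq\Ann(K_S)=E_S^{\mathrm{fin}}\subseteq E_S,
\]
which is \eqref{eq:S-local-condition}. When $E_v=0$ for every archimedean $v\in S$, we have $E_S=E_S^{\mathrm{fin}}=\Ann(K_S)$. The condition $\Ann(U_G)\subseteq\Ann(K_S)$ is then equivalent to $K_S\subseteq U_G$ by double duality. If $k$ is totally imaginary, every archimedean place is complex, where $H^1$ is trivial, so this case applies.

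If $K_S=0$, then $E_S\supseteq\Ann(0)=P_G^\vee$. In particular $E_{\Omega_k\setminus S}\subseteq E_S$, and the plain localization map is surjective by \eqref{eq:Borovoi-plain-surjectivity} of Theorem~\ref{thm:Borovoi-away-S-image}. Finally, if $r_{v_0}$ is injective for some finite $v_0\in S$, then $K_S\subseteq\ker r_{v_0}=0$.
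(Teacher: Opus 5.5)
Your proposal is correct and follows the paper's proof essentially step for step: Kottwitz duality plus the Colliot-Th\'el\`ene--Xu compatibility gives $E_v=\operatorname{im}(r_v^\vee)=\Ann(\ker r_v)$, finite duality converts the sum of annihilators into $\Ann(K_S)$, and Theorem~\ref{thm:Borovoi-away-S-image} gives plain surjectivity when $K_S=0$. The only cosmetic difference is that you prove the converse (when the archimedean images vanish) directly by double duality, whereas the paper cites Theorem~\ref{thm:S-local-criterion}; this amounts to the same argument.
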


\begin{proof}
Local Kottwitz duality and \cite[Proposition~2.9]{ColliotTheleneXu} identify
$E_v$ with $\operatorname{im}(r_v^\vee)$.  Since $\bQ/\bZ$ is injective, this
image is $\Ann(\ker r_v)$, proving \eqref{eq:Ev-ann-kernel-rv}.  The finite
duality identity, for subgroups of a finite abelian group,
\[
 \Ann(A_1)+\Ann(A_2)=\Ann(A_1\cap A_2)
\]
then gives \eqref{eq:ES-ann-KS}.  As annihilators reverse inclusions,
\[
 \Ann(U_G)\subseteq E_S^{\mathrm{fin}}=\Ann(K_S)
 \quad\Longleftrightarrow\quad
 K_S\subseteq U_G.
\]
This proves the sufficient condition; when the archimedean images vanish,
$E_S=E_S^{\mathrm{fin}}$, so Theorem~\ref{thm:S-local-criterion} gives the
converse.  If $K_S=0$, then $E_S^{\mathrm{fin}}=P_G^\vee$, and
Theorem~\ref{thm:Borovoi-away-S-image} gives plain surjectivity.  The final
assertion is immediate.
\end{proof}

\subsection{A genuinely intermediate unramified obstruction}
\label{subsec:biquadratic-Sha-example}

Preceding examples have $U_G=0$, where orthogonality to the
unramified Brauer group imposes no condition.  We now give an example in which
\[
 0\subsetneq U_G\subsetneq P_G.
\]
It exhibits the genuinely intermediate situation in which unramified Brauer
approximation holds, although plain strong approximation fails.

Let
\[
 L=\bQ(\sqrt2,\sqrt3),
 \qquad
 \Gamma=\Gal(L/\bQ)\simeq C_2\times C_2,
\]
and consider the three-dimensional torus
\begin{equation}\label{eq:def-biquadratic-quotient-torus}
 T:=\Res_{L/\bQ}\Gm/\Gm,
\end{equation}
where $\Gm$ is embedded diagonally.

\begin{theorem}
\label{thm:biquadratic-Sha-example}
For the torus \eqref{eq:def-biquadratic-quotient-torus}, the following hold.
\begin{enumerate}[(i)]
\item There are natural identifications
\[
 P_T=\Pic(T)\simeq\bZ/4\bZ,
 \qquad
 U_T=\Sha^1_{\mathrm{cyc}}(\bQ,\widehat T)
      \simeq 2\bZ/4\bZ\simeq\bZ/2\bZ.
\]
Consequently,
\begin{equation}\label{eq:strict-Brauer-tower-biquadratic}
 \Br(\bQ)
 \subsetneq \Brun(BT)
 \subsetneq \Br(BT),
\end{equation}
and
\[
 \Br(BT)/\Br(\bQ)\simeq\bZ/4\bZ,
 \qquad
 \Brun(BT)/\Br(\bQ)\simeq\bZ/2\bZ.
\]

\item Let $v$ be a finite place, choose a place of $L$ above $v$, and let
$D_v\subseteq\Gamma$ be its decomposition group.  Under the identification
$P_T\simeq\bZ/4\bZ$, the local restriction map
\[
 r_v:P_T\longrightarrow\Pic(T_{\bQ_v})
\]
has the following form:
\[
\begin{array}{c|c|c|c}
 |D_v| & \Pic(T_{\bQ_v}) & \ker(r_v) & E_v\\ \hline
 1 & 0 & \bZ/4\bZ & 0\\
 2 & \bZ/2\bZ & 2\bZ/4\bZ & \Ann(U_T)\\
 4 & \bZ/4\bZ & 0 & P_T^\vee.
\end{array}
\]
In the middle row, $r_v$ is reduction modulo $2$.

\item Put
\[
 S_0=\{\infty,23\},
 \qquad
 S_1=\{\infty,5\},
 \qquad
 S_2=\{\infty,3\}.
\]
Then:
\begin{enumerate}[(a)]
\item strong approximation with respect to $\Brun(BT)$ fails off $S_0$;
\item the map
\[
 BT(\bQ)\longrightarrow
 BT(\bA_{\bQ}^{S_1})^{\Brun(BT)}
\]
is surjective, but the plain localization map
\[
 BT(\bQ)\longrightarrow BT(\bA_{\bQ}^{S_1})
\]
is not surjective;
\item the plain localization map
\[
 BT(\bQ)\longrightarrow BT(\bA_{\bQ}^{S_2})
\]
is surjective.
\end{enumerate}
Thus the three choices of omitted places display, respectively, failure of
unramified Brauer approximation, validity of unramified Brauer approximation
but failure of plain approximation, and validity of plain approximation.
\end{enumerate}
\end{theorem}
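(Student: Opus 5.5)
The plan is to reduce everything to a computation with the character lattice $\widehat T$ and the finite groups $H^1(D,\widehat T)$ for subgroups $D\subseteq\Gamma$. Then part (iii) follows from Proposition~\ref{prop:local-Picard-criterion} and Theorem~\ref{thm:Borovoi-away-S-image}.

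Since $T=\Res_{L/\bQ}\Gm/\Gm$, a character of $T$ is a character of $\Res_{L/\bQ}\Gm$ that is trivial on the diagonal $\Gm$. This gives an exact sequence of $\Gamma$-lattices
\[
 0\longrightarrow\widehat T\longrightarrow\bZ[\Gamma]\xrightarrow{\ \varepsilon\ }\bZ\longrightarrow0,
\]
with $\varepsilon$ the augmentation. For a torus, $\Pic(T)\simeq H^1(\bQ,\widehat T)\simeq H^1(\Gamma,\widehat T)$, as in \eqref{eq:Pic-cyclic-norm-one} and \cite[Proposition~6.10]{Sansuc}. More generally, for any subgroup $D\subseteq\Gamma$, Shapiro's lemma gives $H^1(D,\bZ[\Gamma])=0$. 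The long exact sequence then yields
\[
 H^1(D,\widehat T)\simeq\operatorname{coker}\bigl(\bZ[\Gamma]^D\xrightarrow{\varepsilon}\bZ\bigr)=\bZ/|D|\bZ,
\]
because $\bZ[\Gamma]^D$ is spanned by $D$-orbit sums, each of augmentation $|D|$.

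These identifications are natural in $D$. Restriction $H^1(D,\widehat T)\to H^1(D',\widehat T)$ for $D'\subseteq D$ corresponds, through the identity on $\bZ$, to the reduction map $\bZ/|D|\to\bZ/|D'|$. Taking $D=\Gamma$ gives $P_T\simeq\bZ/4\bZ$. The cyclic subgroups of $\Gamma$ are the trivial group and the three subgroups of order $2$, so $\Sha^1_{\mathrm{cyc}}$ is the kernel of reduction modulo $2$, namely $2\bZ/4\bZ$. Theorem~\ref{thm:Brun-BG-Sha} and Proposition~\ref{prop:Pic-Br-BG} then give (i), including the strict tower \eqref{eq:strict-Brauer-tower-biquadratic}.

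For (ii), I use $\Pic(T_{\bQ_v})\simeq H^1(D_v,\widehat T)$, and identify $r_v$ with restriction from $\Gamma$ to $D_v$, i.e.\ reduction $\bZ/4\to\bZ/|D_v|$. This gives the columns for $\Pic(T_{\bQ_v})$ and $\ker r_v$ in the table. The column for $E_v$ then follows from $E_v=\Ann(\ker r_v)$ in Proposition~\ref{prop:local-Picard-criterion}.

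The main point needing care is that the identification $\Pic\simeq H^1(\,\cdot\,,\widehat T)$ is compatible with base change from $\bQ$ to $\bQ_v$. I would take this from the naturality of Sansuc's exact sequence in the base field, together with $\Gal(\bar\bQ_v/\bQ_v)\to D_v$. The same naturality is what ensures that $U_T$ in the middle row is exactly $\ker r_v$.

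For (iii), I first show that $E_\infty=0$. Since $L$ is totally real, $\infty$ splits completely, so $T_\bR\simeq\Gm^3$ and $H^1(\bR,T)=0$ by Hilbert~90. Hence $E_S=E_S^{\mathrm{fin}}$ in all three cases, and Proposition~\ref{prop:local-Picard-criterion} turns the unramified criterion into $K_S\subseteq U_T$. I then compute decomposition groups:
\begin{itemize}
\item At $23$, both $2=5^2$ and $3=7^2$ are squares mod $23$, so $23$ splits completely. Thus $K_{S_0}=P_T\not\subseteq U_T$, and (a) follows from Theorem~\ref{thm:S-local-criterion}.
\item At $5$, neither $2$ nor $3$ is a square mod $5$, but $6$ is. So $5$ is unramified with $|D_5|=2$, giving $K_{S_1}=U_T$ and the unramified surjectivity in (b).
\item At $3$, the prime ramifies in $\bQ(\sqrt3)$ and is inert in $\bQ(\sqrt2)$, since $2$ is not a square mod $3$. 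Hence $D_3=\Gamma$, $E_3=P_T^\vee$, and $K_{S_2}=0$, which gives plain surjectivity in (c).
\end{itemize}
Finally, plain failure in (b) comes from \eqref{eq:Borovoi-plain-surjectivity}. The place $3\notin S_1$ contributes $E_3=P_T^\vee$, which is not contained in $E_{S_1}=\Ann(U_T)$.
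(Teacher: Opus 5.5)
Your proposal is correct and takes essentially the same route as the paper. You use the augmentation sequence together with $H^1(D,\bZ[\Gamma])=0$ to get $H^1(D,\widehat T)\simeq\bZ/|D|\bZ$, with restriction acting as reduction; you then apply Proposition~\ref{prop:local-Picard-criterion} with the decomposition groups at $\infty$, $23$, $5$, $3$. The differences are cosmetic: you handle all subgroups $D$ uniformly, and you deduce plain failure off $S_1$ from the criterion $E_{\Omega_k\setminus S}\subseteq E_S$ of Theorem~\ref{thm:Borovoi-away-S-image}, whereas the paper exhibits the explicit local torsor at $3$ with obstruction character a generator $\psi$.
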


\begin{proof}
Dualizing the exact sequence
\[
 1\longrightarrow\Gm
 \longrightarrow\Res_{L/\bQ}\Gm
 \longrightarrow T\longrightarrow1
\]
gives an exact sequence of $\Gamma$-lattices
\begin{equation}\label{eq:augmentation-sequence-biquadratic}
 0\longrightarrow\widehat T
 \longrightarrow\bZ[\Gamma]
 \xrightarrow{\varepsilon}\bZ
 \longrightarrow0,
\end{equation}
where $\varepsilon$ is the augmentation map.  Hence
\[
 \widehat T=I_\Gamma:=\ker(\varepsilon),
\]
the augmentation ideal.

Let
\[
 N_\Gamma:=\sum_{\gamma\in\Gamma}\gamma\in\bZ[\Gamma].
\]
Then $\bZ[\Gamma]^\Gamma=\bZ N_\Gamma$, and
$\varepsilon(N_\Gamma)=4$.  Since $\bZ[\Gamma]$ is an induced lattice,
$H^1(\Gamma,\bZ[\Gamma])=0$.  Taking cohomology in
\eqref{eq:augmentation-sequence-biquadratic} therefore gives
\begin{equation}\label{eq:H1-Gamma-I-Z4}
 H^1(\Gamma,I_\Gamma)\simeq\bZ/4\bZ.
\end{equation}
For every subgroup $C\subset\Gamma$ of order $2$, the restricted permutation
lattice is
\[
 \bZ[\Gamma]|_C\simeq\bZ[C]\oplus\bZ[C].
\]
The image of
$\bZ[\Gamma]^C\to\bZ$ is therefore $2\bZ$, and the same argument gives
\begin{equation}\label{eq:H1-C-I-Z2}
 H^1(C,I_\Gamma)\simeq\bZ/2\bZ.
\end{equation}
Naturality of the connecting homomorphisms identifies the restriction map
from \eqref{eq:H1-Gamma-I-Z4} to \eqref{eq:H1-C-I-Z2} with reduction modulo
$2$.

The action on $I_\Gamma$ factors through $\Gamma$.  Inflation identifies
$H^1(\bQ,I_\Gamma)$ with $H^1(\Gamma,I_\Gamma)$: the kernel of
$\Gal(\bar{\bQ}/\bQ)\twoheadrightarrow\Gamma$ acts trivially, and every
continuous homomorphism from that profinite kernel to the torsion-free lattice
$I_\Gamma$ is zero.  For a torus one has
\[
 \Pic(T)\simeq H^1(\bQ,\widehat T);
\]
this is the torus case of Sansuc's exact sequence
\cite[Proposition~6.10]{Sansuc}.  Thus
\[
 P_T\simeq H^1(\Gamma,I_\Gamma)\simeq\bZ/4\bZ.
\]
The cyclic subgroups of $\Gamma$ are the trivial group and its three subgroups
of order $2$.  Since restriction to every order-$2$ subgroup is reduction
modulo $2$, Definition~\eqref{eq:def-Sha-cyc} gives
\begin{equation}\label{eq:Sha-cyc-biquadratic-computation}
 U_T=\Sha^1_{\mathrm{cyc}}(\bQ,I_\Gamma)
 =2\bZ/4\bZ\simeq\bZ/2\bZ.
\end{equation}
Proposition~\ref{prop:Pic-Br-BG} and
Theorem~\ref{thm:Brun-BG-Sha} now prove (i).

For a finite place $v$, inflation--restriction similarly identifies
\[
 \Pic(T_{\bQ_v})
 \simeq H^1(\bQ_v,I_\Gamma)
 \simeq H^1(D_v,I_\Gamma),
\]
and the map $r_v$ is restriction from $\Gamma$ to $D_v$.  If $D_v=1$, the
target is zero.  If $|D_v|=2$, the preceding calculation shows that the target
is $\bZ/2\bZ$ and that $r_v$ is reduction modulo $2$.  If $D_v=\Gamma$, the
map is the identity.  The description of $E_v$ follows from
Proposition~\ref{prop:local-Picard-criterion}, proving (ii).

We now determine the relevant decomposition groups.  The extension $L/\bQ$ is
totally real, so the decomposition group at $\infty$ is trivial and
$E_\infty=0$.  The prime $23$ splits completely, since
\[
 5^2\equiv2\pmod{23},
 \qquad
 7^2\equiv3\pmod{23}.
\]
Thus $D_{23}=1$ and $E_{23}=0$.  At the prime $5$, both $2$ and $3$ are
non-squares modulo $5$.  The Frobenius acts non-trivially on both quadratic
subextensions, so $D_5$ has order $2$.  Hence
\[
 E_5=\Ann(U_T).
\]
Finally, $\bQ_3(\sqrt2)/\bQ_3$ is the unramified quadratic extension, while
$\bQ_3(\sqrt3)/\bQ_3$ is ramified.  These two quadratic extensions are
distinct, so their compositum has degree $4$.  Consequently
$D_3=\Gamma$ and
\[
 E_3=P_T^\vee.
\]

For $S_0=\{\infty,23\}$ we have $E_{S_0}=0$, whereas
$\Ann(U_T)$ has order $2$.  The exact criterion of
Theorem~\ref{thm:S-local-criterion} therefore shows that unramified Brauer
approximation fails off $S_0$.

For $S_1=\{\infty,5\}$, one has
\[
 E_{S_1}=E_5=\Ann(U_T),
\]
so Theorem~\ref{thm:S-local-criterion} gives the asserted surjection onto the
unramified Brauer--Manin set.  To see both that this set is proper and that
plain approximation fails, choose a generator
\[
 \psi\in P_T^\vee\simeq\bZ/4\bZ,
 \qquad
 \psi(1\bmod4)=\frac14\bmod\bZ.
\]
Since $E_3=P_T^\vee$, there is a local torsor
$\eta_3\in H^1(\bQ_3,T)$ with
\[
 \ev_3(\eta_3)=\psi.
\]
Take the trivial torsor at every other place and let $\eta$ be the resulting
full adelic point.  Its obstruction character is $\psi$.  Since
\[
 \psi(2\bmod4)=\frac12\ne0,
\]
its projection $\eta^{S_1}$ is not in the projected unramified
Brauer--Manin set.  Indeed, changing a completion at a place of $S_1$ changes
the obstruction only by an element of
$E_{S_1}=\Ann(U_T)$, and therefore cannot change its non-zero restriction to
$U_T$.  Thus
\[
 BT(\bA_\bQ^{S_1})^{\Brun(BT)}
 \subsetneq BT(\bA_\bQ^{S_1}).
\]
Moreover, $\eta^{S_1}$ cannot be the localization of a global torsor: a global
completion would require a correcting character $-\psi$ at the places in
$S_1$, but
\[
 E_{S_1}=\Ann(U_T)=\{0,2\psi\}
\]
does not contain $-\psi$.  Exactness of
\eqref{eq:kottwitz-global-sequence} therefore proves failure of plain
surjectivity off $S_1$.

For $S_2=\{\infty,3\}$, one has $E_{S_2}=P_T^\vee$.  Every Kottwitz
obstruction can therefore be cancelled at the prime $3$, and the plain
localization map is surjective by
Proposition~\ref{prop:local-Picard-criterion}.  This proves (iii).
\end{proof}

\subsection{Further examples of the local criterion}
\label{subsec:further-local-examples}

\begin{example}
\label{ex:collective-local-detection}
Let $K_1/k$ and $K_2/k$ be linearly disjoint cyclic extensions of degrees
$m_1,m_2>1$, and put
\[
 T_i:=\Res^1_{K_i/k}\Gm,
 \qquad
 T:=T_1\times T_2.
\]
Write $\Gamma_i=\Gal(K_i/k)$.  By Chebotarev's density theorem, there are finite
unramified places $v_1,v_2$ such that the Frobenius at $v_1$ is
$(\gamma_1,1)\in\Gamma_1\times\Gamma_2$, with $\gamma_1$ a generator, while
the Frobenius at $v_2$ is $(1,\gamma_2)$, with $\gamma_2$ a generator.  Thus
$v_1$ has full decomposition group in $K_1$ and splits completely in $K_2$;
$v_2$ has the opposite behaviour.

The computation in Theorem~\ref{thm:cyclic-norm-one-criterion} gives
\[
 \Pic(T)\simeq\bZ/m_1\bZ\oplus\bZ/m_2\bZ,
 \qquad
 U_T=0.
\]
At $v_1$, the restriction map on the first summand is injective and the second
summand restricts to zero, so
\[
 \ker r_{v_1}=0\oplus\bZ/m_2\bZ.
\]
Similarly,
\[
 \ker r_{v_2}=\bZ/m_1\bZ\oplus0.
\]
Neither place by itself detects the whole Picard group, but
\[
 \ker r_{v_1}\cap\ker r_{v_2}=0.
\]
It follows from Proposition~\ref{prop:local-Picard-criterion} that, for every
finite set $S$ containing $v_1$ and $v_2$,
\[
 BT(k)\longrightarrow BT(\bA_k^S)
\]
is surjective.  This illustrates that the local contribution can be genuinely
collective: two places can cancel complementary parts of the obstruction even though neither
place is sufficient on its own.
\end{example}

\begin{corollary}
\label{cor:PGLn-localization}
Let $G=\PGL_n$ over $k$, and let $S$ contain a finite place.  Then
\[
 BG(k)\longrightarrow BG(\bA_k^S)
\]
is surjective.  In particular, plain strong approximation off $S$ holds for
$B\PGL_n$.
\end{corollary}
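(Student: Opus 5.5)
The quickest route is to observe that this is a restatement of Corollary~\ref{cor:PGLn-archimedean-and-finite}(i), which comes from Theorem~\ref{thm:PGLn-approximation-criterion}. Still, in keeping with the present section, I would give a second derivation from the Picard-group criterion of Proposition~\ref{prop:local-Picard-criterion}. Both derivations go as follows.

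First, compute $P_G=\Pic(\PGL_n)$. Since $\PGL_n$ is split with no nontrivial characters and $\pi_1(\PGL_n)=\bZ/n\bZ$ with trivial Galois action, Sansuc's sequence gives
\[
 \Pic(\PGL_n)\simeq\Hom(\mu_n,\Gm)\simeq\bZ/n\bZ,
\]
naturally in the base field. The same holds over every completion $k_v$. The restriction map $r_v:\Pic(G)\to\Pic(G_{k_v})$ is therefore the identity on $\bZ/n\bZ$ for every $v$, and in particular it is injective at any finite place $v_0\in S$.

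Next, apply the final assertion of Proposition~\ref{prop:local-Picard-criterion}. Injectivity of $r_{v_0}$ gives $K_S=0$, hence $E_S^{\mathrm{fin}}=P_G^\vee$. Theorem~\ref{thm:Borovoi-away-S-image} then gives surjectivity of $BG(k)\to BG(\bA_k^S)$. Finally, Theorem~\ref{thm:BG-adelic-discrete} says the adelic space is discrete, so surjectivity is the same as density of the diagonal image. That is plain strong approximation off $S$.

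The only step needing real care is the identification of $r_v$. If I want to avoid invoking Sansuc's naturality, I will use the more elementary check through Theorem~\ref{thm:PGLn-approximation-criterion}. At a finite place $v_0$, $\inv_{v_0}(\Br(k_{v_0})[n])=\frac1n\bZ/\bZ$ by local class field theory, so $I_S(n)=\frac1n\bZ/\bZ$ whenever $S$ contains a finite place. Criterion \eqref{eq:PGLn-surjectivity-criterion} then gives surjectivity directly. I would present this second argument as the proof and mention the Picard-theoretic one as a consistency check: $E_{v_0}=P_G^\vee\simeq\frac1n\bZ/\bZ$, matching the local invariants of degree-$n$ algebras.
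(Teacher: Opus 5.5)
Your proposal is correct. The Picard-theoretic derivation you sketch first is essentially the paper's proof. The paper cites \cite[Proposition~2.5]{ColliotTheleneXu} for the identifications $\Pic(\PGL_n)\simeq\Pic(\PGL_{n,k_v})\simeq\bZ/n\bZ$ compatible with restriction. You instead use Sansuc's sequence for the isogeny $\SL_n\to\PGL_n$, which gives $\Pic(\PGL_n)\simeq X^*(\mu_n)(k)=\bZ/n\bZ$ functorially in the field. Either way $r_{v_0}$ is injective, so $K_S=0$. Proposition~\ref{prop:local-Picard-criterion} then gives surjectivity, and Theorem~\ref{thm:BG-adelic-discrete} turns surjectivity into density.

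The real difference is which argument you make official. You prefer the central-simple-algebra route through $I_S(n)$, which is literally the proof of Corollary~\ref{cor:PGLn-archimedean-and-finite}(i). The paper deliberately proves the statement by the Picard criterion and only remarks that this recovers that corollary.

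Your preferred route is more elementary. It needs only the Brauer reciprocity sequence and period $=$ index over number fields. It does not need Kottwitz's exact sequence, local Kottwitz duality, or the Colliot-Th\'el\`ene--Xu compatibility of transgression with evaluation, which together underlie $E_v=\Ann(\ker r_v)$. The cost is that the corollary becomes a duplicate of an earlier result.

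The paper's route is the reason the statement sits in Section~\ref{sec:classifying}. It exhibits $B\PGL_n$ as an instance of the general sufficient condition $K_S=0$, which applies to groups whose $H^1$ has no algebra-theoretic interpretation. Your consistency check $E_{v_0}=P_G^\vee$ is exactly the bridge showing that the two pictures agree.
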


\begin{proof}
For every finite $v$, \cite[Proposition~2.5]{ColliotTheleneXu} identifies both
$\Pic(\PGL_n)$ and $\Pic(\PGL_{n,k_v})$ with $\bZ/n\bZ$, compatibly with
restriction.  Thus $r_v$ is injective, and
Proposition~\ref{prop:local-Picard-criterion} gives the asserted surjectivity.
Discreteness of the adelic space gives plain strong approximation.  This also
recovers Corollary~\ref{cor:PGLn-archimedean-and-finite}(i).
\end{proof}

\begin{example}
\label{ex:norm-one-local-correction}
Let
\[
 K=\bQ(\sqrt2),
 \qquad
 T=\Res^1_{K/\bQ}\Gm.
\]
For $S_0=\{\infty,7\}$, Proposition~\ref{prop:unramified-counterexamples}
shows that strong approximation fails.  The reason is local: both places in
$S_0$ split in $K$, so
\[
 H^1(\bR,T)=1,
 \qquad
 H^1(\bQ_7,T)=1,
\]
and therefore $E_{S_0}=0$.  On the other hand,
\[
 P_T\simeq H^1(\bQ,\widehat T)\simeq\bZ/2\bZ,
 \qquad
 U_T=0,
\]
so $\Ann(U_T)=P_T^\vee\simeq\bZ/2\bZ$; the split place $7$ cannot cancel the
non-zero reciprocity obstruction.

Now put
\[
 S=\{\infty,3,7\}.
\]
The prime $3$ is inert in $K$, so its decomposition group is all of
$\Gamma=\Gal(K/\bQ)$.  Restriction therefore identifies
\[
 P_T=H^1(\Gamma,\widehat T)
 \xrightarrow{\sim}
 \Pic(T_{\bQ_3}),
\]
and local Kottwitz duality gives
\[
 E_3=P_T^\vee.
\]
Thus $E_S=P_T^\vee=\Ann(U_T)$, and
Theorem~\ref{thm:S-local-criterion} yields a surjection
\[
 BT(\bQ)\longrightarrow BT(\bA_\bQ^S).
\]
Adding the inert prime $3$ therefore repairs the failed approximation theorem:
it supplies exactly the local class needed to cancel the global Hilbert-symbol
obstruction.
\end{example}

\begin{remark}
The proof of Theorem~\ref{thm:S-local-criterion} is deliberately different from the
formal-lemma argument of Section~4.  Harari's formal lemma creates correcting
local points at new places outside a prescribed finite set.  For a connected
classifying stack, Theorem~\ref{thm:BG-adelic-discrete} makes each adelic point
away from $S$ isolated, so such a modification changes the point one is trying
to approximate.  The local criterion instead corrects the obstruction
entirely at places in $S$, leaving the prescribed away-from-$S$ family
unchanged; compare Remark~\ref{rem:formal-lemma-discrete-warning}.
\end{remark}

\noindent\textbf{Statement on use of AI.}
Artificial intelligence tools were used to assist with the initial typesetting and with the
initial exploration of some examples.  All mathematical statements and proofs
were subsequently checked and modified by the author.
All mathematical statements and proofs are the responsibility of the author.

\end{document}